%
%
%


\documentclass{cmip-l}




\usepackage{amssymb,amsmath,amsthm,amscd,mathrsfs,graphicx}
\usepackage[cmtip,all]{xy}
\usepackage{amscd}
\usepackage{url}


\theoremstyle{plain}
\newtheorem{theorem}{Theorem}[subsection]
\newtheorem{corollary}[theorem]{Corollary}

\newtheorem{lemma}[theorem]{Lemma}
\newtheorem{proposition}[theorem]{Proposition}
\theoremstyle{definition}
\newtheorem{definition}[theorem]{Definition}
\newtheorem{remark}[theorem]{Remark}
\newtheorem{example}[theorem]{Example}


\newcommand{\Hilb}{\operatorname{Hilb}}

\newcommand{\Quot}{\operatorname{Quot}}


\newcommand{\bbC}{\mathbf{C}}

\newcommand{\bbG}{\mathbf{G}}

\newcommand{\bbP}{\mathbf{P}}

\newcommand{\bbZ}{\mathbf{Z}}


\newcommand{\calF}{{ \mathcal F}}

\newcommand{\calK}{{ \mathcal K}}
\newcommand{\calL}{{ \mathcal L}}
\newcommand{\calM}{{ \mathcal M}}

\newcommand{\calO}{{ \mathcal O}}

\newcommand{\Hom}{\operatorname{Hom}}
\newcommand{\Ext}{\operatorname{Ext}}

\newcommand{\ShHom}{\underline{\operatorname{Hom}}}
\newcommand{\ShExt}{\underline{\operatorname{Ext}}}

\newcommand{\Spec}{\operatorname{Spec}}
\begin{document}

\title{Singular curves and their compactified Jacobians}


\author{Jesse Leo Kass}
\address{Institut f\"{u}r Algebraische Geometrie, Leibniz Universit\"{a}t Hannover, Welfengarten 1, D-30167 Hannover, Germany}
\email{kass@math.harvard.edu}
\thanks{}


\subjclass[2010]{(Primary) 14K30;  (Secondary) 14H20, 14C20.}

\date{}

\begin{abstract}
We survey the theory of the compactified Jacobian associated to a singular curve.  We focus on describing low genus examples using the Abel map.
\end{abstract}

\maketitle

In this article we study how to assign a degenerate Jacobian, called a compactified Jacobian, to a singular curve.  The title of this article is intended to recall Mumford's book ``Curves and their Jacobians" \cite{mumford75}.  That book  contains a beautiful introduction to the geometric theory of the Jacobian variety associated to a smooth curve, and the present article is intended to be a survey of the analogous theory for a singular curve, written in a similar spirit.  The focus is on providing examples and indicating various technical issues. We omit many proofs and instead direct the interested reader to the literature.

The main goal of this article is to show how the Abel map can be used to describe the compactified Jacobian $\bar{J}_{X}^{d}$ associated to a singular curve $X$.  One novel feature of this article is that we link the theory of the Abel map to the theory of linear systems of generalized divisors.  Such a link is certainly well-known to experts (see \cite[Rmk.~1.6.5]{hartshorne86}), but we develop the relation in greater depth than has previously been done in the literature.  We also discuss several examples that have not appeared in the literature before.  The most interesting example is the compactified Jacobian of a genus $2$ non-Gorenstein curve, which is studied throughout the paper but especially in Example~\ref{Example: AbelForSingularGenus2}.

Most of the results in this paper are not due to the author.  Many mathematicians have contributed to the body of work discussed, but the  author owes a particularly large mathematical debt to Allen Altman, Steve Kleiman, and Robin Hartshorne.  The theory of generalized divisors was developed by Hartshorne in \cite{hartshorne86}, \cite{hartshorne94}, and \cite{hartshorne07}, and the Abel map that we study here was constructed by Altman--Kleiman in \cite{altman80}. Kleiman's article \cite{kleiman05} was also very helpful in writing Section~\ref{Sect: Jacobian}.

The reader who has looked below at the ``Conventions" section may have noticed that in this paper the term ``curve" always refers to an irreducible and reduced curve.  It is possible to assign a compactified Jacobian $\bar{J}_{X}^{d}$ to, say, a reducible curve $X$, but both the definition of $\bar{J}_{X}^{d}$ and the associated Abel map then become more complicated.  One major barrier to constructing an Abel map for a reducible curve is that the theory of linear systems on a reducible curve has undesirable properties, which are discussed in \cite[Rmk.~2.9]{hartshorne07}.  There is, however, a growing body of work (e.g.~\cite{caporaso07, coelho08, coelho10}) on constructing an Abel map for a reducible but reduced curve.  The papers just cited also provide a guide to the literature on compactified Jacobians of reducible curves.  Much less is known about assigning a compactified Jacobian to a non-reduced curve, and in particular, there seems to be no literature on constructing an Abel map for such a curve.

\subsection{Organization}
This paper is organized as follows.  In Section~\ref{Sect: Jacobian} we recall some basic facts about the Jacobian variety associated to a smooth curve.  The material developed there is our model for the theory of the compactified Jacobian associated to a singular curve.  We begin discussing singular curves in Section~\ref{Sec: GeneralizedJac}, where we define the generalized Jacobian of a singular curve.  This variety typically fails to be proper, and so we are naturally led to compactify the generalized Jacobian.  This is done in Section~\ref{Sect: CompJac}.  At the end of that section we show by example that the most naive approach to constructing an Abel map into the compactified Jacobian fails.  The rest of the article is devoted to constructing a suitable Abel map.  We recall the theory of generalized divisors in Section~\ref{Section: GeneralizedDivisors} and then we apply that theory to construct the Abel map in Section~\ref{Section: Abel Map}.  Finally, Section~\ref{Section: Dualizing} is an appendix that contains some facts about the dualizing sheaf that are used in this article.

\subsection{Personal Remarks}
A few personal words about this article.  I wrote this article for the proceedings for the conference ``The View From Joe's Office," held in honor of Joe Harris' 60th birthday.  Joe was my adviser in graduate school, and I hope that this article demonstrates Joe's influence on me as a mathematician.  

During my last year in graduate school, William Fulton told me about working with Joe when Joe first moved to Brown University.  Fall semester that year Joe taught a topics course on Brill--Noether theory.  After reviewing the necessary definitions, Joe begin by working  out the theory of special divisors on a genus $2$ curve.  The next day of class was spent on genus $3$ curves, which took up one lecture, and Joe then moved on to genus $4$ curves, which took a bit more class time.  Fulton said that he expected Joe to soon run out of examples and then state and prove the general Brill--Noether Theorem.  

Joe continued doing examples until the Thanksgiving break.

Right before the break, he was discussing curves whose genera was in the double digits.  When he returned from break, he apologized and explained that he had run out of examples. It was only then that he stated and proved the Brill--Noether Theorem.  Fulton cited this as an example of Joe's excellent mathematical taste: explicitly working through such a large class of curves provided incredible insight into Brill--Noether theory, insight that is not conveyed by just proving general theorems.

In the present article, we will certainly not get to singular curves with double digit genera, but I hope the choice of material shows the influence of Joe's good taste.  We will work out examples of compactified Jacobians associated to curves of low genus, and the selection of examples was influenced by \cite{ACGH},  a book that Joe co-authored.  Indeed, the examples in Section~\ref{Sect: Jacobian} are all answers to exercises in \cite{ACGH}, and the examples studied later are chosen as examples analogous to the ones from Section~\ref{Sect: Jacobian}.

\section*{Conventions}
\begin{itemize}
	\item The letter $k$ denotes an algebraically closed field.
	
	\item If $V$ is a $k$-vector space, then $\bbP V$ is the Grassmannian of $1$-dimensional quotients of $V$.

	\item If $T$ is a $k$-scheme, then we write $X_{T}$ for $X \times_{k} T$.

	\item If $T$ and $X$ are $k$-schemes, then a \textbf{$T$-valued point} of $X$ is a $k$-morphism $T \to X$.

	\item A \textbf{variety} is a finite type, separated, and integral $k$-scheme.

	\item A \textbf{curve} is a finite type, separated, integral, and projective $k$-scheme of pure dimension $1$.
	\item The \textbf{genus} $g$ of a curve $X$ is $g:=1-\chi(\calO_{X})$.
	\item The symbol $k(X)$ denotes the \textbf{field of fractions} (or field of rational functions) of a curve $X$.
	\item The symbol $\calK$ denotes the locally constant sheaf associated to $k(X)$.
	\item The symbol $\calK_{\omega}$ denotes the locally constant sheaf of rational $1$-forms.
	\item If $X$ is a $k$-scheme and $F$, $G$ are two $\calO_{X}$-modules, then we write $\ShHom(F,G)$ for the sheaf of homomorphisms from $F$ to $G$.
\end{itemize}

\section{The Jacobian} \label{Sect: Jacobian}
Here we discuss the Jacobian variety $J_{X}^{d}$ of a smooth curve $X$.  We begin by discussing three different approaches to constructing $J_{X}^{d}$.  Of these approaches, one uses the Abel map, and we also review the properites of this map.  After recalling the definition, we conclude this section by describing the Abel map of a curve of genus at most $4$.  In the subsequent sections of this article we will work to extend results of this section from smooth curves to singular curves.

The most succinct definition of the Jacobian $J^{0}_{X}$ of a smooth curve $X$ requires us to assume that the ground field $k$ is the field of complex numbers $k = \bbC$.  If $X$ is a smooth projective complex curve, then the associated Jacobian is the complex torus
\begin{displaymath}
	J^{0}_{X} := H^{1}(X_{\text{an}}, \calO_{X})/H^{1}(X_{\text{an}}, \underline{\bbZ}).
\end{displaymath}
We write $X_{\text{an}}$ for the space $X$ with the analytic or classical topology (rather than the Zariski topology).  The group $H^{1}(X_{\text{an}}, \underline{\bbZ})$ is considered as a subgroup of $H^{1}(X_{\text{an}}, \calO_{X})$ using the map induced by the natural inclusion $\underline{\bbZ} \stackrel{2 \pi i }{\longrightarrow}  \calO_{X}$.  This inclusion is part of a short exact sequence 
\begin{equation} \label{Eqn: LineBundleAsCoSES }
	0 \to  \underline{\bbZ} \to \calO_{X} \to \calO_{X}^{\ast} \to 0,
\end{equation}
which provides us with an alternative description of $J_{X}^{0}$.  An inspection of the associated long exact sequence shows that there is a canonical isomorphism
\begin{equation} \label{Eqn: LineBundlesAsCoh}
	J_{X}^{0} \cong \ker( H^{1}(X_{\text{an}}, \calO_{X}^{\ast}) \stackrel{c_1}{\longrightarrow} H^{2}(X_{\text{an}}, \underline{\bbZ})),
\end{equation} 
where $c_{1}$ is the 1st Chern class map.

The group $H^{1}(X_{\text{an}}, \calO_{X}^{\ast})$ is canonically isomorphic to the set of isomorphism classes of line bundles, and under this isomorphism, the 1st Chern class map corresponds to the degree map, so the points of $J_{X}^{0}$ are in natural bijection with the degree $0$ line bundles on $X$.  

This description suggests a way of defining $J_{X}^{0}$ over an arbitrary ground field: $J_{X}^{0}$ is the moduli space of degree $0$ line bundles. In slightly more generality, if $d$ is any integer, then we define the degree $d$ Jacobian, or the moduli space of line bundles of degree $d$, as follows.
\begin{definition} \label{Def: Jacobian}
	The \textbf{Jacobian functor} $J_{X}^{d,\sharp} \colon $k$\text{-Sch} \to \text{Set}$ of degree $d$ is the \'{e}tale sheaf associated to the functor that assigns to a $k$-scheme $T$ the set of isomorphism classes of line bundles $L$ on $X_{T}$ that have the property that the restriction of $L$ to every fiber of $X_{T} \to T$ has degree $d$.  The \textbf{Jacobian variety} $J^{d}_{X}$ of degree $d$  is the $k$-scheme $J_{X}$ that represents $J^{d, \sharp}_{X}$.
\end{definition}
Our definition of $J^{d, \sharp}_{X}$ is somewhat unsatisfactory.  Because $J^{d, \sharp}_{X}$ is the sheafification of the functor parameterizing isomorphism classes of line bundles and not the functor itself, it is not immediately clear what the functor  $J^{d, \sharp}_{X}$ parametrizes.  For example, a line bundle $L$ on $X_{T}$ defines a $T$-valued point of $J^{d, \sharp}_{X}$ (for a suitable $d$, provided the degree of the restriction of $L$ to the fiber of $X_{T} \to T$ over $t \in T$ is constant as a function of $t$), but it is not immediate from the definition that  every $T$-valued point of $J^{d, \sharp}_{X}$ is defined by some $L$.  Similarly, it is unclear when two line bundles $L$ and $M$ on $X_{T}$ define the same $T$-valued point.

A very careful discussion of this topic can be found in \cite{kleiman05}.  In this article, the distinction between the functor parameterizing line bundles on $X$ and its associated sheaf will be largely irrelevant for the following reason: when $T$ equals $\Spec(K)$ for $K$  an algebraically closed field, $J_{X}^{d, \sharp}(T)$ equals the set of isomorphisms classes of degree $d$ line bundles on $X_{T}$, which is what one should naively expect.  (This is \cite[Ex.~2.3]{kleiman05}.) 

In an arithmetic context, however, it is important to distinguish between the functor parameterizing line bundles and its sheafification because when $K$ fails to be algebraically closed there may be $K$-valued points of $J_{X}^{d, \sharp}$ that cannot be represented by line bundles.  (See \cite[Ex.~2.4]{kleiman05} for an example.)

In any case, to make use of Definition~\ref{Def: Jacobian}, we need to prove that $J_{X}^{d, \sharp}$ can be represented by a $k$-scheme.  In general, a standard approach to 
proving representability of a functor is to use a  theorem of Artin.   In \cite{artin74, artin69}, Artin gave  a  criteria for a functor to be representable by an algebraic space, and 
one approach to proving that a functor can be represented by a scheme is to first prove representability by an algebraic space by verifying Artin's criteria and then to prove that the resulting algebraic space is actually a scheme by some other means.

In the specific case of $J^{d, \sharp}_{X}$, this strategy was carried out by Artin in \cite{artin69}.  He first proved that $J^{d, \sharp}_{X}$ is representable by an algebraic space by verifying that the functor satisfies Artin's criteria (\cite[Thm.~7.3, p.~67]{artin69}; see also \cite[p.186-187]{artin74}).  Artin then proved that this algebraic space is a $k$-scheme by proving more generally that any torsor for a locally finite type group space over $k$ is actually a scheme  \cite[Lem.~4.2, p.~43]{artin69}.  

A second approach to proving that $J^{d, \sharp}_{X}$ is representable is to make use of the Abel map.  While this approach to proving representability is not as general as the first approach, it has the advantage of providing more insight into the geometry of $J^{d}_{X}$.  Let us review this construction, beginning with the definition of the symmetric power.

\begin{definition} \label{Def: SymmetricPower}
	The \textbf{symmetric product} $X^{(d)}$ is defined to be the quotient of the $d$-fold self-product $X \times \dots \times X$ of $X$ by the action of the symmetric group $\operatorname{Sym}_{d}$ given by permuting factors.
\end{definition}
Recall that the quotient of a quasi-projective variety $V$ by a finite group always exists as a quasi-projective variety (by, say, \cite[p.~66]{mumford70}).  Hence the symmetric power $X^{(d)}$ of a smooth curve is a projective variety of dimension $d$.  This symmetric power is also smooth, but this is more difficult to establish.  A proof of smoothness can be found in e.g.~\cite[Thm.~7.2.3]{fantechi05}.  The symmetric power of a smooth curve also has a moduli-theoretic interpretation.

\begin{definition} \label{Def: HilbFunctor}
	The \textbf{Hilbert functor} $\Hilb^{d, \sharp}_{X}$ of degree $d$ is defined by setting $\Hilb^{d, \sharp}(T)$ equal to the set of $T$-flat closed subschemes $Z \subset X_{T}$ with the property that every fiber of $Z \to T$ has degree $d$. 
\end{definition}

\begin{lemma} \label{Lemma: SymIsHilb}
	The $k$-scheme $X^{(d)}$ represents $\Hilb_{X}^{d, \sharp}$.
\end{lemma}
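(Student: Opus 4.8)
The plan is to exhibit on $X^{(d)}$ a universal degree-$d$ family, which by Yoneda produces a natural transformation $\eta\colon h_{X^{(d)}}\to\Hilb_X^{d,\sharp}$, and then to prove that $\eta$ is an isomorphism by reducing to a local normal form on $X$. First I would construct the family. On $X^{d}\times X$ form the divisor $\Delta=\sum_{i=1}^{d}\Gamma_{i}$, where $\Gamma_{i}$ is the graph of the $i$-th projection $X^{d}\to X$; since $X^{d}\times X$ is smooth, $\Delta$ is an effective Cartier divisor, finite and flat of degree $d$ over $X^{d}$. The group $\operatorname{Sym}_{d}$ acts through the first factor, permuting the $\Gamma_{i}$ and fixing $\Delta$. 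The quotient map $q\colon X^{d}\to X^{(d)}$ is finite and, by miracle flatness ($X^{d}$ is Cohen--Macaulay, $X^{(d)}$ is regular, and the fibres are finite), faithfully flat; hence $\Delta$ descends to a closed subscheme $\calZ\subset X^{(d)}\times X$, and the flatness of $\calZ\to X^{(d)}$ of degree $d$ may be verified after the faithfully flat base change $q\times\mathrm{id}$, under which $\calZ$ pulls back to $\Delta$. Thus $\calZ$ is an element of $\Hilb_X^{d,\sharp}(X^{(d)})$ and defines $\eta$.

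Next I would reduce the claim that $\eta$ is an isomorphism to a local question on $X$. Both $h_{X^{(d)}}$ and $\Hilb_X^{d,\sharp}$ are Zariski sheaves, since closed subschemes satisfy descent, so it suffices to check $\eta$ locally. Given a length-$d$ subscheme $Z_{0}\subset X$ with support $\{x_{1},\dots,x_{r}\}$ and local degrees $d_{j}$, I would choose pairwise disjoint affine opens $U_{j}\ni x_{j}$. A finite subscheme whose support is close to that of $Z_{0}$ then lies in $\coprod_{j}U_{j}$ and splits as the disjoint union of its traces on the $U_{j}$ (being a disjoint union is an open condition and the degrees are locally constant), so near $[Z_{0}]$ the functor $\Hilb_X^{d,\sharp}$ is the product $\prod_{j}\Hilb_{U_{j}}^{d_{j},\sharp}$; the symmetric product admits the parallel decomposition $\prod_{j}U_{j}^{(d_{j})}$ near $[\sum_{j}d_{j}x_{j}]$, and $\eta$ respects both. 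This reduces the claim to subschemes concentrated near a single point of $X$.

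For the concentrated case I would pass to a uniformizer $t$ at $x$ (so $\calO_{X,x}$ is a discrete valuation ring) and show that a $T$-flat degree-$d$ subscheme concentrated near $x$ is cut out by a unique monic polynomial $t^{d}+a_{1}t^{d-1}+\cdots+a_{d}$ with $a_{i}\in\calO_{T}$: the structure sheaf of the subscheme is locally free of rank $d$ over $\calO_{T}$, multiplication by $t$ has a monic characteristic polynomial of degree $d$, and this polynomial generates the ideal by Cayley--Hamilton (a surjection of locally free modules of equal rank being an isomorphism). This identifies the local Hilbert functor with the coefficient space $\bbA^{d}$, matching the description of the symmetric product through the fundamental theorem on symmetric functions, $k[t_{1},\dots,t_{d}]^{\operatorname{Sym}_{d}}=k[e_{1},\dots,e_{d}]$, under which the universal subscheme is the vanishing of $\prod_{i}(t-t_{i})=t^{d}-e_{1}t^{d-1}+\cdots$. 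Hence $\eta$ is an isomorphism.

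The main obstacle is the decomposition step. One must argue rigorously that, in a family over an arbitrary base $T$, the Hilbert functor factors according to the fibrewise partition of the support, which amounts to controlling the local structure of $\Hilb_X^{d,\sharp}$ at a non-reduced point and to identifying a flat family of finite subschemes of the smooth curve $X$ with an effective relative Cartier divisor; this last identification rests on the fact that the local rings of $X$ are discrete valuation rings, so every ideal is principal. Once the family is known to be locally principal, the monic-polynomial normal form and the comparison with symmetric functions are routine.
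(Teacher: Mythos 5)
Your proposal is correct in outline, but it takes a genuinely different route from the paper's proof. The paper argues \emph{conditionally}: it assumes at the outset that $\Hilb^{d,\sharp}_{X}$ is representable by some scheme $\Hilb^{d}_{X}$ (Grothendieck's theorem, cited elsewhere in the paper as \cite[Thm.~2.6]{altman80}), constructs the comparison map by observing that the $\operatorname{Sym}_{d}$-invariant classifying map $X \times \dots \times X \to \Hilb^{d}_{X}$ factors through the quotient $X^{(d)}$, and then concludes that $X^{(d)} \to \Hilb^{d}_{X}$ is an isomorphism via Zariski's Main Theorem, using that the map is birational, injective on $K$-points, and that both source and target are smooth and proper. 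You never leave the functorial setting: you descend the universal family itself from $X^{d}$ to $X^{(d)}$ and then check directly, Zariski-locally, that the resulting transformation of functors is an isomorphism (support decomposition, then the monic-polynomial/Cayley--Hamilton normal form matched against elementary symmetric functions). What your route buys is an unconditional statement: you never invoke the existence, smoothness, or properness of the Hilbert scheme, so your argument proves representability rather than assuming it, and it exhibits the universal family explicitly. The price is carrying out the local analysis that the paper outsources to ZMT and to the cited properties of $\Hilb^{d}_{X}$; note that both arguments still need the (nontrivial) smoothness of $X^{(d)}$, which the paper cites and which you use as regularity in the miracle-flatness step.

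One step you should tighten. In your first paragraph, the inference ``$\Delta$ is $\operatorname{Sym}_{d}$-invariant and $q$ is faithfully flat, hence $\Delta$ descends'' is not a valid general principle: the action of $\operatorname{Sym}_{d}$ on $X^{d}$ is not free along the diagonals, so invariance under the group is strictly weaker than a descent datum --- the fiber product $X^{d} \times_{X^{(d)}} X^{d}$ is larger (scheme-theoretically) than the union of the graphs of the group elements, and equality of the two pullbacks must be checked on all of it. In your situation descent does hold, but the justification is precisely the computation in your third paragraph: locally the ideal of $\Delta$ is generated by the single \emph{invariant} element $\prod_{i}(t - t_{i}) = t^{d} - e_{1}t^{d-1} + \cdots$, so the descended ideal is visibly generated by its intersection with the invariant ring. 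In other words, the symmetric-function normal form is not merely the final verification; it is what legitimizes the construction of $\calZ$ in the first place, and the argument should be arranged to reflect that.
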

\begin{proof}
	We will prove this lemma under the additional assumption that $\Hilb^{d, \sharp}_{X}$ can be represented by some $k$-scheme $\Hilb^{d}_{X}$.  To begin, let us construct a natural transformation
	$X^{(d)} \to \Hilb_{X}^{d, \sharp}$.  It is enough to construct a $\operatorname{Sym}_{d}$-invariant transformation $X \times \dots \times X \to \Hilb^{d, \sharp}_{X}$, and we construct this second transformation by exhibiting the corresponding closed subscheme $Z \subset (X \times \dots \times X) \times X$.
	
	We define $Z$ to be the multi-diagonal that consists of tuples $(p_1, \dots, p_d, q)$ with $p_i = q$ for some $i$.  Working on the level of local rings, one can show that $Z \to X \times \dots \times X$ is flat and finite of degree $d$.  Because $Z$ is visibly $\operatorname{Sym}_{d}$-invariant, this subscheme induces the desired transformation $X^{(d)} \to \Hilb^{d, \sharp}_{X}$.  To complete the proof, we need to show that this transformation is an isomorphism.
	
	First, observe that $X^{(d)}(K) \to \Hilb^{d, \sharp}_{X}(K)$ is injective for any algebraically closed field $K$.  Indeed, this observation is just the statement that no two fibers of $X \times X^{(d)} \supset Z \to X^{(d)}$ are equal as subschemes, a statement that can be  verified by working affine locally and writing out the ideal of $Z$ in terms of symmetric polynomials.  (Show that the fiber of  $Z \to X^{(d)}$ over the point that is the image of $(p_1, \dots, p_{d})$ is a closed subscheme supported on $\{ p_{1}, \dots, p_{d} \}$ and then compute the  length of $\calO_{Z, p_i}$.)
	
	Now a similar explicit computation shows that $X^{(d)} \to \Hilb^{d}_{X}$	 is an isomorphism over the locus in $\Hilb^{d}_{X}$ parameterizing reduced subschemes, so the morphism is birational.  The morphism is also quasi-finite because we have just shown that it is injective on $K$-points.  Thus $X^{(d)} \to \Hilb^{d}_{X}$ is a quasi-finite, birational morphism.  Furthermore, both the target and the source of the morphism are smooth and proper.  Indeed, we made this observation about $X^{(d)}$ just after defining the variety, and one can verify that $\operatorname{Hilb}^{d}_{X}$ is smooth and proper by using the functorial characterization of these properties.   We can conclude by Zariski's Main Theorem that  $X^{(d)} \to \Hilb^{d}_{X}$ is an isomorphism.
\end{proof}
By the lemma, we can identify $X^{(d)}$ with $\Hilb^{d, \sharp}_{X}$ when $X$ is smooth. The symmetric power is still defined when $X$ is singular, but then $X^{(d)}$ may not represent the Hilbert functor.  In Section~\ref{Section: Abel Map} we will describe how the symmetric power of a singular curve is related to the Hilbert functor.

The closed subschemes $Z \subset X_{T}$ that correspond to elements of $\Hilb^{d}_{X}(T)$ are all effective relative Cartier divisors.  That is, locally the ideal of $Z \subset X_{T}$ is generated by a single element.  Indeed, this is a consequence of the proof of Lemma~\ref{Lemma: SymIsHilb}, but the fact can also be proven directly (see \cite[Lem.~9.3.4]{kleiman05}).  Linear equivalence classes of Cartier divisors are in natural bijection with isomorphism classes of line bundles, and the Abel map $A \colon X^{(d)} \to J^{d}_{X}$ is a manifestation of this relation.  

Before defining the Abel map, let us review the definition of a Cartier divisor from \cite[Chap.~2, Sect.~5]{hartshorne77}.  We define a \textbf{Cartier divisor} $D$ to be a global section of the quotient sheaf $\calK^{\ast}/\calO_{X}^{\ast}$, where $\calK^{\ast}$ is the locally constant sheaf of nonzero rational functions.  Because Cartier divisors are sections of an abelian sheaf, we can form the \textbf{minus} $-D$ of a Cartier divisor $D$ and the \textbf{sum} $D+E$ of $D$ and a second Cartier divisor $E$.  As a section of a quotient sheaf, $D$  can be represented by a collection $(f_i,U_i)_{i \in I}$ consisting of open subsets $\{ U_i \}_{i \in I}$ that cover $X$ and rational functions $\{ f_i \}_{i \in I}$ that have the property that $f_i f_j^{-1}$ is regular on $U_i \cap U_j$.  To this data we can associate a nonzero coherent subsheaf
\begin{displaymath}
	I_{D} \subset \calK,
\end{displaymath}
namely the subsheaf generated by $f_i$ on $U_i$.  

The reader may verify that the correspondence $D \mapsto I_{D}$ defines a bijection between the set of global sections of $\calK^{\ast}/\calO_{X}^{\ast}$ and the set of nonzero coherent subsheaves $I_{D} \subset \calK$ that are locally principal.  (In fact, nonzero subsheaves of $\calK$ are always locally principal; this can be proven using, say, the classification of f.g.~modules over a DVR.)  Thus we can think of Cartier divisors as nonzero subsheaves of $\calK$ rather than as global sections of $\calK^{\ast}/\calO_{X}^{\ast}$.  We will work with Cartier divisors as subsheaves in this article.

A Cartier divisor $D$ is said to be \textbf{effective} if we have $I_{D} \subset \calO_{X}$. In other words, an effective Cartier divisor is a $0$-dimensional closed subscheme.  A rational function $f$ defines a Cartier divisor, the Cartier divisor $\calO_{X} \cdot f \subset \calK$, and we denote this divisor by $\operatorname{div}(f)$.  We say that two Cartier divisors $D$ and $E$ are \textbf{linearly equivalent} if $D = E + \operatorname{div}(f)$ for some rational function $f$.  The set of all effective divisors linearly equivalent to a given divisor $D$ is denoted by $|D|$ and called the \textbf{complete linear system} associated to $D$.

The divisors of the form $\operatorname{div}(f)$ naturally form a subgroup of the group of all Cartier divisors. In fact, the rule $D \mapsto I_{D}$ defines a surjective group homomorphism from the group of Cartier divisors to the group of isomorphism classes of line bundles, and the kernel of this map is precisely the subgroup consisting of divisors of the form $\operatorname{div}(f)$.  Thus a linear equivalence class of Cartier divisors is the same thing as an isomorphism class of line bundles. In slightly different form, this is  \cite[Prop.~6.13]{hartshorne77}.   

The complete linear system $|D|$ associated to a Cartier divisor naturally has the structure of a projective space.  Indeed, consider the $\calO_{X}$-linear dual  $\calL(D) :=\ShHom(I_{D}, \calO_{X})$ of $I_{D}$.  The natural map  $H^{0}(X, \calL(D)) \to \Hom(I_{D}, \calO_{X})$ is an isomorphism, and the rule that sends a nonzero global section of $\calL(D)$ to the image of the corresponding homomorphism $I_{D} \to \calO_{X}$  defines a bijection $\bbP H^{0}(X, \calL(D)) \cong |D|$.  We use  $\calL(D)$ to define the Abel map.
\begin{definition}
	If $D \in \Hilb^{d, \sharp}_{X}(T)$ (for some $k$-scheme $T$), then we define $\calL(D) := \ShHom(I_{D}, \calO_{X_{T}})$.  The $d$-th \textbf{Abel map} $A \colon X^{(d)} \to J^{d}_{X}$ is defined by the rule	$D \mapsto \calL(D)$.
\end{definition} 
Because $I_{D}$ is a line bundle, the formation of $\calL(D)$ commutes with base change, so the Abel map is well-defined. 

By definition, the fiber of $A$ over $[L] \in J^{d}_{X}$ is the set of divisors $D$ satisfying $\calL(D) \cong L$.    In other words, $A^{-1}([L]) = \bbP H^{0}(X, L)$.  This equality holds on the level of schemes, though this is perhaps not clear from the work we have done so far.  A proof of scheme-theoretic equality and other technical results describing the structure of $A$ can be found in \cite[Sect.~9.3]{kleiman05} (esp. Thm.~9.3.13).  In any case, it follows from those structural results that $A$ fibers $X^{(d)}$ over $J^{d}_{X}$ by projective spaces of possibly varying dimension.

What dimensions can these projective spaces have?  The dimension $\dim |D|$ is controlled by the Riemann--Roch Formula.  Define a \textbf{canonical divisor} $K$ to be a divisor with the property that $\calL(K) = \omega$, the dualizing sheaf of $X$.  Given a Cartier divisor $D$, an \textbf{adjoint divisor} $\operatorname{adj} D$ is defined to be a Cartier divisor satisfying $\calL(\operatorname{adj} D)=\ShHom( \calL(D), \omega)$ or equivalently a divisor linearly equivalent to $K-D$.  The Riemann--Roch Formula relates $\dim |\operatorname{adj} D|$ to $\dim |D|$.  The formula states
\begin{displaymath}
	\dim |D| - \dim |\operatorname{adj} D| = d+1-g,
\end{displaymath}
where $d$ is the degree of $D$.  

To use this formula, we need information about $\operatorname{adj} D = K-D$. The canonical divisor $K$ has degree $2g-2$, so if $D$ has degree $d>2g-2$, then $\operatorname{adj} D$ has negative degree, which forces $\dim |\operatorname{adj} D| = -1$.  To study $|\operatorname{adj} D|$ for $d \le 2g-2$, we introduce the canonical map. 

Assume $g \ge 1$.  (The case $g=0$ can be dealt with separately.)  The canonical divisor $K$ of a curve of genus $g \ge 1$ is base-point free (by \cite[Prop.~5.1]{hartshorne77}), and so $K$ determines a morphism $X \to \bbP H^{0}(X, \calL(K))^{\vee}$ to projective space that we call the \textbf{canonical map}.  The image is a curve when $g \ge 2$, and we call this curve the \textbf{canonical curve}.  This curve is a curve in $\bbP^{g-1}$ as  $\dim H^{0}(X, \calL(K)) = g$.  In terms of the canonical map, the adjoint linear system $|\operatorname{adj} D|$ associated to an effective divisor $D$ can be described as the set of hyperplanes in $\bbP^{g-1}$ whose preimage contains $D$.  Using this description and the Riemann--Roch Formula, one can prove that 
\begin{equation} \label{Eqn: GeneralLinearSystem}
	\dim |D|  = 	\begin{cases}
					-1 & \text{if $d <g$} \\
					d-g& \text{if $d \ge g$}
				\end{cases}
\end{equation}
for a general effective divisor $D$ of degree $d$ and for every divisor when $d \ge 2g-1$.  We give the proof later in Section~\ref{Section: GeneralizedDivisors}, where we prove a more general statement as Corollary~\ref{Cor: GorensteinGenLinSystem}.

For the purpose of constructing $J^{d}_{X}$, the most important fact about Cartier divisors is that $\dim |D|=d-g$ for every divisor of degree $d>2g-2$.  In other words, for such a degree, the fibers of $A \colon X^{(d)} \to J^{d, \sharp}_{X}$ are all $\bbP^{d-g}$'s.  Given the existence of the Jacobian $J^{d}_{X}$, the stronger  technical results about $A$  in \cite[Sect.~9.3]{kleiman05}, which were mentioned earlier, show that $A \colon X^{(d)} \to J^{d}_{X}$ is a $\bbP^{d-g}$-bundle.  

If we do not assume that $J^{d}_{X}$ exists, then we can use the above fact about Cartier divisors to construct the scheme.  It is enough to construct $J^{d}_{X}$ for $d>>0$, and for such a $d$, we can define $J^{d}_{X}$ to be the quotient of  $X^{(d)}$ by the relation of linear equivalence.  For this to be a valid definition, however, we must show that the quotient of $X^{(d)}$ exists as a $k$-scheme.  

In general, the quotient of a $k$-scheme by an equivalence relation may not exist as a $k$-scheme, but linear equivalence is a particularly nice equivalence relation: the equivalence classes are smooth and projective subschemes of $X^{(d)}$.  More formally, the relation is smooth and projective in the sense that the graph $R \subset X^{(d)} \times X^{(d)}$ of the equivalence relation has the property that the two projection morphisms $R \to X^{(d)}$ are smooth and projective.  The quotient of a quasi-projective $k$-scheme by such an equivalence relation always exists, as can be shown using an elementary argument (that realizes the quotient as a subscheme of a suitable Hilbert scheme).  We direct the reader to \cite[Lem.~4.9]{kleiman05} for details about the construction of the quotient by a smooth and projective relation and to \cite[Thm.~4.8]{kleiman05} for the proof that the quotient of $X^{(d)}$ is $J^{d}_{X}$.  In any case, this gives a second construction of the Jacobian.

The analytic construction from the beginning of this section showed that not only does the Jacobian exist, but it has the structure of a $g$-dimensional complex torus.  We can now show that $J^{0}_{X}$ is the algebro-geometric analogue of a $g$-dimensional complex torus: $J^{0}_{X}$ is a $g$-dimensional abelian variety.  Recall this means $J^{0}_{X}$ is a smooth proper variety of dimension $g$ that has group scheme structure.  The group structure on $J^{0}_{X}$ comes from the tensor product, and one can deduce the remaining properties from the fact that $X^{(d)}$ is a smooth projective variety of dimension $d$ by using $A \colon X^{(d)} \to J^{d}_{X}$ for $d >> 0$.

So far we have just made use of the Abel maps $A \colon X^{(d)} \to J^{d}_{X}$ for $d$ sufficiently large, but the Abel maps for small values of $d$ are also of interest.  The Abel map of degree $d=g$ is particularly interesting because $X^{(g)} \to J^{g}_{X}$ is birational. We can thus describe the Jacobian $J^{g}_{X}$ in terms of the symmetric power $X^{(g)}$ and the exceptional locus of the Abel map $A \colon X^{(g)} \to J^{g}_{X}$.

The Abel map of degree $d=g-1$ is also distinguished.  
\begin{definition}
	The image of $A \colon X^{(g-1)} \to J_{X}^{g-1}$ is the \textbf{theta divisor} $\Theta$.
\end{definition}
The map $A \colon X^{(g-1)} \to J^{g-1}_{X}$ is birational onto its image, so $\Theta \subset J^{g-1}_{X}$ is a subscheme of codimension $1$, hence is a locally principal divisor that can be shown to be ample.   Historically, the geometric study of Jacobian varieties has been dominated by the study of their theta divisors.  We conclude this section by showing  how the geometry of $\Theta$ encodes the geometry of $J_{X}^{d}$ by describing the theta divisor for curves of genus at most $4$.  

Somewhat more generally, we will describe the Abel maps
\begin{gather*}
	A \colon X^{(g)} \to J^{g}_{X} \text{ and }\\
	A \colon X^{(g-1)} \to \Theta.
\end{gather*}
The locus $C^{1}_{d} \subset X^{(d)}$ of points $[D]$ satisfying $\dim |D| \ge 1$ is closed, and the restriction of the Abel map to the complement
$X^{(d)} \setminus C^{1}_{d}$ is an isomorphism onto its image.  We will describe the structure of the Abel map by describing the subset $C^{1}_{d} \subset J^{d}_{X}$ and the contraction $C^{1}_{d} \to J^{d}_{X}$.

We derive such a description using the Riemann--Roch Formula.  By that formula, an effective divisor $D$ of degree $d=g-1$ satisfies $\dim |D| \ge 1$ (i.e.~$[D] \in C_{g-1}^{1}$) if and only if $\dim |\operatorname{adj} D| \ge 1$ or equivalently there are two distinct canonical divisors that contain $D$.  Similarly, an effective divisor $D$ of degree $d=g$ satisfies $\dim |D| \ge 1$ if and only if $|\operatorname{adj} D| \ge 0$ or equivalently some canonical divisor contains $D$.  

These observations demonstrate the key role played by the  canonical divisor $K$  in describing $C^{1}_{d}$.  The canonical divisor of a low genus curve is well-understood.  Indeed, effective canonical divisors are the preimages of hyperplanes under the canonical map, and the canonical map of a curve of genus at most $4$ is computed in  \cite[Chap.~4, Sect.~6]{hartshorne77}.  We now describe the Jacobian of a low genus curve.

\begin{example}[Genus $1$] \label{Example: SmGenus1}
The canonical divisor of a genus $1$ curve $X$ is trivial.  In particular, $C^{1}_{g}=\emptyset$, and so the Abel map  $A \colon \Hilb^{1}_{X}=X \to J^{1}_{X}$ is an isomorphism.  Now $J^{1}_{X}$ does not have natural group structure, but $J^{0}_{X}$ does, so if we fix a point $p_0 \in X$, then we can identify $X = J^{1}_{X}$ with $J^{0}_{X}$ by tensoring with $\calO_{X}(-p_0)$, and this identification makes $X$ into a group with identity $p_0$.  In a different form, this group law is often introduced in a first course in algebraic geometry.  The map associated to the complete linear system $| 3 p_0 |$ embeds $X$ as a cubic curve in the plane, and the group law coming from the isomorphism $X \cong J^{0}_{X}$ is the group law that is defined using the tangent-chord construction (as in, say, \cite[p.~321]{hartshorne77}).  

The theta divisor $\Theta \subset J^{0}_{X}$ is not very interesting.  The only effective degree $0$ divisor is the empty divisor, so we have $\Theta = \{ [\calO_{X} ] \}$.  
\end{example}

\begin{example}[Genus $2$]  \label{Example: SmGenus2}

Every genus $2$ curve is hyperelliptic, and the effective canonical divisors $K$ are the fibers of the degree $2$ map $f \colon X \to \mathbb{P}^1$ to the line.  These fibers $K=f^{-1}(t)$ are exactly the effective divisors of degree $g=2$ that move in a positive dimensional linear system.  Indeed, if $[D] \in C^{1}_{g}$, then, as we observed earlier, $D$ is contained in a canonical divisor, and hence equal to a canonical divisor by degree considerations.  This classification shows that $C^{1}_{g}$ is a rational curve
\begin{displaymath}
	\bbP^1 = C^{1}_{g} \subset X^{(g)},
\end{displaymath}
and this curve is contracted to a point by the Abel map.

What about $\Theta \subset J^{g-1}_{X}$?  No degree $1=g-1$ divisor $D$ can satisfy  $\dim |D|  \ge 1$.  Indeed, again using the observations we made after stating the Riemann--Roch Formula, we see that any such divisor would be contained in two distinct fibers of $f$, which is absurd.  We can conclude that the Abel map $A \colon X^{(g-1)} = X \to J^{g-1}_{X}$ is injective, so 
the theta divisor is 
\begin{displaymath}
	X = \Theta \subset J^{g-1}_{X},
\end{displaymath}
an embedded copy of the curve.
\end{example}

\begin{example}[Genus $3$] \label{Example: SmGenus3}
The genus $3$ curves fall into two classes: the hyperelliptic curves and the non-hyperelliptic curves.  We will first consider the case of hyperelliptic curves, which are similar to the genus $2$ curves that we just discussed.

Let $X$ be a genus $3$ hyperelliptic curve with degree $2$ map $f \colon X \to \mathbb{P}^{1}$ to the line.  The effective canonical divisors of $X$ are the divisors of the form $K = f^{-1}(t_1) + f^{-1}(t_2)$ for $t_1, t_2 \in \mathbb{P}^{1}$.  From this description, we can conclude that the elements $[D]$ of $C^{1}_{g}$ are the divisors of the form $f^{-1}(t_0) + p_0$ for $t_0 \in \bbP^1$ and $p_0 \in X$.  Furthermore, these divisors satisfy
\begin{displaymath}
	\dim | f^{-1}(t_0) + p_0| = 1
\end{displaymath}
by the Riemann--Roch Formula (as there is a unique canonical divisor containing $f^{-1}(t_0)+p_0$).  Indeed, the divisors $f^{-1}(t)+p_0$ with $t \in \mathbb{P}^{1}$ exhaust the effective divisors linearly equivalent to $f^{-1}(t_0) + p_0$.

How does this classification of divisors translate into a description of $A \colon X^{(g)} \to J^{g}_{X}$?  The exceptional locus $C^{1}_{g}$ is isomorphic to the surface $X \times \bbP^1$, and the Abel map collapses this surface to the curve $X$ by projection onto the first factor.

What about the theta divisor $\Theta \subset J^{g-1}_{X}$?  The exceptional locus $C^{1}_{g-1}$ of $A \colon X^{(2)} \to \Theta$ is the locus of effective degree $g-1=2$ divisors $D$ that are contained in two distinct canonical divisors.  From our description of $K$, we see that these are exactly  the divisors of the form $f^{-1}(t)$, $t \in \bbP^1$, and all these divisors are linearly equivalent.  In other words, the exceptional locus is a rational curve
\begin{displaymath}
	\bbP^1 = C^1_{g-1} \subset X^{(g-1)}
\end{displaymath}
that is contracted to a point.  Set-theoretically, this is the same as the description of the degree $g$ Abel map $X^{(g)} \to J^{g}_{X}$ of a genus $2$ curve.  There is, however, an important difference: when $X$ is a genus $2$ curve, the image of the rational curve $\mathbb{P}^{1} \subset X^{(2)}$ is  a smooth point, but when $X$ is a genus $3$ hyperelliptic curve, the image is a singularity of $\Theta$.  This is a consequence of a general result --- the Riemann Singularity Theorem --- that computes the multiplicity of $\Theta$ as
\begin{displaymath}
	\operatorname{mult}_{[L]} \Theta = h^{0}(X,L).
\end{displaymath}
Two proofs of this result can be found in \cite[Chap.~6]{ACGH}.

What about the non-hyperelliptic curves?  The canonical map of a non-hyperelliptic curve $X$ of genus $3$ embeds $X$ as a degree $4$ plane curve $X \subset \mathbb{P}^{2}$, and the effective canonical divisors are just the divisors that are the intersection of $X$ with a line $\ell \subset \mathbb{P}^{2}$.  From this description of the canonical divisors, we see that $[D] \in C^1_{g}$ if and only if $D$ lies on a line $\ell$ (which is necessarily unique).  We can define a map $\pi \colon C^1_{g} \to X$ as follows.  Given $[D] \in C^1_g$, there is a unique line $\ell$ containing $D$, and we can write $\ell \cap X = D + q_0$ for some point $q_0$ of $X$.  We set $\pi([D])=q_0$.

The fiber $\pi^{-1}(q_0)$ of $\pi$ over a point is a $\bbP^1$, the projective line parameterizing lines $\ell \subset \bbP^2$ containing $q_0$.  In particular, we see that $C^{1}_{g}$ is a surface.  The Abel map $A \colon X^{(g)} \to J^{g}_{X}$ contracts the fibers of $\pi$, so the image of $C^{1}_{g}$ is a curve:
\begin{displaymath}
	X = A(C^{1}_{g}) \subset J^{g}_{X}.
\end{displaymath}

We now turn our attention to the theta divisor.  We have $C^1_{g-1} = \emptyset$.  Indeed, no effective degree $g-1=2$ divisor is contained in two distinct canonical divisors because two distinct lines meet in a single point.  In particular, the theta divisor is a smooth projective surface:
\begin{displaymath}
	X^{(g-1)} = \Theta \subset J^{g-1}_{X}.
\end{displaymath} 
This is a special case of Marten's Theorem.  That theorem states that if $X$ is a curve of genus $g$, $g \ge 3$, then we have
\begin{displaymath}
	\dim \Theta_{\text{sing}} = \begin{cases} 		
							g-3 & \text{if $X$ is hyperelliptic;}\\
							g-4	& \text{otherwise.}
						\end{cases}
\end{displaymath} 
This statement, along with various generalizations, is proven in \cite[Chap.~4, Sect.~5]{ACGH}.
\end{example}

\begin{example}[Genus $4$] \label{Example: SmGenus4}
The structure of the Abel map of a genus $4$ hyperelliptic curve is similar to the structure of the Abel map of a genus $3$ hyperelliptic curve, so we will only discuss the non-hyperelliptic case.  The canonical map of a non-hyperelliptic genus $4$ curve $X$ embeds the curve in space $X \subset \mathbb{P}^{3}$.  As a space curve, $X$ is the complete intersection of a (non-unique) cubic hypersurface and a (unique) quadric surface, which we denote by $Q$.  The quadric surface $Q$ is either smooth or a cone over a plane curve of degree $2$.  The shape of the quadric influences the structure of the Abel map, so we consider these two cases separately.

Suppose first that $Q$ is smooth.  Then the quadric $Q$ must be isomorphic to $\bbP^{1} \times \bbP^{1}$ embedded by the complete linear system $|\calO(1,1)|$.  The description of $C^{1}_{g}$ is similar to the description we gave for a non-hyperelliptic genus $3$ curve.  By the Riemann--Roch Formula, the exceptional locus $C^{1}_{g} \subset X^{(g)}$ of $A \colon X^{(g)} \to J^{g}_{X}$ is the locus of effective degree $4$ divisors that are contained in a hyperplane  $h \subset \bbP^{3}$, which is necessarily unique (for otherwise $D$ would be contained in a line that lies on $Q$ by degree considerations, but the intersection of a line lying on $Q$ with $X$ is a divisor of degree $3$).  We describe $C^{1}_{g}$ by constructing a map $\pi \colon C^{1}_{g} \to X^{(2)}$.  Given $[D] \in C^{1}_{g}$, let $h \subset \bbP^3$ be the unique hyperplane containing $D$.  The intersection $h \cap X$ is a degree $6$ effective divisor that we can write as $h \cap X = D + E$ for some effective divisor $E$ of degree $2$. We set $\pi([D]) = [E]$.  The fibers of the resulting map $\pi \colon C^{1}_{g} \to X^{(2)}$ are $\bbP^1$'s, so $C^{1}_{g}$ is a smooth $3$-fold.  The Abel map contracts $C^{1}_{g}$ to the surface 
$$
	X^{(2)} =A(C^{1}_{g}) \subset J^{g}_{X}
$$
by contracting each fiber $\pi^{-1}([E])$ to a point.  

What about the degree $3=g-1$ Abel map $A \colon X^{(g-1)} \to J^{g-1}_{X}$?  If $[D]  \in X^{(g-1)}$ satisfies $\dim |D| \ge 1$, then $D$ lies on two distinct hyperplane sections, and hence lies on their intersection which is a line $\ell \subset \bbP^3$.  We can construct divisors satisfying this condition by using the geometry of the quadric surface $Q$.  Given a line $\ell$ on the quadric surface, the intersection $D := \ell \cap X$ is a degree $3$ effective divisor, so $[D] \in C^{1}_{g-1}$.  In fact, these points exhaust $C^{1}_{g-1}$.  If $[D] \in C^{1}_{g-1}$, then the unique line $\ell$ containing $D$ must be contained in $Q$ because otherwise $Q \cap \ell$ would be a degree $2$ closed subscheme that contains the degree $3$ effective divisor $D$, which is absurd.  The lines on $Q$ consist of two $1$-dimensional linear systems (the lines $\{t\} \times \bbP^{1}$ and the lines $\bbP^{1} \times \{ t \}$).  We thus have
\begin{displaymath}
	\bbP^1 \cup \bbP^1 = C^{1}_{g-1}\subset X^{(3)}.
\end{displaymath}
Each curve is contracted to a point of $\Theta$ which is a singularity.  This shows that $\Theta$ is singular at exactly two points.

What about the case where the quadric $Q$ containing $X$ is singular? The structure of the degree $g$ Abel map $A \colon X^{(g)} \to J^{g}_{X}$ is as before; the map contracts a threefold $C^{1}_{g} \subset X^{(g)}$ that is a $\bbP^1$-bundle over $X^{(2)}$.  The structure of the degree $g-1$ Abel map $A \colon X^{(g-1)} \to \Theta$, however, is different.  The argument used in the previous case remains valid except now $Q$ contains only one $1$-dimensional linear system of lines.  Recall that $Q$ is the cone over a smooth plane quadric $Y \subset \bbP^3$.  The lines $\ell$ on $Q$ are the lines that join a point on $Y$ to the vertex of the cone.  Thus
\begin{displaymath}
	\bbP^1 = C^{1}_{g} \subset X^{(g-1)}.
\end{displaymath} 
This curve is contracted to the unique singularity of $\Theta$.  (Warning: We have only defined $C^{1}_{g}$ as a set, and the locus naturally has non-reduced scheme structure.)  
\end{example}

This completes our discussion of Jacobians of smooth curves. The remainder of the article is devoted to extending this theory of the Jacobian to singular curves.

\section{Generalized Jacobians of Singular Curves} \label{Sec: GeneralizedJac}
How should the Jacobian of a singular curve  be defined?  One approach is to simply repeat Definition~\ref{Def: Jacobian}, which is the definition of the Jacobian of a smooth curve.  

\begin{definition} \label{Def: GenJacobian}
	Given a curve $X$, the \textbf{generalized Jacobian functor} $J_{X}^{d, \sharp}$ of degree $d$ is defined to be the \'{e}tale sheaf  associated to the functor that assigns to a $k$-scheme $T$ the set of isomorphism classes of lines bundles $L$ on $X_{T}$ that have the property the restriction of $L$ to any fiber of $X_{T} \to T$ has degree $d$.  The \textbf{generalized Jacobian variety} is the $k$-scheme that represents $J^{d, \sharp}_{X}$.
\end{definition}
This generalized Jacobian can be constructed by, for example, using Artin's Criteria.  However, the generalized Jacobian has a major deficiency: it is not proper.   

Consider the generalized Jacobian of a genus $1$ curve $X$ with a node $p_0 \in X$.  We observed in Example~\ref{Example: SmGenus1} that the degree $1$ line bundles on a smooth genus $1$ curve are all of the form $\calL(p)$ for $p \in X$ a point, and this fact remains valid for $X$ provided we require that $p$ lies in the smooth locus.  Thus we have 
\begin{align*}
	J_{X}^{1}=&	X \setminus \{ \text{node} \} \\
		\cong&	\bbP^1 \setminus \{ 0, \infty \}.
\end{align*}
In particular, $J_{X}^{1}$ is not proper.  This suggests a question: how to compactify $J_{X}^{1}$ to a proper $k$-scheme?

We answer this question in Section~\ref{Sect: CompJac}.  Before studying compactifications of $J_{X}^{d}$, let's first examine the structure of the scheme.  The generalized Jacobian $J^{0}_{X}$ is a smooth connected $g$-dimensional quasi-projective variety that admits a  group scheme  structure coming from the tensor product.  This group structure can be described in terms of the singularities of $X$.

When $k=\bbC$, we described the group of $k$-points $J^{0}_{X}(k)$ of the Jacobian cohomologically as a kernel in Eq.~\eqref{Eqn: LineBundlesAsCoh}. In that displayed equation, the terms in the exact sequence were cohomology groups that we interpreted as sheaf cohomology computed with respect to the analytic (or classical)  topology and the curve $X$ was a smooth curve.  However, our computation remains valid if we let $X$ be a singular curve over an arbitrary $k$ provided we  replace the analytic topology with the \'{e}tale topology.  Let us explain this.

In what follows, we only use the very basic properties of the \'{e}tale topology, and the reader unfamiliar with this formalism is directed to  \cite[Sect.~8.1]{bosch:1990}.  We write $T_{\text{\'{e}t}}$ for the \'{e}tale site of a scheme.  Given a smooth curve $X$ over $k=\bbC$, the isomorphism Eq.~\eqref{Eqn: LineBundlesAsCoh} is constructed by first constructing an isomorphism between the group of line bundles of arbitrary degree and the group $H^{1}(X_{\text{an}}, \calO_{X}^{\ast})$ and then observing that this isomorphism identifies the degree map on line bundles with the Chern class map $c_{1} \colon H^{1}(X_{\text{an}}, \calO_{X}^{\ast}) \to H^{2}(X_{\text{an}}, \bbZ)$.  These facts remain valid when $X$ has singularities and $k=\bbC$.  If  $k$ is an arbitrary field, then $X_{\text{an}}$ does not make sense, but there is an isomorphism between the group of line bundles and the \'{e}tale cohomology group $H^{1}(X_{\text{\'{e}t}}, \calO_{X}^{\ast})$ that identifies the degree map with a map valued in $H^{2}(X_{\text{\'{e}t}}, \bbZ_{\ell}(1))$ (for $\ell$ a prime distinct from the characteristic).  More generally, if we define $\operatorname{Pic}^{\sharp}_{X/k}$ to be the disjoint union of all the $J^{d,\sharp}_{X}$'s, then this functor is isomorphic to the 1st higher direct image of $\calO_{X}^{\ast}$ under the structure morphism $X_{\text{\'{e}t}} \to k_{\text{\'{e}t}}$.  This result is due to Grothendieck, and a recent discussion of the identification can be found in \cite[Sect.~2]{kleiman05}.  

The reason for introducing this cohomological formalism is that it makes it easy to relate the generalized Jacobian $J^{0}_{X}$ of $X$ to the Jacobian $J^{0}_{X^{\nu}}$ of the normalization $X^{\nu}$.  Let $\nu \colon X^{\nu} \to X$ be the normalization map.  The homomorphism $\nu^{-1} \calO_{X}^{\ast} \to \calO_{X^{\nu}}$ given by pulling back by $\nu$ is adjoint to a homomorphism $\calO_{X}^{\ast} \to \nu_{*}(\calO_{X^{\nu}}^{\ast})$ that is injective and an isomorphism away from the singular locus.  The cokernel $\calF$ is supported on $X_{\text{sing}}$ and fits into a short exact sequence 
\begin{equation} \label{Eqn: ShortComputePic}
	0 \to \calO_{X}^{\ast} \to \nu_{*}(\calO_{X^{\nu}}^{\ast}) \to \calF \to 0.
\end{equation}

Consider the associated long exact sequence relating the higher direct images of these sheaves under $X_{\text{\'{e}t}} \to k_{\text{\'{e}t}}$.  We just explained that the 1st direct image of $\calO_{X}^{\ast}$ is $\operatorname{Pic}^{\sharp}_{X/k}$.  Similarly, the 1st direct image of $\nu_{*} \calO_{X}^{\ast}$ is $\operatorname{Pic}^{\sharp}_{X^{\nu}/k}$ as $\nu$ is finite.  The natural map $\operatorname{Pic}^{\sharp}_{X/k} \to \operatorname{Pic}^{\sharp}_{X^{\nu}/k}$ is surjective as the cokernel injects into the 1st direct image of $\calF$, which is zero as $\calF$ has $0$-dimensional support.  For the same reason, the direct image of $\calF$ is the locally constant sheaf $F$ associated to the group $H^{0}(X_{\text{\'{e}t}}, \calF)$. The connecting homomorphism $F \to \operatorname{Pic}_{X/k}^{\sharp}$ is injective as $\calO_{X}^{\ast} \to \nu_{*} \calO_{X^{\nu}}^{\ast}$ is easily seen to induce an isomorphism on direct images (the only global functions on $X$, $X^{\nu}$ are constants).  To summarize, we can extract from the long exact sequence on higher direct images a short exact sequence
\begin{equation} 
	0 \to F  \to  \operatorname{Pic}^{\sharp}(X/k) \to \operatorname{Pic}^{\sharp}(X^{\nu}/k) \to 0.
\end{equation}
As $\operatorname{Pic}^{\sharp}(X/k) \to \operatorname{Pic}^{\sharp}(X^{\nu}/k)$ respects degree maps, we can also write
\begin{equation} \label{Eqn: LongComputePic}
	0 \to F  \to  J^{0}_{X} \to J^{0}_{X^{\nu}} \to 0.
\end{equation}
This is the desired description of the generalized Jacobian $J_{X}^{0}$: it is an extension of the abelian variety $J^{0}_{X^{\nu}}$ by the commutative group variety $F$  Let us examine the structure of $F$.

Label the singularities of $X$ as $q_1, \dots, q_n$ and then label the points of the fiber $\nu^{-1}(q_i)$ as $p_{i,j}$, $j=1, \dots, m_{i}$.  We defined $F$ to be the locally constant \'{e}tale sheaf associated to $H^{0}(X_{\text{\'{e}t}}, \calF)$, and this group of sections is isomorphic to the direct sum of the stalks of  $\calF$:
\begin{equation} \label{Eqn: StrOfKernel}
	H^{0}(X_{\text{\'{e}t}}, \calF) = \bigoplus (\calO_{X, p_{i,1} }^{\ast} \oplus \dots \oplus \calO_{X, p_{i,m_{i}}}^{\ast})/\calO_{X,q_i}^{\ast}.
\end{equation}
Here the stalks are taken with respect to the \'{e}tale topology, and the group $\calO^{\ast}_{X, q_i}$ is embedded diagonally.

The quotients appearing in Eq.~\eqref{Eqn: StrOfKernel} are unchanged if we replace the unit groups $\calO^{\ast}_{X, q_i}$ and $\calO^{\ast}_{X^{\nu}, p_{i,j}}$ with the unit groups
$\widehat{\calO}^{\ast}_{X, q_i}$ and $\widehat{\calO}^{\ast}_{X^{\nu}, p_{i,j}}$ in the appropriate completions.  In particular, the structure of $F$ depends only on the analytic type of the singularities of $X$.  Let's now compute $F$ for some specific singularities.

\begin{example}[The Node]
The node $\calO = k[[x,y]]/(xy)$ has normalization isomorphic to the product $\widetilde{\calO} = k[t]] \times k[[t]]$.  An isomorphism 
\begin{displaymath}
	\widetilde{\calO}^{\ast}/\calO^{\ast} \cong k^{\ast}
\end{displaymath}
is given by the rule $(f,g) \in \widetilde{\calO}^{\ast} \mapsto f(0)/g(0)$.  In other words, the associated algebraic group is the multiplicative group $\bbG_{m}$.  More generally, if $X$
is a nodal curve, then $F = \ker(J^{0}_{X} \to J^{0}_{X^{\nu}})$ is a multiplicative torus $\bbG_{m}^{\delta}$ of dimension equal to the number of nodes $\delta$.
\end{example}

\begin{example}[The Cusp]
If $\calO = k[[x,y]]/(y^2-x^3)$ is the cusp, then the quotient $\widetilde{\calO}^{\ast}/\calO^{\ast}$ is isomorphic to the additive group $k^{+}$.  In particular, if $X$ is a curve with only cusps as singularities, then $F$ is an additive torus $\bbG_{a}^{\delta}$ of dimension equal to the number of cusps $\delta$.
\end{example}

\begin{example}[The Tacnode]
Like the node, the normalization of the tacnode $\calO = k[[x,y]]/(y^2-x^4)$ is the product $\widetilde{\calO} = k[[t]] \times k[[t]]$ of two power series rings.  An isomorphism $\widetilde{\calO}^{\ast}/\calO^{\ast} \cong k^{\ast} \oplus k^{+}$ is given by the rule $(f,g) \mapsto (f(0)/g(0), f'(0)/f(0) - g'(0)/g(0))$.  Here $f'(t)$ and $g'(t)$ are the formal derivatives.  In particular, each tacnode of $X$ contributes a factor of $\bbG_{m} \times \bbG_{a}$ to $F$.
\end{example}

This concludes our discussion of examples.  A  more detailed description of $F$ can be found in  \cite[Sect.~9.1]{bosch:1990}.  
Rather than discussing that description, we turn our attention to the problem of compactifying the generalized Jacobian.

\section{Compactified Jacobians of Singular Curves} \label{Sect: CompJac}
In the special case of the genus $1$ nodal curve $X$ from the beginning Section~\ref{Sec: GeneralizedJac}, it is  clear how to compactify the associated generalized Jacobian $J_{X}^{1}$.
We described that generalized Jacobian as
\begin{equation} \label{Eqn: Genus1GenJac}
	J_{X}^{1} \cong X \setminus \{ \text{node} \},
\end{equation}
and we can construct a compactification $\bar{J}_{X}^{1} \supset J_{X}^{1}$ by defining
\begin{equation} \label{Eqn: Genus1CpctJac}
	\bar{J}_{X}^{1} \cong X.
\end{equation}
In this section, we will describe how to interpret this compactification $\bar{J}_{X}^{1}$ in terms of moduli and then how to generalize that interpretation to arbitrary curves.  

We can give a moduli-theoretic interpretation of the compactification \eqref{Eqn: Genus1CpctJac} as follows.  Let $p_0 \in X$ be the node.  In the isomorphism \eqref{Eqn: Genus1GenJac}, a point $p \in X \setminus \{ p_0 \}$ corresponds to $[\calL(p)] \in J^{1}_{X}$, where $\calL(p)$ is the $\calO_{X}$-linear dual of the ideal $I_{p}$ of $p$.  When $p=p_0$, the ideal $I_{p_0}$ is not a line bundle, but we can still define $\calL(p_0)$ by $\calL(p_0) := \ShHom(I_{p_0}, \calO_{X})$.  The compactification $\bar{J}_{X}^{1}$ parameterizes degree $1$ line bundles on $X$ and the sheaf $\calL(p_0)$.  Both the line bundles $\calL(p)$ and the sheaf $\calL(p_0)$ are examples of rank $1$, torsion-free sheaves, a class of sheaves that we now define.

\begin{definition}
	A coherent sheaf $I$ on a  curve $X$ is \textbf{torsion-free} if for every local  section $f$  of $\calO_{X}$ and every local section $s$ of $I$ satisfying
	$f \cdot s = 0$, we have $f=0$ or $s=0$.  The sheaf $I$ is said to be \textbf{rank $1$} if there exists a dense open subset $U \subset X$ such that $I|_{U}$ is isomorphic
	to $\calO_{U}$.
\end{definition}
The degree of a rank $1$, torsion-free sheaf is defined as follows.
\begin{definition}
	If $I$ is a rank $1$, torsion-free sheaf, then the \textbf{degree} $\deg(I)$ of $I$ is defined by
\begin{displaymath}
	\deg(I) := \chi(I)-\chi(\calO_{X}).
\end{displaymath}
\end{definition}

The compactified Jacobian is defined in the expected manner.
\begin{definition} \label{Def: CompactJacobian}
	The degree $d$ \textbf{compactified Jacobian functor} $\bar{J}_{X}^{d, \sharp}$ is defined to be the \'{e}tale sheaf associated to the functor that assigns to a $k$-scheme $T$ the set of isomorphism classes of $\calO_{T}$-flat, finitely presented $\calO_{X_T}$-modules $I$ on $X_{T}$ that have the property that the restriction of $I$ to any fiber of $X_{T} \to T$ is a rank $1$, torsion-free sheaf of degree $d$.  The degree $d$ \textbf{compactified Jacobian} is the $k$-scheme that represents $\bar{J}^{d, \sharp}_{X}$.
\end{definition}

By \cite[Thm.~8.1]{altman80}, the compactified Jacobian exists as a projective $k$-scheme.  Since every line bundle is a rank $1$, torsion-free sheaf, we have $J_{X}^{d} \subset \bar{J}^{d}_{X}$, and so $\bar{J}^{d}_{X}$ compactifies the generalized Jacobian $J^{d}_{X}$.  This compactification is particularly well behaved when $X$ has only planar singularities.  For such a curve, Altman--Iarrobino--Kleiman have proven that  the compactified Jacobian $\bar{J}_{X}^{d}$ is an irreducible variety that contains the generalized Jacobian as a dense open subset  \cite{Irred1977}.

The compactified Jacobian has undesirable properties when $X$ has a non-planar singularity.  In this case, Kleiman--Kleppe \cite{Kleppe81} have proven that $\bar{J}^{d}_{X}$ has at least two irreducible components.  The generalized Jacobian is contained in a single irreducible component, so the compactified Jacobian of a curve with a non-planar singularity has the undesirable property of having extra components.

The compactified Jacobian of a singular curve can be constructed using techniques similar to those used to construct the Jacobian of a smooth curve.  In Section~\ref{Sect: Jacobian}, we sketched two different proofs that the Jacobian of a non-singular curve exists. The first proof used Artin's Criteria to construct the Jacobian.  The author expects this proof can be modified to prove the existence of the compactified Jacobian of a singular curve (but additional care must be taken in showing that the relevant algebraic space is actually a scheme --- the compactified Jacobian is not a group scheme).

In any case, we are more interested in generalizing the second construction of the Jacobian.  In the second construction, given a smooth curve $X$, we fixed a sufficiently large integer $d$ and then constructed $J^{d}_{X}$ as the quotient of  the symmetric power $X^{(d)}$ by the relation of linear equivalence.  The quotient scheme exists essentially because the relevant equivalence classes are $\bbP^{d-g}$'s, and the  quotient map $X^{(d)} \to J^{d}_{X}$ is then the Abel map. 

In \cite{altman80},   Altmann--Kleiman construct the compactified Jacobian by extending the theory of linear equivalence and of  the Abel map to singular curves.  This extension is, however, non-trivial.  As we now demonstrate, the most naive approach to constructing the Abel map of a singular curve fails.

Given a singular curve $X$, let $X^{(d)}_{\text{sm}}$ denote the $d$-th symmetric power of the smooth locus of $X$.  Applied to $X^{(d)}_{\text{sm}}$, the construction from the proof of Lemma~\ref{Lemma: SymIsHilb} produces a regular map
\begin{displaymath}
	A \colon X^{(d)}_{\text{sm}} \to J^{d}_{X}
\end{displaymath}
or equivalently a rational map
\begin{displaymath}
	A \colon X^{(d)} \dashrightarrow \bar{J}^{d}_{X}.
\end{displaymath}
However, this second rational map may not be a regular map; the locus of indeterminacy may be non-empty.  We show this by example.

\begin{example} \label{Example: NaiveAbelBad}
	We exhibit a singular curve $X$ with the property that the map $A \colon X^{(2)} \dashrightarrow \bar{J}^{2}_{X}$ is not a regular map.  Let $X$ be a general genus $2$ curve that has a single node $p_0 \in X$.  That is, let $X$ be the projective curve that contains $U := \Spec(k[x,y]/(y^2 - x^2 (x-a_1) \dots (x-a_4) )$ (for some distinct, nonzero constants $a_1, \dots, a_4 \in k$) and is smooth away from $U$.  The maximal ideal $(x,y) \subset \calO_{U}$ corresponds to the node $p_0 \in X$.  The curve $X$ admits a degree $2$ morphism $\pi \colon X \to \bbP^1$ that extends the ring map $k[t] \rightarrow \calO_{U}$ defined by $t \mapsto x$.  We will show that $A \colon X^{(2)} \dashrightarrow \bar{J}^{2}_{X}$ is undefined at the point $[2 p_0] \in X^{(2)}$.

Our strategy is to construct two maps $f_1, f_2 \colon \Spec(k[[t]]) \to X^{(2)}$ out of the formal disc with the property that the closed point $0 \in \Spec(k[[t]])$ is sent to $[2 p_0]$ and the generic point does not map into the locus of indeterminacy of $A$.  Since $A$ is well-defined at the image of the generic point, the composition $A \circ f_i \colon \Spec(\operatorname{Frac} k[[t]]) \to \bar{J}^{2}_{X}$ is defined and hence extends to a morphism $\Spec(k[[t]]) \to \bar{J}^{2}_{X}$ by the valuative criteria of properness.  Write $\widetilde{g}_i$ for this extension.  We will show directly that $\widetilde{g}_{1}(0) \neq \widetilde{g}_{2}(0)$.  However, if $A$ was a regular map, then we would have $\widetilde{g}_{1}(0) = A( [2 p_0] )$ and $\widetilde{g}_{2}(0) = A( [2 p_0] )$, which is absurd.

We now  construct the maps $f_1, f_2 \colon \Spec( k[[t]] ) \to X^{(2)}$ by exhibiting corresponding algebra maps $(\calO_{U} \otimes \calO_{U})^{\operatorname{Sym}_{2}} \to k[[t]]$. Set $\sqrt{ (t-a_1) \dots (t-a_4)}$ equal to the power series that is the usual Taylor series expression for the square root of $(t-a_1) \dots (t-a_4)$.  (Choose $\sqrt{a_i}$ for $i=1, \dots, 4$ arbitrarily.)  

The map $f_1$ is defined to be the regular map that corresponds to the algebra homomorphism $(\calO_{U} \otimes \calO_{U})^{\operatorname{Sym}_{2}} \to k[[t]]$ that is the restriction of the map $\calO_{U} \otimes \calO_{U} \to k[[t]]$ defined by 
\begin{gather*}
	x \otimes 1\mapsto 0, \\
	y \otimes 1\mapsto 0, \\
	1 \otimes x \mapsto t, \\
	1 \otimes y \mapsto t \sqrt{ (t-a_1) \dots (t-a_4)}.
\end{gather*}
Intuitively, this is the formal arc that sends a parameter $t$ to an unordered pair of points that consists of $p_0$ and a point in $X$ that tends to $p_0$ as $t$ tends to $0$.

Similarly, we define $f_2$ to be the regular map that corresponds to the algebra homomorphism defined by
\begin{gather*}
	x \otimes 1 \mapsto t, \\
	y \otimes 1 \mapsto t \sqrt{ (t-a_1) \dots (t-a_4)}, \\
	1 \otimes x \mapsto t, \\
	1 \otimes y \mapsto -t \sqrt{ (t-a_1) \dots (t-a_4)}. \\
\end{gather*}
Intuitively, this is the formal arc that sends the parameter value $t$ to the two points in $\pi^{-1}(t)$.

Both maps have the property that $0$ maps to $[2 p_2]$ and  the generic point maps to a point where $X^{(d)} \dashrightarrow \bar{J}^{2}_{X}$ is regular.  To complete this example, we need to show that $\widetilde{g}_{1}(0) \ne \widetilde{g}_{2}(0)$.  

By construction, the composition $\Spec( \operatorname{Frac} k[[t]] ) \stackrel{f_1}{\longrightarrow} X^{(2)} \stackrel{A}{\dashrightarrow}  \bar{J}^{2}_{X}$ corresponds to the line bundle on $X \otimes \operatorname{Frac} k[[t]]$ that is the pullback of $\calO(1)$ under $\pi \otimes 1 \colon X \otimes \operatorname{Frac} k[[t]] \to \bbP^{1} \otimes \operatorname{Frac} k[[t]]$.  This line bundle extends to the pullback of $\calO(1)$ under $\pi \otimes 1 \colon X \otimes k[[t]] \to \bbP^1 \otimes k[[t]]$, and so the extension $\widetilde{g}_1$ of $A \circ f_1$ must satisfy $\widetilde{g}_1(0)=[\pi^{*}(\calO(1))]$.  The composition $A \circ f_2 \colon \Spec( \operatorname{Frac} k[[t]]) \to \bar{J}^{2}_{X}$ corresponds to a sheaf that fails to be locally free.  This property persists under specialization, so if $\widetilde{g}_{2}(0) = [I]$, then $I$ must fail to be locally free.  In particular, $\widetilde{g}_2(0) \ne \widetilde{g}_{1}(0)$.  We can conclude that $A \colon X^{(2)} \to \bar{J}^{2}_{X}$ is not a regular map.
\end{example}

The above example shows that, to define a suitable Abel map for a singular curve, we must  replace $X^{(d)}$ with a blow-up that resolves the indeterminacy of $X^{(d)} \dashrightarrow \bar{J}_{X}^{d}$.  In Section~\ref{Section: Abel Map}, we resolve this indeterminacy by a blow-up that is a moduli space that parameterizes generalized divisors, objects we discuss in the next section.

\section{Generalized Divisors} \label{Section: GeneralizedDivisors}
Here we develop the theory of generalized divisors in analogy with the theory of Cartier divisors.  Recall that on a smooth curve  a line bundle is equivalent to a linear equivalence class of Cartier divisors (see Sect.~\ref{Sect: Jacobian}).  Our goal is to define more general divisors on a singular curve so that a rank $1$, torsion-free sheaf is equivalent to a linear equivalence class of these more general divisors.  We define two types of divisors that generalize Cartier divisors: generalized divisors and generalized $\omega$-divisors.  On a Gorenstein curve, generalized divisors are essentially equivalent to generalized $\omega$-divisors, but the two divisors are fundamentally different on a non-Gorenstein curve, and it is only generalized $\omega$-divisors that behave as one expects by analogy with Cartier divisors.  The material in this section is derived from Hartshorne's papers \cite{hartshorne86}, \cite{hartshorne94}, and \cite{hartshorne07}.

\subsection{Generalized Divisors} \label{Subsect: GenDivisor}
Let $X$ be an integral curve.  We begin by defining a generalized divisor on $X$.
\begin{definition}
	  Let $\calK$ denote the sheaf of total quotient rings of $X$.  That is, $\calK$ is the field of rational functions, considered as a locally constant sheaf.  A \textbf{generalized divisor} $D$ on $X$ is a nonzero subsheaf $I_{D} \subset \calK$ that is a coherent $\calO_{X}$-module. If additionally $I_{D}$ is a line bundle, then we say that $D$ is a \textbf{Cartier divisor}.   We say that a generalized divisor $D$ is \textbf{effective} if $I_{D} \subset \calO_{X}$.
 \end{definition}
An effective generalized divisor on $X$ is just a $0$-dimensional closed subscheme $Z \subset X$.  In particular, every closed point $p \in X$ defines a generalized divisor that, by abuse of notation, we denote by $p$.  A second source of generalized divisors is rational functions.  A rational function $f$ generates a subsheaf $\calO_{X} \cdot f \subset \calK$, and we write $\operatorname{div}(f)$ for the corresponding generalized divisor.  The generalized divisor $\operatorname{div}(1)$ is written $0$.

Many of the familiar operations on Cartier divisors extend to generalized divisors.
\begin{definition} \label{Def: SumAndMinus}
	The \textbf{sum} $D+E$ of two generalized divisors $D$ and $E$ is the subsheaf $I_{D+E} \subset \calK$ generated by local sections  of the form $f g$
with $f \in I_{D}$ and $g \in I_{E}$.  The \textbf{minus} $-D$ of  a generalized divisor $D$ is the subsheaf $I_{-D} \subset \calK$ whose local sections are elements $f \in \calK$ satisfying $f \cdot I_{D} \subset \calO_{X}$.

\end{definition}
\begin{remark}
	Our definitions of sum and minus are different from the definitions found on  \cite[p.~88]{hartshorne07}.  There Hartshorne defines $I_{D+E}$ to be the $\omega$-reflexive hull of the module generated by the elements $f g$ rather than the module itself and similarly with $I_{-D}$.  His definition is, however, equivalent to the one just given because the module generated by the $f g$'s is $\omega$-reflexive by Lemma~\ref{Lemma: StarReflexive} below.
\end{remark}

The sum operation is easily seen to make the set of generalized divisors into a commutative monoid with identity $0$.  
More precisely, properties of sum and minus are summarized by the lemma below. 
\begin{lemma} \label{Lemma: DivisorSum}
	The sum and minus operations have the following properties:
\begin{enumerate}
	\renewcommand{\theenumi}{(\alph{enumi})}
	\renewcommand{\labelenumi}{\theenumi}
	
	\item sum is associative and commutative;
	\item $D+0=D$;
	\item $D + -D = 0$ provided $D$ is Cartier; \label{Lemma: DivisorSumOne}  
	\item $-(D+E) = -D + -E$ provided $E$ is Cartier. \label{Lemma: DivisorSumTwo}  
\end{enumerate}
\end{lemma}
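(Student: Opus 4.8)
The plan is to treat all four assertions as equalities of coherent subsheaves of $\calK$, which can be checked on stalks (equivalently, Zariski-locally), and to unwind the explicit generators-and-colon descriptions of sum and minus from Definition~\ref{Def: SumAndMinus}. I expect the two ``formal'' parts, (a) and (b), to follow purely by comparing generating sets, requiring no hypothesis on the divisors; the two ``Cartier'' parts, (c) and (d), I would reduce to the local model in which a Cartier ideal is principal, generated by a single element of $\calK^{*}$, so that the minus operation simply inverts that generator.

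For (a), commutativity is immediate since $I_{D+E}$ and $I_{E+D}$ are both generated by the products $fg = gf$. For associativity I would observe that a local section of $I_{(D+E)+F}$ is a finite sum of products $ah$ with $a \in I_{D+E}$ and $h \in I_{F}$; substituting a local expression $a = \sum f_i g_i$ shows that $I_{(D+E)+F}$ is the $\calO_X$-submodule of $\calK$ generated by the triple products $fgh$ with $f \in I_D$, $g \in I_E$, $h \in I_F$, a set manifestly symmetric in the three divisors, so it coincides with $I_{D+(E+F)}$. For (b), since $0 = \operatorname{div}(1)$ gives $I_0 = \calO_X$, the submodule generated by $\{fg : f \in I_D,\ g \in \calO_X\}$ is just $\calO_X \cdot I_D = I_D$.

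The substance is in (c) and (d). For (c) one inclusion is free: if $f \in I_D$ and $g \in I_{-D}$, then $fg \in \calO_X$ by the very definition of $I_{-D}$, so $I_{D+(-D)} \subset \calO_X = I_0$. For the reverse inclusion I would use that $D$ Cartier means $I_D$ is locally principal, say $I_D = \calO_X \cdot u$ with $u \in \calK^{*}$ on a small open set; then $I_{-D} = \calO_X \cdot u^{-1}$ there, and the generator $u \cdot u^{-1} = 1$ forces $I_{D+(-D)} = \calO_X$ locally, hence globally. For (d), the inclusion $I_{(-D)+(-E)} \subset I_{-(D+E)}$ is again formal: for $a \in I_{-D}$, $b \in I_{-E}$, $g \in I_D$, $h \in I_E$ one has $(ab)(gh) = (ag)(bh) \in \calO_X$, and since $I_{D+E}$ is generated by the products $gh$ this gives $ab \in I_{-(D+E)}$. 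For the reverse inclusion I would pass to an open set where $E$ is principal, $I_E = \calO_X \cdot v$; then $I_{D+E} = v \cdot I_D$ and $I_{-E} = \calO_X \cdot v^{-1}$, so $I_{-(D+E)} = \{f : (fv)I_D \subset \calO_X\} = v^{-1} I_{-D} = I_{(-D)+(-E)}$, matching locally and hence globally. Note that this argument uses only that $E$ is Cartier, matching the asymmetric hypothesis in the statement.

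The main obstacle is conceptual rather than computational: it is precisely the reverse inclusions in (c) and (d) where the Cartier hypothesis cannot be dropped. The cleanest way to see why is to identify $I_{-D}$ with the dual $\ShHom(I_D, \calO_X)$ realized inside $\calK$; the evaluation pairing $I_D \otimes I_{-D} \to \calO_X$ has image $I_{D+(-D)}$, and this map surjects onto $\calO_X$ exactly when $I_D$ is locally free, that is, exactly when $D$ is Cartier. I would therefore take care to justify the local reduction (that equality of these subsheaves of $\calK$ is a stalk-local question, and that $I_{-D}$ is itself a nonzero coherent subsheaf, so that $-D$ is a bona fide generalized divisor on the integral curve $X$), and to flag the one mild subtlety that ``the subsheaf generated by the products $fg$'' must be read as the $\calO_X$-submodule they generate, so that the associativity argument in (a) is genuinely a statement about triple products.
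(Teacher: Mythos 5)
Your proof is correct, but it takes a different route from the paper: the paper offers no argument at all for this lemma, simply citing \cite[Prop.~2.2]{hartshorne07}, whereas you give a self-contained verification by stalk-local computation directly from Definition~\ref{Def: SumAndMinus}. The comparison is worth making explicit. The citation is not entirely cost-free: as the paper's own remark after Definition~\ref{Def: SumAndMinus} notes, Hartshorne defines $I_{D+E}$ and $I_{-D}$ via $\omega$-reflexive hulls, so invoking his Prop.~2.2 implicitly relies on the equivalence of the two definitions (which the paper justifies through Lemma~\ref{Lemma: StarReflexive}). Your argument sidesteps that reconciliation entirely by working with the submodule-generated-by-products definition as stated, and it isolates exactly where the Cartier hypothesis enters --- the reverse inclusions in (c) and (d), where local principality of the Cartier ideal lets you invert the generator. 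Your closing remark that surjectivity of the evaluation pairing $I_{D}\otimes I_{-D}\to\calO_{X}$ characterizes local freeness is also accurate (over a local ring, a unit in the trace ideal splits off a free summand, and a rank $1$ torsion-free module with a free summand is free), and it explains why (c) genuinely fails for non-Cartier divisors, consistent with the pathologies the paper points to in \cite[Sect.~3]{hartshorne94}. What the citation buys is brevity and the generality of Hartshorne's framework (which covers higher-dimensional schemes); what your proof buys is transparency and independence from the reflexive-hull formalism, at the cost of about a page.
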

\begin{proof}
	This is \cite[Prop.~2.2]{hartshorne07}.
\end{proof}
In the last two parts of Lemma~\ref{Lemma: DivisorSum}, it is necessary to assume that one of the divisors is Cartier.  The sum operation is not well-behaved when 
applied to non-Cartier divisors, as is shown by the examples in  \cite[Sect.~3]{hartshorne94}.

We now define linear equivalence.
\begin{definition}
	Two generalized divisors $D$ and $E$ are \textbf{linearly equivalent} if there exists a rational function $f$ with $D=E+\operatorname{div}(f)$.  Given $D$, the associated \textbf{complete linear system} $|D|$ is the set of all effective generalized divisors linearly equivalent to $D$.  
\end{definition}
Linear equivalence is immediately seen to define an equivalence relation on the set of generalized divisors.   The lemma below provides an alternative characterization of linear equivalence.

\begin{lemma} \label{Lemma: PropertiesGeneralizedDivisor}
	The following relations between generalized divisors and rank $1$, torsion-free sheaves hold:
\begin{enumerate}
	\renewcommand{\theenumi}{(\alph{enumi})}
	\renewcommand{\labelenumi}{\theenumi}
	
	\item $D$ is linearly equivalent to $E$ if and only if $I_{D}$ is isomorphic to $I_{E}$; \label{Item: DivisorPropertyIso}
	\item every rank $1$, torsion-free sheaf $I$ is isomorphic to $I_{D}$ for some generalized divsior $D$; \label{Item: DivisorPropertyExist}
	\item $I_{D} \otimes I_{E} \cong I_{D+E}$ provided $E$ is Cartier; \label{Item: DivisorPropertySum} 
	\item $I_{-D} \cong \ShHom(I_{D},\calO_{X})$. \label{Item: DivisorPropertyDiff}
\end{enumerate}
\end{lemma}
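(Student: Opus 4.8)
The plan is to prove each of the four parts by unwinding the definitions of the operations on generalized divisors and relating them to the corresponding sheaf-theoretic operations. The key fact I will exploit throughout is that all the sheaves $I_D \subset \calK$ are nonzero coherent subsheaves of the constant sheaf of rational functions, so isomorphisms between them can be detected at the generic point: any $\calO_X$-module isomorphism $I_D \to I_E$ is, after identifying both with subsheaves of $\calK$, multiplication by a fixed rational function $f \in k(X)^{\ast}$.

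First I would prove part \ref{Item: DivisorPropertyIso}. If $D = E + \operatorname{div}(f)$, then by the definition of sum, $I_D$ is the subsheaf of $\calK$ generated locally by products $g \cdot f$ with $g \in I_E$; since $f$ is a fixed nonzero rational function, this is exactly $f \cdot I_E$, and multiplication by $f$ gives an isomorphism $I_E \xrightarrow{\sim} I_D$. Conversely, given an $\calO_X$-module isomorphism $\varphi \colon I_E \to I_D$, I extend $\varphi$ over the generic point: both sheaves agree with $\calK$ at the generic point of $X$ (they are rank $1$), so $\varphi$ is multiplication by some $f \in k(X)^{\ast}$, whence $I_D = f \cdot I_E$ and $D = E + \operatorname{div}(f)$. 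For part \ref{Item: DivisorPropertyExist}, given a rank $1$ torsion-free sheaf $I$, I choose the generic isomorphism $I|_U \cong \calO_U$ furnished by the rank $1$ hypothesis; since $I$ is torsion-free, the restriction map $I \to I \otimes \calK = \calK$ is injective, so $I$ embeds as a nonzero coherent subsheaf of $\calK$, which is by definition a generalized divisor $D$ with $I_D \cong I$.

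Part \ref{Item: DivisorPropertySum} is the statement that the monoid structure on divisors matches the tensor product on sheaves when one factor is Cartier. Since $E$ is Cartier, $I_E$ is an invertible subsheaf of $\calK$, so locally $I_E = \calO_X \cdot h$ for a single rational function $h$. Then the subsheaf generated by products $fg$ with $f \in I_D$, $g \in I_E$ is locally $I_D \cdot h$, which is the image of $I_D \otimes I_E \to \calK$; because $I_E$ is locally free of rank $1$ this map is an isomorphism onto its image, giving $I_D \otimes I_E \cong I_{D+E}$. Part \ref{Item: DivisorPropertyDiff} is essentially the definition unwound: local sections of $I_{-D}$ are rational functions $f$ with $f \cdot I_D \subset \calO_X$, and sending such an $f$ to the homomorphism ``multiply by $f$'' gives a map $I_{-D} \to \ShHom(I_D, \calO_X)$; conversely any $\calO_X$-homomorphism $I_D \to \calO_X$ is, at the generic point, multiplication by a unique $f \in k(X)$, and the containment $I_D \cdot f \subset \calO_X$ shows $f \in I_{-D}$, so the map is an isomorphism.

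The step I expect to require the most care is the converse direction of part \ref{Item: DivisorPropertyIso} together with the local-freeness inputs in part \ref{Item: DivisorPropertySum}: one must check that a global isomorphism of sheaves really is globally multiplication by a single rational function (not just locally), which uses that $X$ is integral so that local multipliers agreeing on overlaps glue to an element of $k(X)$. The Cartier hypothesis in \ref{Item: DivisorPropertySum} is genuinely needed to ensure the natural surjection $I_D \otimes I_E \to I_{D+E}$ is injective; without it the tensor product can acquire torsion and the comparison with the generated subsheaf fails, mirroring the breakdown of the monoid operations noted after Lemma~\ref{Lemma: DivisorSum}. I would remark that all four statements appear as parts of \cite[Prop.~2.5 and surrounding results]{hartshorne07}, so the proof is largely a matter of translating between Hartshorne's conventions and the subsheaf-of-$\calK$ formalism adopted here.
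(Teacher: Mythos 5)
Your proposal is correct, and it is more self-contained than the paper's own proof. The paper disposes of parts \ref{Item: DivisorPropertyIso}, \ref{Item: DivisorPropertyExist}, and \ref{Item: DivisorPropertyDiff} by citation (parts \ref{Item: DivisorPropertyIso} and \ref{Item: DivisorPropertyExist} to \cite[Prop.~2.4]{hartshorne07}, part \ref{Item: DivisorPropertyDiff} to \cite[Lem.~2.2]{hartshorne94}) and only writes out an argument for part \ref{Item: DivisorPropertySum}; there its argument is exactly yours --- reduce the injectivity of the natural surjection $I_{D} \otimes I_{E} \to I_{D+E}$ to the local situation where $I_{E}$ is principal. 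What you add is a complete proof of the cited parts, all running through one technique: a nonzero coherent subsheaf of $\calK$ has generic stalk equal to $k(X)$, so any $\calO_{X}$-homomorphism between two such subsheaves is determined by its effect at the generic point and is therefore globally multiplication by a single rational function (your gluing remark is exactly the right point, and it is where integrality of $X$ enters). This simultaneously gives the converse in \ref{Item: DivisorPropertyIso}, the identification $I_{-D} \cong \ShHom(I_{D},\calO_{X})$ in \ref{Item: DivisorPropertyDiff}, and, combined with torsion-freeness giving the injection $I \hookrightarrow I \otimes \calK \cong \calK$, the existence statement \ref{Item: DivisorPropertyExist}. The trade-off is only length versus self-containment: the paper's citations keep the exposition short, while your version makes the lemma independent of Hartshorne's conventions, which is genuinely useful since the paper's definitions of sum and minus (as subsheaves rather than reflexive hulls) differ superficially from those in \cite{hartshorne07}.
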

\begin{proof}
	Conditions~\ref{Item: DivisorPropertyIso} and \ref{Item: DivisorPropertyExist} are \cite[Prop.~2.4]{hartshorne07}, and the reader may find a  proof of Condition~\ref{Item: DivisorPropertyDiff}  in \cite[Lem.~2.2]{hartshorne94}.   To prove Condition~\ref{Item: DivisorPropertySum}, it is enough to prove that the natural surjection  $I_{D} \otimes I_{E} \to I_{D+E}$ is injective.  Injectivity can be checked locally, so we may assume $I_{E}$ is principal, in which case injectivity is immediate.  
\end{proof}

To study $|D|$, we make the following definition.
\begin{definition}
	Write $I^{\vee} := \ShHom(I, \calO_{X})$ for the \textbf{$\calO_{X}$-linear dual} of a coherent $\calO_{X}$-module $I$.  The sheaf $\calL(D)$ associated to a generalized divisor $D$ is defined by $\calL(D) := I_{D}^{\vee}$. An \textbf{adjoint generalized divisor} $\operatorname{adj} D$ of the generalized divisor $D$ is a generalized divisor satisfying $\calL(\operatorname{adj} D )=\ShHom(\calL(D),\omega)$.  A \textbf{canonical generalized divisor} $K$ is an adjoint divisor of $0$ (so $\calL(K)=\omega$).
\end{definition}

A canonical divisor exists when $\omega$ is reflexive (e.g.~when $X$ is Gorenstein).  Indeed, by Lemma~\ref{Lemma: PropertiesGeneralizedDivisor} we can write $I_{K} = \omega^{\vee}$ for some generalized divisor $K$.  We then have $\calL(K)= (\omega^{\vee})^{\vee}$, which equals $\omega$ by reflexivity.  Thus $K$ is a canonical divisor.  

When $X$ is Gorenstein,  $\omega$ is not only reflexive but in fact locally free, so $K$ exists and is Cartier.  The canonical divisor $K$ is base-point free when $g \ge 1$ by \cite[Thm.~1.6]{hartshorne86}, and so it determines a \textbf{canonical map} $X \to \bbP H^{0}(X, \calL(K))^{\vee} = \bbP^{g-1}$.  The image of this morphism is a curve provided $g \ge 2$, and in this case, we define the image to be the \textbf{canonical curve}.  There is a definition of the canonical curve and the canonical map of a non-Gorenstein curve (see e.g.~\cite{kleiman09}), but the definitions are more complicated, and we do not study them here.

The assumption that $X$ is Gorenstein also implies that the adjoint divisor of a given divisor $D$ exists as we can take $\operatorname{adj} D := K+ -D$.  Just as with a smooth curve, the elements of $|\operatorname{adj} D|$ 
are the preimages of hyperplanes in canonical space $\bbP^{g-1}$ that contain $D$.

When $X$ is non-Gorenstein, an adjoint divisor $\operatorname{adj} D$ of a divisor $D$ may not exist, and when it exists, it may not be unique even up to linear equivalence because we cannot  recover $I_{\operatorname{adj} D}$ from its dual $\calL(\operatorname{adj} D)$.  We can, however, recover $I_{\operatorname{adj} D}$ when this sheaf is reflexive because then $I_{\operatorname{adj} D} \cong \calL(\operatorname{adj} D)^{\vee}$.  The following lemma shows that $I_{\operatorname{adj} D}$ is always reflexive when $X$ is Gorenstein.

\begin{lemma} \label{Lemma: GorensteinMeansReflexive}
	If $X$ is Gorensten, then every rank $1$, torsion-free sheaf is reflexive.  That is, the natural map $I \to (I^{\vee})^{\vee}$ is an isomorphism.  In fact, we have
	\begin{displaymath}
	D = -(-D)
	\end{displaymath}
for all generalized divisors $D$.
\end{lemma}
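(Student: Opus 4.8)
The plan is to prove reflexivity first, by reducing it to a purely local statement about one-dimensional Gorenstein local rings, and then to deduce the divisor identity $D = -(-D)$ by matching the abstract biduality map with the tautological inclusion of subsheaves of $\calK$.

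First I would check reflexivity stalkwise, which is legitimate because both the formation of the natural map $I \to (I^\vee)^\vee$ and the condition that it be an isomorphism are local. Fix $p \in X$ and set $R = \calO_{X,p}$, a one-dimensional Gorenstein local ring (here I use that $X$ is integral, so $R$ is a domain), and let $M = I_p$, a finitely generated torsion-free $R$-module. Since $M$ is torsion-free over the domain $R$ we have $\operatorname{Ass}(M) = \{0\}$, so the maximal ideal is not associated to $M$ and hence $\operatorname{depth} M \ge 1$; as $\dim R = 1$ this forces $\operatorname{depth} M = 1 = \dim R$, i.e.\ $M$ is maximal Cohen--Macaulay. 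Because $R$ is Gorenstein it is its own canonical module, and the functor $\Hom_R(-,R)$ restricts to a duality on the category of maximal Cohen--Macaulay $R$-modules; in particular the biduality map $M \to \Hom_R(\Hom_R(M,R),R)$ is an isomorphism. This is exactly the stalk at $p$ of $I \to (I^\vee)^\vee$, so that map is an isomorphism at every point, hence globally, proving reflexivity of every rank $1$, torsion-free sheaf.

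The only genuinely nontrivial ingredient is the duality theorem for maximal Cohen--Macaulay modules over a Gorenstein local ring; I would invoke this from the appendix on the dualizing sheaf, or, equivalently, phrase it as the statement that the $\omega$-dual $\ShHom(-,\omega)$ is a duality on rank $1$, torsion-free sheaves on any Cohen--Macaulay curve, combined with the identification $\ShHom(I,\omega) \cong I^\vee \otimes \omega$ that holds because $\omega$ is a line bundle in the Gorenstein case. Everything after this point is bookkeeping. To pass to $D = -(-D)$, I would translate the reflexivity just proved into an equality of subsheaves of $\calK$. By Lemma~\ref{Lemma: PropertiesGeneralizedDivisor}\ref{Item: DivisorPropertyDiff} the sheaf $I_{-D}$ is canonically $I_D^\vee = \ShHom(I_D,\calO_X)$, realized inside $\calK$ as $\{\, f \in \calK : f \cdot I_D \subset \calO_X \,\}$, which is precisely Definition~\ref{Def: SumAndMinus} of minus; applying this twice identifies $I_{-(-D)}$ with $(I_D^\vee)^\vee$. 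One inclusion is formal: if $h \in I_D$ then $h \cdot I_{-D} \subset \calO_X$ by the very definition of $I_{-D}$, so $h \in I_{-(-D)}$, giving $I_D \subset I_{-(-D)}$. Under the identifications above this inclusion is the natural biduality map $I_D \to (I_D^\vee)^\vee$, which is an isomorphism by the first part of the lemma; hence the inclusion is an equality, $I_{-(-D)} = I_D$, and therefore $D = -(-D)$.

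The step I expect to require the most care is confirming that the natural double-dual map agrees with the tautological inclusion $I_D \subset I_{-(-D)}$ of subsheaves of $\calK$ --- that is, that the canonical isomorphism $I_{-D} \cong \ShHom(I_D,\calO_X)$ of Lemma~\ref{Lemma: PropertiesGeneralizedDivisor}\ref{Item: DivisorPropertyDiff} is compatible with biduality. This is a compatibility of natural transformations which is verified by unwinding the definitions on the generic stalk, where every sheaf becomes a one-dimensional $k(X)$-vector space and all the relevant maps are multiplication operators; nonetheless it deserves to be stated explicitly rather than left implicit, since it is what converts the abstract reflexivity statement into the sharp equality of divisors.
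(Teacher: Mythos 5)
Your proposal is correct, but it is worth noting that the paper does not actually prove this lemma at all: its ``proof'' is the single line ``This is \cite[Lem.~1.1]{hartshorne86},'' outsourcing everything to Hartshorne. What you have written is a genuine, self-contained replacement, and the two halves are both sound. The local reduction is right: on an integral curve each stalk $R=\calO_{X,p}$ is a one-dimensional Gorenstein local domain, torsion-free implies $\operatorname{Ass}(M)=\{0\}$, hence $\operatorname{depth} M = 1 = \dim M$, so $M$ is maximal Cohen--Macaulay, and the standard duality theorem for MCM modules over a Gorenstein local ring (the case $\omega_R \cong R$ of duality with respect to the canonical module) gives biduality stalkwise; this is essentially the same commutative algebra that underlies Hartshorne's own argument, so your route is a faithful filling-in rather than a detour. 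The second half is the part most often left vague, and you handle it correctly: the isomorphism $I_{-D}\cong\ShHom(I_D,\calO_X)$ of Lemma~\ref{Lemma: PropertiesGeneralizedDivisor}\ref{Item: DivisorPropertyDiff} sends $f$ to multiplication by $f$, so under it the evaluation map $h\mapsto(\phi\mapsto\phi(h))$ becomes exactly the tautological inclusion $I_D\subset I_{-(-D)}$ inside $\calK$; since all maps involved are $\calO_X$-linear maps of rank $1$, torsion-free sheaves, they are determined by their action on the generic stalk, so your generic-point verification of this compatibility is legitimate, and reflexivity then upgrades the inclusion to the equality $I_D=I_{-(-D)}$, i.e.\ $D=-(-D)$. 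The added value of your version over the paper's citation is that it makes explicit precisely where the Gorenstein hypothesis enters (the stalks being their own canonical modules), which is the hypothesis that visibly fails in the paper's Example~\ref{Example: NonReflexiveDualizing}.
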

\begin{proof}
	This is \cite[Lem.~1.1]{hartshorne86}. 
\end{proof}

The lemma is false if we omit the hypothesis that $X$ is Gorenstein.
\begin{example} \label{Example: NonReflexiveDualizing}
	We provide an example of a curve whose dualizing sheaf is not reflexive.  Take $X$ to be the curve in Example~\ref{Example: Dualizing}.  This is a genus $2$ curve with a unique singularity $p_0$ that is unibranched and non-Gorenstein.

We compute $\omega^{\vee}$ and $(\omega^{\vee})^{\vee}$.  Let $\calK_{\omega}$ denote the locally constant sheaf of rational $1$-forms.  Write $\partial_{t}$ for the functional $(\calK_{\omega})|_{X_1} \to \calK|_{X_1}$ that sends $dt$ to $1$ and similarly for $\partial_{s} \colon (\calK_{\omega})|_{X_2} \to \calK|_{X_2}$.  A computation shows that the image of any functional $\omega|_{X_1} \to \calO_{X_1}$ is contained in the subsheaf of regular functions that vanish at the singularity $p_0$, so $\omega^{\vee}$ can be described as the sheaf generated by $t^{6} \partial_{t}, t^{7} \partial_{t}, t^{8} \partial_{t}$  on $X_1$ and by $\partial_{s}$ on $X_2$.  The same reasoning shows that $(\omega^{\vee})^{\vee}$ is generated by $dt/t^{3}, dt/t^{2}, dt/t$ on $X_1$ and by $ds$ on $X_2$.  In particular, $\omega \to (\omega^{\vee})^{\vee}$ is not an isomorphism because $dt/t$ is not in the image. 
\end{example}

The curve from the above example does not admit a canonical divisor.  Indeed, we have seen that the reflexivity of $\omega$ is a sufficient condition for the existence of $K$, but it is also a necessary condition because every dual module is reflexive. As the dualizing module of the curve in Example~\ref{Example: NonReflexiveDualizing} is not reflexive, this curve does not admit a canonical divisor.

We now develop the properties of complete linear systems.  The sheaf $\calL(D) = \ShHom(I_{D}, \calO_{X})$ has the property that the natural map 
\begin{displaymath}
	H^{0}(X, \ShHom(I_{D}, \calO_{X})) \to \Hom(I_{D}, \calO_{X})
\end{displaymath}
is an isomorphism.  (Indeed, this holds quite generally when $I_{D}$ is replaced by an arbitrary coherent sheaf.)  We use this observation in the following lemma to describe $|D|$ in terms of a cohomology group.
\begin{lemma} \label{Lemma: DivisorToSectionGor}
	Let $D$ be a generalized divisor.  Then the rule that sends a nonzero global section of $\calL(D)$ to the image of the corresponding homomorphism $I_{D} \to \calO_{X}$  defines a bijection
	\begin{equation} \label{Eqn: DivisorToSectionGor}
		\bbP H^{0}(X, \calL(D)) \cong |D|.
	\end{equation}
\end{lemma}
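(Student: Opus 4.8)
The plan is to realize the claimed bijection through the identification $H^{0}(X,\calL(D)) = \Hom(I_{D},\calO_{X})$ recorded just before the statement, under which a nonzero section $s$ corresponds to a nonzero homomorphism $\phi_{s}\colon I_{D}\to\calO_{X}$. First I would observe that any such $\phi_{s}$ is injective: since $X$ is integral and $I_{D}$ has rank $1$, at the generic point $\phi_{s}$ is a nonzero $k(X)$-linear endomorphism of a $1$-dimensional space, hence an isomorphism, so $\ker\phi_{s}$ is a torsion subsheaf of the torsion-free sheaf $I_{D}$ and therefore vanishes. Its image $\operatorname{im}(\phi_{s})$ is then a nonzero coherent subsheaf of $\calO_{X}\subset\calK$, that is, the sheaf $I_{E}$ of an effective generalized divisor $E=E_{s}$. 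Because $\phi_{s}$ induces an isomorphism $I_{D}\cong I_{E_{s}}$, Lemma~\ref{Lemma: PropertiesGeneralizedDivisor}\ref{Item: DivisorPropertyIso} gives $E_{s}\sim D$, so $E_{s}\in|D|$. This defines a set map from nonzero sections of $\calL(D)$ to $|D|$.

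Next I would check that this rule descends to $\bbP H^{0}(X,\calL(D))$ and is surjective. Replacing $s$ by $\lambda s$ with $\lambda\in k^{\ast}$ multiplies $\phi_{s}$ by the global unit $\lambda$ and leaves its image unchanged, so $E_{\lambda s}=E_{s}$ and the rule factors through $\bbP H^{0}(X,\calL(D))$. For surjectivity, take any $E\in|D|$; effectivity means $I_{E}\subset\calO_{X}$, while $E\sim D$ furnishes an isomorphism $\alpha\colon I_{D}\xrightarrow{\sim}I_{E}$ by Lemma~\ref{Lemma: PropertiesGeneralizedDivisor}\ref{Item: DivisorPropertyIso}. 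Composing $\alpha$ with the inclusion $I_{E}\hookrightarrow\calO_{X}$ yields a nonzero homomorphism whose image is exactly $I_{E}$, and the corresponding section maps to $E$.

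Finally I would prove injectivity, which is where the real work lies. If two nonzero sections $s,t$ satisfy $\operatorname{im}(\phi_{s})=\operatorname{im}(\phi_{t})=I_{E}$, then $\phi_{s}$ and $\phi_{t}$ are both isomorphisms $I_{D}\xrightarrow{\sim}I_{E}$, so $\phi_{t}^{-1}\phi_{s}$ is an automorphism of $I_{D}$; it then suffices to show $\operatorname{Aut}(I_{D})=k^{\ast}$, for then $\phi_{s}=\lambda\phi_{t}$ and $[s]=[t]$ in projective space. The key computation is $\operatorname{End}(I_{D})=k$: viewing $\ShHom(I_{D},I_{D})$ as a subsheaf of the constant sheaf $\calK$, its global sections form the ring $R=\{\,c\in k(X):c\,I_{D}\subseteq I_{D}\,\}$, which is a subring of the field $k(X)$ and hence a domain. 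Since $X$ is projective, $R=H^{0}(X,\ShHom(I_{D},I_{D}))$ is a finite-dimensional $k$-algebra; a finite-dimensional domain over a field is itself a field, and being finite over the algebraically closed field $k$ it must equal $k$. Thus $\operatorname{Aut}(I_{D})=k^{\ast}$, which completes the argument. The main obstacle is precisely this endomorphism computation: injectivity rests on the fact that a rank $1$ torsion-free sheaf on a projective integral curve admits only scalar automorphisms, which in turn uses properness (finite-dimensionality of global sections) together with the integrality of $X$ (so that the endomorphism ring embeds in $k(X)$).
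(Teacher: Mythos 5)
Your proof is correct, and its skeleton matches the paper's: you show every nonzero homomorphism $I_{D} \to \calO_{X}$ is injective (so the rule lands in $|D|$, via Lemma~\ref{Lemma: PropertiesGeneralizedDivisor}\ref{Item: DivisorPropertyIso}), note surjectivity is essentially formal, and reduce injectivity to the statement that $I_{D}$ has only scalar automorphisms. The one genuine difference is how that last fact is handled. The paper simply cites \cite{altman80} (Cor.~5.3 and Lem.~5.4) for $\operatorname{Aut}(I_{D}) = k^{\ast}$, whereas you prove it from scratch: since $I_{D} \subset \calK$ is torsion-free of rank $1$, restriction to the generic point identifies $\operatorname{End}(I_{D})$ with the ring $R = \{c \in k(X) : c\, I_{D} \subseteq I_{D}\}$, a domain; properness of $X$ makes $R$ a finite-dimensional $k$-algebra, hence a field, hence equal to $k$ because $k$ is algebraically closed. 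This argument is sound (it is the standard simplicity argument for rank $1$ torsion-free sheaves on an integral projective curve) and makes the lemma self-contained, which is a real gain in a survey whose stated purpose includes developing this material explicitly. What the citation buys instead is the stronger relative statement of Altman--Kleiman, valid in flat families, which the paper needs elsewhere (e.g.\ in the construction of $\bar{J}^{d}_{X}$ as a quotient by linear equivalence); your pointwise computation would not suffice for those later uses, but it is exactly what this lemma requires.
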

\begin{proof}
	Under the hypothesis that $X$ is Gorenstein, this is stated on \cite[p.~378, top]{hartshorne86}, but the fact remains valid when $X$ is non-Gorenstein. To show that the map is well-defined, we need to show that any nonzero homomorphism $\phi \colon I_{D} \to \calO_{X}$ is injective.  Such a $\phi$ must be generically nonzero because $\calO_{X}$ is torsion-free.  Because both $I_{D}$ and $\calO_{X}$ are rank $1$, we can conclude  that $\phi$ is in fact  generically an isomorphism.  In particular, the kernel is supported on a proper closed subset of $X$.  But the only such subsheaf of $I_{D}$ is the zero subsheaf as $I_{D}$ is torsion-free, so $\phi$ is injective.  
	
	Given that the map \eqref{Eqn: DivisorToSectionGor} is well-defined, it is immediate that the map is surjective.  To show injectivity, we argue as follows.  Suppose that we are given two nonzero global sections $\sigma_1$ and $\sigma_2$ that correspond to  two homomorphisms $\phi_1, \phi_2 \colon I_{D} \to \calO_{X}$ with the same image.  We have just shown that $\phi_{2}$ is injective, so this homomorphism is an isomorphism onto its image.  If we write $\phi_{2}^{-1} \colon \operatorname{im}(\phi_1) = \operatorname{im}(\phi_2) \to I_{D}$ for the inverse homomorphism, then $\alpha := \phi^{-1}_{2} \circ \phi_{1}$ satisfies $\phi_{1} = \phi_{2} \circ \alpha$.  The only automorphisms of $I_{D}$ are the maps given by multiplication by nonzero scalars  by \cite[Cor.~5.3, Lem.~5.4]{altman80}.  If $\alpha$ is given by multiplication by $c \in k^{\ast}$, then  $\phi_{1} = c \cdot \phi_{2}$ or equivalently $\sigma_1 = c \cdot \sigma_2$.  In other words, $\sigma_1$ and $\sigma_2$ define the same point of $\bbP H^{0}(X, \calL(D))$, showing injectivity.  This completes the proof.
\end{proof}

Motivated by the previous lemma, we now study the cohomology of $\calL(D)$.  The most important numerical invariant controlling the cohomology is the degree.

\begin{definition}
	The \textbf{degree} $\deg(D)$ of a generalized divisor $D$ is defined by $\deg(D) := -\deg(I_{D})$.  
\end{definition}

\begin{lemma} \label{Lemma: DegreeOfDivisor}
The degree function has the following properties:
	\begin{enumerate}
		\renewcommand{\theenumi}{(\alph{enumi})}
		\renewcommand{\labelenumi}{\theenumi}
		
		\item if $D$ is an effective generalized divisor, then $\deg(D)$ is the length of $\calO_{D}$; \label{Lemma: DegreOfDivisorOne}
		\item $\deg(D+E) = \deg(D) + \deg(E)$ provided $E$ is Cartier; \label{Lemma: DegreOfDivisorTwo} 
		\item $\deg(-D) = -\deg(D)$ provided $X$ is Gorenstein; \label{Lemma: DegreOfDivisorThree}
		\item if $D$ is linearly equivalent to $E$, then $\deg(D)=\deg(E)$. \label{Lemma: DegreOfDivisorFour}
	\end{enumerate}
\end{lemma}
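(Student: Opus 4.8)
The plan is to rephrase everything in terms of Euler characteristics, using that $\deg(D) = -\deg(I_D) = \chi(\calO_X) - \chi(I_D)$ by the two degree definitions. With this reformulation, part (d) is immediate: if $D$ is linearly equivalent to $E$ then $I_D \cong I_E$ by Lemma~\ref{Lemma: PropertiesGeneralizedDivisor}\ref{Item: DivisorPropertyIso}, so $\chi(I_D) = \chi(I_E)$ and the degrees coincide. Part (a) is nearly as quick: an effective $D$ yields the short exact sequence $0 \to I_D \to \calO_X \to \calO_D \to 0$, so $\chi(I_D) = \chi(\calO_X) - \chi(\calO_D)$, and $\chi(\calO_D)$ equals the length of $\calO_D$ because this sheaf has $0$-dimensional support; hence $\deg(D) = \operatorname{length}(\calO_D)$.

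First I would isolate the one genuinely new computation, which I expect to be the main obstacle: for every rank $1$, torsion-free sheaf $I$ and every line bundle $L$, one has $\chi(I \otimes L) = \chi(I) + \deg(L)$. I would prove this for $L = I_E$ with $E$ effective Cartier and then bootstrap. When $E$ is effective Cartier, tensoring $0 \to I_E \to \calO_X \to \calO_E \to 0$ with the torsion-free sheaf $I$ remains exact on the left, since locally $I_E \hookrightarrow \calO_X$ is multiplication by a non-zero-divisor, which stays injective on $I$; this gives $0 \to I \otimes I_E \to I \to I \otimes \calO_E \to 0$. The crux is the local identity $\operatorname{length}(I_p/f I_p) = \operatorname{length}(\calO_{X,p}/f\calO_{X,p})$ for a non-zero-divisor $f$, which I would verify by realizing $I_p$ as a nonzero ideal of $\calO_{X,p}$ (possible since $I$ is torsion-free of rank $1$) and comparing lengths across the chain $fI_p \subset I_p \subset \calO_{X,p}$; this is exactly where torsion-freeness and the ability to handle singular support enter. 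The identity gives $\chi(I \otimes \calO_E) = \operatorname{length}(\calO_E) = \deg(E)$, hence the formula for $L = \calO_X(-E)$. Applying this to the twist $I \otimes \calO_X(E)$ yields the formula for $\calO_X(E)$ as well, and writing an arbitrary line bundle as $\calO_X(E_1 - E_2)$ with $E_1, E_2$ effective Cartier — obtained by realizing both $L \otimes \calO_X(n)$ and $\calO_X(n)$ as divisors of sections for $n \gg 0$ relative to a very ample $\calO_X(1)$ — extends the formula to every $L$.

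With this additivity in hand, part (b) is formal: since $E$ is Cartier, $I_E$ is a line bundle and $I_{D+E} \cong I_D \otimes I_E$ by Lemma~\ref{Lemma: PropertiesGeneralizedDivisor}\ref{Item: DivisorPropertySum}, so $\chi(I_{D+E}) = \chi(I_D) + \deg(I_E)$. Translating back through $\deg(D) = \chi(\calO_X) - \chi(I_D)$ and $\deg(E) = -\deg(I_E)$ gives $\deg(D+E) = \deg(D) + \deg(E)$.

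Part (c) is where I would invoke the Gorenstein hypothesis through Serre duality. Since $X$ is Gorenstein, $\omega$ is a line bundle and a rank $1$, torsion-free sheaf is maximal Cohen--Macaulay, so the higher $\ShExt$ sheaves into $\omega$ vanish and Serre duality collapses to $\chi(I) = -\chi(I^\vee \otimes \omega)$ for every such $I$. Taking $I = I_D$, using $I_D^\vee = I_{-D}$ (Lemma~\ref{Lemma: PropertiesGeneralizedDivisor}\ref{Item: DivisorPropertyDiff}), and applying the additivity formula to the line bundle $\omega$ to get $\chi(I_{-D} \otimes \omega) = \chi(I_{-D}) + \deg(\omega)$, I would combine this with $\deg(\omega) = 2g-2$ and $\chi(\calO_X) = 1-g$; a short computation then yields $\chi(I_D) + \chi(I_{-D}) = 2\chi(\calO_X)$, which is precisely $\deg(-D) = -\deg(D)$. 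The essential work is thus concentrated in the Euler-characteristic additivity of the second paragraph, and especially in the local length computation; everything else is bookkeeping or a direct appeal to Serre duality.
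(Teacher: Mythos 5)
Your proposal is correct and follows essentially the same route as the paper: (a) from additivity of $\chi$ applied to $0 \to I_{D} \to \calO_{X} \to \calO_{D} \to 0$, (d) from Lemma~\ref{Lemma: PropertiesGeneralizedDivisor}\ref{Item: DivisorPropertyIso}, (b) from the additivity $\chi(I \otimes L) = \chi(I) + \deg(L)$ together with Lemma~\ref{Lemma: PropertiesGeneralizedDivisor}\ref{Item: DivisorPropertySum}, and (c) from (b) combined with coherent duality via $\ShHom(I_{D},\omega) = I_{D}^{\vee} \otimes \omega$ and $\chi(\ShHom(I_{D},\omega)) = -\chi(I_{D})$. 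The only difference is that the paper simply cites Kleiman (1966) for the key additivity identity, whereas you prove it from scratch via the local length identity $\operatorname{length}(I_{p}/fI_{p}) = \operatorname{length}(\calO_{X,p}/f\calO_{X,p})$ and reduction to effective Cartier divisors; that self-contained argument is valid.
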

\begin{proof}
Property~\ref{Lemma: DegreOfDivisorOne} follows from the additivity of the Euler characteristic $\chi$, and Property~\ref{Lemma: DegreOfDivisorFour} is immediate from Lemma~\ref{Lemma: PropertiesGeneralizedDivisor}\ref{Item: DivisorPropertyIso}.  Property~\ref{Lemma: DegreOfDivisorTwo} follows from the more general identity $\chi(I_{D} \otimes I_{E}^{\otimes n}) = n \deg(I_{E}) + \chi(I_{D})$ which is e.g.~p.~295 and Corollary 2, p.~298 of \cite[Chap.~1]{kleiman66}.  Property~\ref{Lemma: DegreOfDivisorTwo} and coherent duality imply Property~\ref{Lemma: DegreOfDivisorThree}.  Indeed, if $X$ is Gorenstein, then $\ShHom(I_{D}, \omega) = I_{D}^{\vee} \otimes \omega$.  By \ref{Lemma: DegreOfDivisorTwo}, $ \deg(\omega) + \chi(I_{D}^{\vee}) = \chi(I_{D}^{\vee} \otimes \omega)$, and $\chi(\ShHom(I_{D}, \omega)) = -\chi(I_{D})$ by coherent duality.  We now deduce \ref{Lemma: DegreOfDivisorThree} by elementary algebra.

\end{proof}
\begin{remark}
	When $X$ is Gorenstein, Hartshorne defines the degree differently on \cite[p.~2]{hartshorne86}, but the two definitions coincide by  \cite[Prop.~2.16]{hartshorne94}.
\end{remark}

One consequence of Lemma~\ref{Lemma: DegreeOfDivisor}\ref{Lemma: DegreOfDivisorThree} is that 
$$
	\deg(\calL(D))=\deg(D)
$$
 when $X$ is Gorenstein (as $I_{-D} = \calL(D)$).   When $X$ is non-Gorenstein, this equality can fail, as the example below shows.

\begin{example} \label{Example: WrongDegree}
	We give an example where $\deg(\calL(D)) \ne \deg(D)$ or equivalently $\deg(-D) \ne -\deg(D)$.  Consider the non-Gorenstein genus $2$ curve $X$ from Example~\ref{Example: Dualizing} and the generalized divisor $D=p_0$ that is the singularity.  We have $\deg(p_0)=1$, but we claim $\deg(-p_0)=-2$.  The sheaf $I_{p_0}$ is the ideal generated by $(t^{3}, t^{4}, t^{5})$ on $X_1$ and by $(1)$ on $X_2$.  A direct computation shows that $I_{-p_0}$ is the subsheaf of $\calK$ generated by $(1,t,t^{2})$ on $X_1$ and by $(1)$ on $X$.  The degree of $\calL(p_0)$ is $2= 1+\deg(p_0)$, not $\deg(p_0)$.  (To compute the degree, observe that $\calO_{X} \subset I_{-D}$ has colength $2$.)
\end{example}

While we always have $|D| = \bbP H^{0}(X, \calL(D))$, the dimension of $|D|$ is only well-behaved when $X$ is Gorenstein.  First, we have the following form of the Riemann--Roch Formula.
\begin{proposition} \label{Prop: GorensteinRiemanRoch}
	If $X$ is Gorenstein and $D$ is a generalized divisor, then we have
	\begin{displaymath}
		\dim |D| - \dim |\operatorname{adj} D| = \deg(D)+1-g.
	\end{displaymath}
\end{proposition}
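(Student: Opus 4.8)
The plan is to convert this statement about dimensions of complete linear systems into a statement about the Euler characteristic of a single sheaf, and then to apply Serre duality. First I would use Lemma~\ref{Lemma: DivisorToSectionGor}, which identifies $|D|$ with $\bbP H^{0}(X, \calL(D))$, to rewrite both sides. Since the dimension of a projectivized vector space is one less than the dimension of the space --- with the convention $\dim \emptyset = -1$, which matches $h^{0}=0$ --- we get $\dim |D| = h^{0}(X, \calL(D)) - 1$ and $\dim |\operatorname{adj} D| = h^{0}(X, \calL(\operatorname{adj} D)) - 1$. Subtracting, the $-1$'s cancel and the left-hand side of the formula becomes $h^{0}(X, \calL(D)) - h^{0}(X, \calL(\operatorname{adj} D))$.

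The crux is to identify the second term with $h^{1}(X, \calL(D))$. By the defining property of the adjoint, $\calL(\operatorname{adj} D) = \ShHom(\calL(D), \omega)$, and since global sections of a sheaf $\ShHom$ compute $\Hom$, we have $h^{0}(X, \calL(\operatorname{adj} D)) = \dim \Hom(\calL(D), \omega)$. Because $X$ is Gorenstein it is in particular Cohen--Macaulay of pure dimension $1$, so $\omega$ is a dualizing sheaf and Grothendieck--Serre duality gives $\Ext^{i}(\calF, \omega) \cong H^{1-i}(X, \calF)^{\vee}$ for every coherent sheaf $\calF$. Taking $\calF = \calL(D)$ and $i = 0$ yields $\Hom(\calL(D), \omega) \cong H^{1}(X, \calL(D))^{\vee}$, hence $h^{0}(X, \calL(\operatorname{adj} D)) = h^{1}(X, \calL(D))$. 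Consequently the left-hand side equals $h^{0}(X, \calL(D)) - h^{1}(X, \calL(D)) = \chi(\calL(D))$.

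It then remains to compute $\chi(\calL(D))$. By the definition of the degree of a rank $1$, torsion-free sheaf, $\chi(\calL(D)) = \deg(\calL(D)) + \chi(\calO_{X}) = \deg(\calL(D)) + 1 - g$. Finally I would invoke the Gorenstein identity recorded just after Lemma~\ref{Lemma: DegreeOfDivisor}: combining $I_{-D} \cong \calL(D)$ from Lemma~\ref{Lemma: PropertiesGeneralizedDivisor}\ref{Item: DivisorPropertyDiff} with $\deg(-D) = -\deg(D)$ from Lemma~\ref{Lemma: DegreeOfDivisor}\ref{Lemma: DegreOfDivisorThree} gives $\deg(\calL(D)) = \deg(D)$. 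Substituting produces $\chi(\calL(D)) = \deg(D) + 1 - g$, which is exactly the asserted identity.

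The step requiring the most care is the application of duality, precisely because $\calL(D)$ need not be locally free: on a singular Gorenstein curve it is only guaranteed to be reflexive by Lemma~\ref{Lemma: GorensteinMeansReflexive}, not invertible. I would therefore be careful to invoke duality in its sheaf-theoretic $\Ext$ form, valid for an arbitrary coherent sheaf on a projective Cohen--Macaulay scheme, rather than the line-bundle version $H^{0}(X, \calL(D)^{\vee} \otimes \omega) \cong H^{1}(X, \calL(D))^{\vee}$ that suffices in the smooth case. I would also note that the defining property of $\operatorname{adj} D$ fixes $\calL(\operatorname{adj} D)$ as a sheaf independently of the choice of adjoint, so that $\dim |\operatorname{adj} D|$ is well-defined, and that the convention $\dim \emptyset = -1$ makes the formula hold verbatim even when one or both linear systems are empty.
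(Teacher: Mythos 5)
Your proposal is correct and takes essentially the same route as the paper: the paper's proof consists of citing Hartshorne and observing that, with its definition of degree, the formula is a consequence of coherent duality in the form of Eq.~\eqref{SerreDuality} together with the Gorenstein identity $\deg(\calL(D)) = \deg(D)$, which are exactly the two ingredients you assemble. Your write-up simply makes explicit the computation the paper leaves implicit, namely the reduction $\dim |D| = h^{0}(X, \calL(D)) - 1$ via Lemma~\ref{Lemma: DivisorToSectionGor}, the identification $h^{0}(X, \calL(\operatorname{adj} D)) = h^{1}(X, \calL(D))$, and the Euler characteristic count.
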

\begin{proof}
	This is  \cite[Thm.~1.3, 1.4]{hartshorne86}, and with our  definition of degree, the equation is a consequence of Eq.~\eqref{SerreDuality}.
\end{proof}
We now use the Riemann--Roch Formula to compute $\dim |D|$.

\begin{corollary} \label{Cor: GorensteinGenLinSystem}
	Assume $X$ is Gorenstein.  Then  the equation 
\begin{equation} \label{Eqn: GeneralLinearSystemGorenstein}
	\dim |D|  = 	\begin{cases}
					-1 & \text{if $d <g$;} \\
					d-g& \text{if $d \ge g$.}
				\end{cases}
\end{equation}
	 holds for every generalized divisor $D$ of degree $d > 2g-2$.  
	 
	 Furthermore, if $D$ is a divisor of degree  $d \le 2g-2$, then there exists a degree $0$ Cartier divisor $E$ such that $D+E$ satisfies Eq.~\eqref{Eqn: GeneralLinearSystemGorenstein}.
\end{corollary}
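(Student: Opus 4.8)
The plan is to treat the two assertions separately: the high-degree statement falls directly out of Riemann--Roch, while the low-degree statement reduces, after a twisting computation, to a single lemma about killing sections by twisting with a degree $0$ line bundle.

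For the first assertion, suppose $\deg D = d > 2g-2$. Since $X$ is Gorenstein the adjoint $\operatorname{adj} D = K + (-D)$ exists, and the Riemann--Roch Formula (Proposition~\ref{Prop: GorensteinRiemanRoch}) reads $\dim|D| = (d+1-g) + \dim|\operatorname{adj} D|$. Now $\deg(\operatorname{adj} D) = (2g-2)-d < 0$, whereas every effective generalized divisor has nonnegative degree (Lemma~\ref{Lemma: DegreeOfDivisor}\ref{Lemma: DegreOfDivisorOne}) and linear equivalence preserves degree (Lemma~\ref{Lemma: DegreeOfDivisor}\ref{Lemma: DegreOfDivisorFour}); hence $|\operatorname{adj} D| = \emptyset$, so $\dim|\operatorname{adj} D| = -1$ and $\dim|D| = d-g$. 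When $g \ge 1$ one has $d \ge 2g-1 \ge g$, so this agrees with the stated value; the corner cases $g=0$ and $d<0$ are checked by inspection.

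For the second assertion, the mechanism is that altering $D$ by a degree $0$ Cartier divisor $E$ merely twists the relevant sheaf by a degree $0$ line bundle. From $I_{D+E} \cong I_D \otimes I_E$ (Lemma~\ref{Lemma: PropertiesGeneralizedDivisor}\ref{Item: DivisorPropertySum}), dualizing gives $\calL(D+E) \cong \calL(D) \otimes I_E^{-1}$, and as $E$ ranges over degree $0$ Cartier divisors $I_E^{-1}$ ranges over all of $J^0_X$; note also $\deg(D+E)=d$ by Lemma~\ref{Lemma: DegreeOfDivisor}\ref{Lemma: DegreOfDivisorTwo}. Using $|D'| = \bbP H^0(X, \calL(D'))$ (Lemma~\ref{Lemma: DivisorToSectionGor}) together with the Riemann--Roch reduction above, both ranges collapse to the single claim that \emph{any rank $1$, torsion-free sheaf $F$ of degree $e < g$ admits a degree $0$ line bundle $M$ with $H^0(X, F \otimes M) = 0$.} Granting this, for $d<g$ I would apply it to $F = \calL(D)$ to make $|D+E|$ empty, giving $\dim|D+E| = -1$; for $g \le d \le 2g-2$ I would instead apply it to $F = \calL(\operatorname{adj} D)$, which has degree $2g-2-d < g$, because $\calL(\operatorname{adj}(D+E)) \cong \calL(\operatorname{adj} D) \otimes I_E$, so that $|\operatorname{adj}(D+E)| = \emptyset$ and hence $\dim|D+E| = d-g$ by Riemann--Roch. (For $e<0$ the claim is automatic, since a section of $F \otimes M$ forces $0 \le \deg(F \otimes M) = e$.)

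Proving this twisting claim is the main obstacle, and I would argue with general smooth points, using two standard sublemmas: (i) if $H^0(G) \ne 0$ and $p$ is a general smooth point, then $h^0(G(-p)) = h^0(G)-1$, since some section is nonzero at $p$; and (ii) if $H^0(G)=0$ but $H^1(G) \ne 0$ and $p$ is a general smooth point, then $h^0(G(p))=0$, because the coboundary $k \to H^1(G)$ is injective, being Serre-dual to a nonzero evaluation map for $\ShHom(G,\omega)$. Starting from $s = h^0(F)$, I subtract $s$ general smooth points (sublemma (i)) to reach a sheaf with no sections, then add $s$ general smooth points back; the result is $F \otimes M$ for a degree $0$ line bundle $M$. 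The delicate step is keeping $h^0=0$ throughout the addition phase: each sheaf occurring just before an addition has degree at most $e-1 \le g-2$, hence $h^1 = g-1-\deg > 0$, so sublemma (ii) applies at every stage and never pushes $h^0$ back up, while the critical degree $g-1$ (where adding a point would raise $h^0$) is reached only as the final degree, after the last addition. A cleaner but heavier alternative is a dimension count: the locus $\{M \in J^0_X : h^0(F\otimes M) > 0\}$ is dominated by the $e$-dimensional family of length-$e$ quotients of $F$ (via $[F \twoheadrightarrow Q] \mapsto [\ker]^{-1}$), hence has dimension $e < g = \dim J^0_X$ and cannot fill the irreducible Jacobian.
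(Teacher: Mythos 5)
Your proposal is correct, and while your first half coincides with the paper's argument (the adjoint has negative degree, so $|\operatorname{adj} D|=\emptyset$, and Riemann--Roch finishes), your second half takes a genuinely different route. The paper goes \emph{up then down}: it adds a Cartier divisor $A$ consisting of $e$ distinct smooth points so that $\deg(D+A)>2g-2$, where the already-proven high-degree case pins down $h^{0}(\calL(D+A))$ exactly, and then subtracts $e$ general smooth points one at a time; this needs only your sublemma (i), plus the trivial remark that $h^{0}$ stays $0$ once it reaches $0$, and the single divisor $E=A-p_{1}-\dots-p_{e}$ handles the regimes $d<g$ and $g\le d\le 2g-2$ simultaneously. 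You instead go \emph{down then up}: you isolate a standalone twisting claim, apply it to $\calL(D)$ or to $\calL(\operatorname{adj} D)$ depending on the regime, and prove it by first killing sections and then restoring the degree, which forces you to invoke the duality-based sublemma (ii) (injectivity of the coboundary $k\to H^{1}(G)$ at a general point, dual to evaluation of a nonzero section of $\ShHom(G,\omega)$ via $\Hom(G,\omega)\cong H^{1}(G)^{\vee}$). The paper's ordering avoids any duality input in the induction by recycling the first part of the corollary; your ordering buys a clean, quotable statement --- a general degree-$0$ twist of a rank $1$, torsion-free sheaf of degree $<g$ has no sections --- which is the natural generic-vanishing formulation and treats both target values of $\dim|D+E|$ uniformly.

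One caution about your ``cleaner but heavier'' alternative: as stated, the assertion that the length-$e$ quotients of $F$ form an $e$-dimensional family is not true of the full Quot scheme in general. For non-planar Gorenstein singularities (these exist, e.g.\ $k[[t^{4},t^{5},t^{6}]]$), Quot and Hilbert schemes of points on $X$ can have components of dimension exceeding $e$ --- this is exactly the pathology behind the Kleiman--Kleppe reducibility results quoted in the paper. Your map $[F\twoheadrightarrow Q]\mapsto[\ker]^{-1}$ only makes sense on, and only uses, the locus where the kernel is a line bundle, and \emph{that} locus does have dimension $\le e$ (its tangent space at a point is $\Hom(\ker,Q)$, of dimension equal to the length of $Q$), so the dimension count survives; but the restriction to the locally-free-kernel locus and the tangent-space bound would both need to be made explicit for that alternative to be a proof.
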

\begin{remark}
	In Section~\ref{Section: Abel Map}, we will introduce the moduli space of effective generalized divisors of degree $d$, called the Hilbert scheme $\operatorname{Hilb}^{d}_{X}$.  The present corollary implies that the locus of divisors satisfying  Eq.~\eqref{Eqn: GeneralLinearSystemGorenstein} is dense in $\Hilb^{d}_{X}$ (i.e.~Eq.~\eqref{Eqn: GeneralLinearSystemGorenstein} is satisfied by a general effective $D$ of degree $d$). 
\end{remark}
\begin{proof}

Suppose first that $D$ is a generalized divisor of degree $d>2g-2$.  Then the degree of $\operatorname{adj} D$ is $2g-2-d<0$, and it follows from degree considerations that the only homomorphism $I_{\operatorname{adj} D} \to \calO_{X}$ is the zero homomorphism.  In other words, $|\operatorname{adj} D|=\emptyset$, and the claim immediately follows  from the Riemann--Roch Formula.  

Now suppose that $D$ is a given generalized divisor of degree $d \le 2g-2$.  Choose an integer $e$ large enough so that $d+e > 2g-2$ and then choose a Cartier divisor $A$ that is the sum of $e$ distinct points lying in the smooth locus $X^{\text{sm}}$.  We claim that there exist distinct points $p_1, \dots, p_e \in X^{\text{sm}}$ such that $D+A-p_1-\dots-p_e$ satisfies Eq.~\eqref{Eqn: GeneralLinearSystemGorenstein}.  

We construct the $p_i$'s by induction.  It is enough to show that if $B$ is a generalized divisor with $\dim |B|>1$, then there exists a point $p \in X^{\text{sm}}$ with  $h^{0}(X, \calL(B-p))=h^{0}(X, \calL(B))-1$.  Fix a nonzero section $\sigma \in H^{0}(X, \calL(B))$.  Then the image $\sigma(p) \in k(p) \otimes \calL(B)$ of $\sigma$ in the fiber $\calL(B)$ at $p$ is zero for only finitely many points $p$.  Pick a point $p$ in the smooth locus of $X$ such that $\sigma(p) \ne 0$.  Then $h^{0}(X, \calL(B-p))=h^{0}(X, \calL(B))-1$ because $H^{0}(X, \calL(F-p))$ is the kernel of the nonzero homomorphism $H^{0}(X, \calL(F)) \to k(p) \otimes \calL(F)$.  This completes the proof.
\end{proof}

The corollary is false without the Gorenstein assumption. 

\begin{example} \label{Example: LargeNonspecial}
	Let $X$ be the non-Gorenstein genus $2$ curve from Example~\ref{Example: Dualizing}.  If $p_0 \in X$ is the singularity, then for any $4$ general points $p_1, p_2, q_1, q_2 \in X$, the divisors $p_0+p_1+p_2$ and $p_0+q_1+q_2$ are linearly equivalent. Indeed, assume the points all lie in $X_1$ and are respectively given by the ideals $(t-a_1), (t-a_2), (t-b_1), (t-b_2)$.  Then 
\begin{displaymath}
	p_0+p_1+p_2 = p_0+q_1+q_2 + \operatorname{div}(f) \text{ for $f = \frac{(t-a_1)(t-a_2)}{(t-b_1)(t-b_2)}$.}
\end{displaymath}
This shows that $\dim |p_0+p_1+p_2| \ge 2 >1=d-g$ for $d = \deg(p_0 + p_1 + p_2)$ even though $d>2g-2$.  
\end{example}

A more satisfactory theory of linear systems on a non-Gorenstein curve can be constructed by changing the definition of divisor.  

\subsection{Generalized $\omega$-divisors}

We now define generalized $\omega$-divisors and develop their properties in analogy with generalized divisors.  The definition is as follows.

\begin{definition}
	Let $\omega_{\calK}$ be the sheaf of rational $1$-forms.  A \textbf{generalized $\omega$-divisor} $D_{\omega}$ on $X$ is a nonzero subsheaf $I_{D_{\omega}} \subset \omega_{\calK}$ that is a coherent $\calO_{X}$-module.  We say that a generalized $\omega$-divisor $D_{\omega}$ is \textbf{Cartier} if $I_{D_{\omega}}$ is a line bundle.  A generalized $\omega$-divisor is \textbf{effective} if $I_{D_{\omega}} \subset \omega$.  
\end{definition}
(We denote generalized $\omega$-divisors with a subscript $\omega$ to avoid confusing them with generalized divisors.)

When $X$ is Gorenstein, the definition of a generalized $\omega$-divisor is essentially equivalent to the definition of a generalized divisor.  Indeed, for such an $X$, tensoring with the dualizing sheaf $\omega$ defines a bijection  between the set of generalized $\omega$-divisors and generalized divisors that preserves properties such as effectiveness.  No such bijection exists when $X$ is non-Gorenstein, and the remainder of this section is devoted to demonstrating to the reader that generalized $\omega$-divisors behave better than generalized divisors.

Unlike effective generalized divisors, effective generalized $\omega$-divisors do not correspond to closed subschemes of $X$.  However, given a point $p \in X$, the subsheaf $I_{p} \cdot \omega \subset \omega$ of $1$-forms that vanish at $p$ defines an effective $\omega$-divisor that we denote by $p_{\omega}$.  (Later in this section we will define the degree of an $\omega$-divisor, and the reader is warned that $p_{\omega}$ may not have degree $1$ when $p$ is a singularity.)  The submodule $\omega \subset \calK_{\omega}$ defines an effective generalized $\omega$-divisor that we denote $0_{\omega}$.  The $\omega$-divisor $\operatorname{div}(\eta)$ associated to a rational $1$-form $\eta \in \calK_{\omega}$ is defined by setting $\operatorname{div}(\eta) := \calO_{X} \cdot \eta \subset \calK_{\omega}$.

We can define basic operations on generalized $\omega$-divisors in analogy with the operations we defined on generalized divisors, although there are a few complications. 
\begin{definition}
	We define the \textbf{sum} $D+E_{\omega}$ of a generalized divisor $D$ and a generalized $\omega$-divisor $E_{\omega}$ by setting $I_{D+E_{\omega}} \subset \calK_{\omega}$ equal to the subsheaf generated by elements of the form $f \eta$ with $f$ a local section of $I_{D}$ and $\eta$ a local section of $I_{E_{\omega}}$.  (The sum of two generalized $\omega$-divisors is not well-defined.)  The \textbf{negation} $n(D_{\omega})$ of a generalized $\omega$-divisor is the generalized divisor defined by setting $I_{n(D_{\omega})} \subset \calK$ equal to the subsheaf generated by elements $f$ that have the property that $f \cdot I_{D_{\omega}} \subset \omega$.  Swapping the roles of $\calK$ and $\calK_{\omega}$, we get the definition of the negation $n(D)$ of a generalized divisor.
\end{definition}

\begin{lemma}
	The sum and negation operations have the following properties:
\begin{enumerate}
	\renewcommand{\theenumi}{(\alph{enumi})}
	\renewcommand{\labelenumi}{\theenumi}
	
	\item sum is associative (i.e.~$(D+E)+F_{\omega} = D+(E+F_{\omega})$;
	\item $0+D_{\omega}=D_{\omega}$;
 	\item $D+n(D)=0_{\omega}$ for every Cartier divisor $D$ and $n(D_{\omega})+D_{\omega}=0_{\omega}$ for every Cartier $\omega$-divisor $D_{\omega}$;
	\item $n(D+E_{\omega}) = n(E_{\omega}) + n(D)$ provided $D$ or $E_{\omega}$ is Cartier.  
\end{enumerate}
\end{lemma}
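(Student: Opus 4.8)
The plan is to imitate the proof of Lemma~\ref{Lemma: DivisorSum} (Hartshorne's \cite[Prop.~2.2]{hartshorne07}): each of the four identities asserts the equality of two coherent subsheaves of $\calK$ or of $\calK_\omega$, and such an equality may be checked stalk by stalk, where it becomes a statement about $\calO_{X,x}$-submodules of the function field $k(X)$ and of the one-dimensional $k(X)$-vector space of rational $1$-forms. First I would dispose of the two formal parts. For associativity, recall that $I_{D+E}$ is generated by the products $fg$ with $f$ a section of $I_{D}$ and $g$ a section of $I_{E}$; hence both $I_{(D+E)+F_\omega}$ and $I_{D+(E+F_\omega)}$ are the subsheaf of $\calK_\omega$ generated by all triple products $fg\eta$ with $f\in I_{D}$, $g\in I_{E}$, $\eta\in I_{F_\omega}$, so the two sides coincide. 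For $0+D_\omega=D_\omega$, observe that $I_{0}=\calO_{X}$, so the subsheaf generated by the products $f\eta$ with $f\in\calO_{X}$ and $\eta\in I_{D_\omega}$ is just $\calO_{X}\cdot I_{D_\omega}=I_{D_\omega}$. Neither of these two parts uses a Cartier hypothesis, mirroring the first two parts of Lemma~\ref{Lemma: DivisorSum}.

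The substance lies in parts~(c) and~(d), where the hypothesis that one divisor is Cartier is essential. After localizing, a Cartier divisor has a locally free ideal of rank $1$, so I may write $I_{D}=f\cdot\calO_{X}$ for a single $f\in\calK^{\ast}$, and respectively $I_{D_\omega}=\eta_{0}\cdot\calO_{X}$ for a single rational $1$-form $\eta_{0}$. The key point is that, tested against a principal generator, the ideal-quotient condition defining negation becomes honest division: the sections $\eta$ of $I_{n(D)}$ are exactly those with $\eta f\in\omega$, that is, $I_{n(D)}=f^{-1}\omega$ locally, and likewise $I_{n(D_\omega)}=\{g\in\calK:\ g\eta_{0}\in\omega\}=\eta_{0}^{-1}\omega$ inside $\calK$. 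Substituting these local descriptions, the identity $D+n(D)=0_\omega$ becomes $(f\calO_{X})(f^{-1}\omega)=\omega$ and $n(D_\omega)+D_\omega=0_\omega$ becomes $(\eta_{0}^{-1}\omega)(\eta_{0}\calO_{X})=\omega$; in each case the principal factor cancels and one recovers $I_{0_\omega}=\omega$. For part~(d) I would run the same cancellation, treating the two cases ``$D$ Cartier'' and ``$E_\omega$ Cartier'' by making whichever factor is Cartier principal: with that single generator pulled out, computing $I_{n(D+E_\omega)}$ by the colon condition produces exactly the principal multiple of $I_{n(E_\omega)}$ that the right-hand side generates, so the two agree.

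The main obstacle is the bookkeeping around negation, which is defined by a colon (ideal-quotient) condition rather than by generation by products, so that sum and negation do not interact naively: one must verify that, once the distinguished divisor is made locally principal, the operation ``generate by products'' and the operation ``take the sections $f$ with $f\cdot(-)\subset\omega$'' become mutually inverse up to the principal factor. This interplay genuinely fails without the Cartier hypothesis, precisely because a rank $1$, torsion-free sheaf on a non-Gorenstein curve need not be reflexive --- compare Example~\ref{Example: NonReflexiveDualizing} and the breakdown of the non-Cartier cases of Lemma~\ref{Lemma: DivisorSum}. Granting the principal-factor cancellation, each of (c) and (d) collapses to an elementary fractional-ideal identity in which the generator drops out, and the stalkwise equalities assemble to the asserted equalities of subsheaves.
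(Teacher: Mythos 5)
Your handling of parts (a), (b), and (c) is correct, and it is exactly the argument the paper has in mind: the paper's entire proof is the sentence that the argument is analogous to Lemma~\ref{Lemma: DivisorSum} (itself only a citation to \cite[Prop.~2.2]{hartshorne07}), and that analogy is precisely your stalkwise computation in which the Cartier hypothesis makes the relevant ideal principal, $I_{D}=f\calO_{X}$ or $I_{E_{\omega}}=\eta_{0}\calO_{X}$, after which the generator cancels.

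Part (d) is where there is a genuine gap, and it sits exactly at the words ``so the two agree.'' Your opening framing --- that each identity equates two subsheaves of $\calK$ or of $\calK_{\omega}$ --- already fails for (d): $n(D+E_{\omega})$ is a generalized divisor, a subsheaf of $\calK$, while $n(E_{\omega})+n(D)$ is the sum of the generalized divisor $n(E_{\omega})$ with the generalized $\omega$-divisor $n(D)$, hence by definition a subsheaf of $\calK_{\omega}$. Run your cancellation honestly with $I_{D}=f\calO_{X}$: the colon condition gives $I_{n(D+E_{\omega})}=f^{-1}I_{n(E_{\omega})}\subset\calK$, whereas the right-hand side generates $I_{n(E_{\omega})}\cdot I_{n(D)}=f^{-1}\,I_{n(E_{\omega})}\,\omega\subset\calK_{\omega}$, because by your own computation in (c) one has $I_{n(D)}=f^{-1}\omega$, not $f^{-1}\calO_{X}$. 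In asserting agreement you have tacitly substituted the generalized divisor $-D$ of Definition~\ref{Def: SumAndMinus} for the $\omega$-divisor $n(D)$; the two sides differ by a twist by $\omega$, which is an invertible factor (hence harmless) on a Gorenstein curve, but is a real discrepancy on the non-Gorenstein curves this subsection exists to treat. The fault is partly in the printed statement, which is ill-typed as it stands, but a blind proof is exactly where this must surface, and yours papers over it.

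Concretely, what your cancellation does prove is the repaired identity $n(D+E_{\omega})+0_{\omega}=n(E_{\omega})+n(D)$, and that holds in both Cartier cases. The other natural repair, $n(D+E_{\omega})=-D+n(E_{\omega})$ as a sum of generalized divisors (the true analogue of Lemma~\ref{Lemma: DivisorSum}\ref{Lemma: DivisorSumTwo}), holds when $D$ is Cartier but is \emph{false} when only $E_{\omega}$ is Cartier: on the curve of Example~\ref{Example: Dualizing} take $E_{\omega}=\operatorname{div}(dt)$ and $I_{D}=\{h\in\calK : h\,dt\in\omega\}$; then $D+E_{\omega}=0_{\omega}$, so $I_{n(D+E_{\omega})}=\{g : g\omega\subset\omega\}$ contains $1$, while $I_{-D}\cdot I_{n(E_{\omega})}=\ShHom(I_{D},\calO_{X})\cdot I_{D}$ has stalk at the singularity spanned by monomials $t^{n}$ with $n\ge 3$, so it does not contain $1$. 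The same case exposes your uniform phrase ``produces exactly the principal multiple of $I_{n(E_{\omega})}$'': when the Cartier factor is $E_{\omega}$, pulling $\eta_{0}$ out of the colon yields $(I_{n(E_{\omega})}:I_{D})=\ShHom(I_{D},I_{n(E_{\omega})})$, which is not a principal multiple of anything since $I_{D}$ need not be principal, and which genuinely differs from the product $I_{-D}\cdot I_{n(E_{\omega})}$. So (c) is fine, but (d) needs both a corrected statement and a case analysis more careful than symmetric ``whichever factor is Cartier'' cancellation.
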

\begin{proof}
		The proof is analogous to the proof of Lemma~\ref{Lemma: DivisorSum}.  We leave the details to the interested reader.
\end{proof}

\begin{definition}
	Two generalized $\omega$-divisors $D_{\omega}$ and $E_{\omega}$ are said to be \textbf{linearly equivalent} if there exists a rational function $f$ such that $D_{\omega} = \operatorname{div}(f) + E_{\omega}$.  The set of all effective generalized $\omega$-divisors linearly equivalent to a given generalized $\omega$-divisor is written $|D_{\omega}|$ and called the associated \textbf{complete linear system}.
\end{definition}

\begin{lemma}
The following relations between generalized $\omega$-divisors and rank $1$, torsion-free sheaves hold:
\begin{enumerate}
	\renewcommand{\theenumi}{(\alph{enumi})}
	\renewcommand{\labelenumi}{\theenumi}

	\item two generalized $\omega$-divisors $D_{\omega}$ and $E_{\omega}$ are linearly equivalent if and only if $I_{D_{\omega}} \cong I_{E_{\omega}}$;
	\item every rank $1$, torsion-free sheaf is isomorphic to $I_{D_{\omega}}$ for some generalized $\omega$-divisor $D_{\omega}$;
	\item $I_{D+E_{\omega}} \cong I_{D} \otimes I_{E_{\omega}}$ provided either $D$ or $E_{\omega}$ is Cartier;
	\item $I_{n(D)} = \ShHom(I_{D}, \omega)$ and $I_{n(E_{\omega})} = \ShHom(I_{E_{\omega}}, \omega)$.
\end{enumerate}	
\end{lemma}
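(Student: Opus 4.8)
The plan is to mirror the proof of Lemma~\ref{Lemma: PropertiesGeneralizedDivisor}, exploiting the one structural fact that drives the whole theory: the sheaf $\calK_{\omega}$ of rational $1$-forms is a one-dimensional vector space over the function field, i.e.\ a free $\calK$-module of rank $1$. Consequently every generalized $\omega$-divisor $I_{D_{\omega}} \subset \calK_{\omega}$, the dualizing sheaf $\omega \subset \calK_{\omega}$, and each $\ShHom$ occurring below is a rank $1$ torsion-free sheaf that embeds into its generic fibre, where all $\calK$-linear maps are multiplication by scalars. I would use this repeatedly: a $\calK$-linear automorphism of $\calK_{\omega}$ is multiplication by an element of $\calK^{\ast}$, while $\Hom_{\calK}(\calK, \calK_{\omega}) = \calK_{\omega}$ and $\Hom_{\calK}(\calK_{\omega}, \calK_{\omega}) = \calK$.

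For part (a), in the forward direction I would unwind the definition of the sum: if $D_{\omega} = \operatorname{div}(f) + E_{\omega}$, then $I_{D_{\omega}}$ is generated by the sections $f\eta$ with $\eta \in I_{E_{\omega}}$, so $I_{D_{\omega}} = f \cdot I_{E_{\omega}}$ and multiplication by $f$ is the desired isomorphism. Conversely, an $\calO_{X}$-linear isomorphism $I_{E_{\omega}} \cong I_{D_{\omega}}$ localizes at the generic point to a $\calK$-linear automorphism of $\calK_{\omega}$, hence to multiplication by some $f \in \calK^{\ast}$; since both sheaves are torsion-free they inject into this generic fibre, so the isomorphism is globally multiplication by $f$, giving $I_{D_{\omega}} = f \cdot I_{E_{\omega}}$ and thus $D_{\omega} = \operatorname{div}(f) + E_{\omega}$. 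For part (b) the cleanest route is to reduce to Lemma~\ref{Lemma: PropertiesGeneralizedDivisor}\ref{Item: DivisorPropertyExist}: fix a nonzero rational $1$-form $\eta_{0}$, so that multiplication by $\eta_{0}$ is a $\calK$-module isomorphism $\calK \xrightarrow{\sim} \calK_{\omega}$ carrying coherent subsheaves of $\calK$ to coherent subsheaves of $\calK_{\omega}$. Given a rank $1$ torsion-free $I$, Lemma~\ref{Lemma: PropertiesGeneralizedDivisor}\ref{Item: DivisorPropertyExist} produces an embedding $I \cong I_{D} \subset \calK$, and then $\eta_{0} I_{D} \subset \calK_{\omega}$ is $I_{D_{\omega}}$ for the required $\omega$-divisor.

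Part (c) is the same argument as for Lemma~\ref{Lemma: PropertiesGeneralizedDivisor}\ref{Item: DivisorPropertySum}. There is a natural surjection $I_{D} \otimes I_{E_{\omega}} \to I_{D+E_{\omega}}$ induced by $f \otimes \eta \mapsto f\eta$, and it suffices to check injectivity, which is local. Whichever of $D$, $E_{\omega}$ is Cartier has a principal $I$ locally, and once one tensor factor is free of rank $1$ the map becomes multiplication by a single nonzero element, which is injective. For part (d) I would identify each $\ShHom$ with the corresponding negation directly from the definitions. Since $I_{D}$ is rank $1$ torsion-free, $\ShHom(I_{D}, \omega)$ is torsion-free with generic fibre $\Hom_{\calK}(\calK, \calK_{\omega}) = \calK_{\omega}$, so it embeds into $\calK_{\omega}$; a local homomorphism $\phi \colon I_{D} \to \omega$ agrees on the generic fibre with multiplication by a unique $\eta \in \calK_{\omega}$, and $\eta$ lies in the image precisely when $\eta I_{D} \subset \omega$, which is the defining condition for $I_{n(D)}$. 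The identification $\ShHom(I_{E_{\omega}}, \omega) = I_{n(E_{\omega})}$ is symmetric, using $\Hom_{\calK}(\calK_{\omega}, \calK_{\omega}) = \calK$ to embed into $\calK$ instead.

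The one point requiring genuine care, rather than a routine transcription of the $\calO_{X}$-case, is that $\omega$ is not locally free when $X$ is non-Gorenstein, so I must ensure every step uses only that $\omega$ is a torsion-free rank $1$ subsheaf of $\calK_{\omega}$ and never that it is invertible. This is exactly what permits the sheaves $\ShHom(I_{D},\omega)$ and $\ShHom(I_{E_{\omega}},\omega)$ in part (d) to be treated as subsheaves of their one-dimensional generic fibres; the arguments for (a), (b), and (c) never invoke $\omega$ beyond its ambient field $\calK_{\omega}$, so they transfer without change. I therefore expect part (d), and specifically the verification that the generic-fibre extension of a homomorphism into the non-invertible sheaf $\omega$ recovers exactly the negation subsheaf, to be the only real obstacle.
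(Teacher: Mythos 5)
Your proof is correct, and all four parts go through as written. It differs from the paper's treatment mainly in how much is left to the literature: the paper disposes of this lemma in one sentence, declaring it ``entirely analogous'' to Lemma~\ref{Lemma: PropertiesGeneralizedDivisor}, whose own proof in turn cites Hartshorne for parts (a), (b), and (d) and only argues part (c) directly --- by exactly the local-principality observation you use, so your (c) coincides with the paper's. For (a) and (d) you give self-contained arguments resting on the single structural fact that $\calK_{\omega}$ is a one-dimensional $\calK$-vector space, so that $\calO_{X}$-homomorphisms between nonzero coherent subsheaves of $\calK$ and $\calK_{\omega}$ are determined by, and recovered from, their generic fibres; this is sound, since such subsheaves inject into the (constant) ambient sheaves. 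Your part (b) is a genuinely different and efficient step: rather than re-proving the embedding statement in the $\omega$-setting, you transport the already-established Lemma~\ref{Lemma: PropertiesGeneralizedDivisor}\ref{Item: DivisorPropertyExist} across the $\calK$-module isomorphism $\calK \cong \calK_{\omega}$ given by multiplication by a fixed nonzero form $\eta_{0}$. What the paper's route buys is brevity and a pointer to the general theory in Hartshorne's papers; what yours buys is a proof verifiable entirely within the paper, which moreover makes explicit that no step uses anything about $\omega$ beyond its being a rank $1$ torsion-free subsheaf of $\calK_{\omega}$ --- precisely the point, as you observe, where the non-Gorenstein case could have caused trouble.
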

\begin{proof}
	The proof is entirely analogous to the proof of Lemma~\ref{Lemma: PropertiesGeneralizedDivisor}.
\end{proof}
Proceeding as we did with generalized divisors, we now define the canonical and adjoint divisors.

\begin{definition}
	Write $I^{\ast} := \ShHom(I, \omega)$ for the \textbf{$\omega$-linear dual} of a coherent $\calO_{X}$-module $I$.  The sheaf $\calM(D_{\omega})$ associated to a generalized $\omega$-divisor is defined by $\calM(D_{\omega})  := I_{D_{\omega}}^{\ast}$.  An \textbf{adjoint generalized $\omega$-divisor} $\operatorname{adj} D_{\omega}$ of a generalized $\omega$-divisor $D_{\omega}$ is a generalized $\omega$-divisor satisfying $\calM(\operatorname{adj} D_{\omega}) = \ShHom(\calM(D_{\omega}), \omega)$.  A \textbf{canonical generalized $\omega$-divisor} $K_{\omega}$ is an adjoint $\omega$-divisor of $0_{\omega}$ (so $\calM(K_{\omega}) = \omega$).
\end{definition}

The lemma below implies that an adjoint $\omega$-divisor of a generalized $\omega$-divisor exists and is unique up to linear equivalence.  In particular, every curve admits a canonical $\omega$-divisor $K_{\omega}$.  We do not study the relation between $K_{\omega}$ and the canonical map here.

\begin{lemma} \label{Lemma: StarReflexive}
	A rank $1$, torsion-free sheaf $I$ is $\omega$-reflexive.  That is, the natural map
\begin{displaymath}
	I \to (I^{\ast})^{\ast}
\end{displaymath}
is an isomorphism.  More generally, $n(n(D_{\omega}))=D_{\omega}$ for every generalized $\omega$-divisor $D_{\omega}$ and $n(n(D))=D$ for every generalized divisor $D$.
\end{lemma}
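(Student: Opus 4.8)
The plan is to reduce all three claims to the single assertion that the biduality map $I \to (I^{\ast})^{\ast}$ is an isomorphism, and then to prove that assertion by the local duality theory of the canonical module.

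First I would handle the reduction. By part~(d) of the preceding lemma (the $\omega$-analogue of Lemma~\ref{Lemma: PropertiesGeneralizedDivisor}), both negation operations are computed by $\omega$-duality: $I_{n(D)} = \ShHom(I_{D}, \omega)$ for a generalized divisor $D$, and $I_{n(E_{\omega})} = \ShHom(I_{E_{\omega}}, \omega)$ for a generalized $\omega$-divisor $E_{\omega}$. Composing these identities gives $I_{n(n(D_{\omega}))} = (I_{D_{\omega}}^{\ast})^{\ast}$ and $I_{n(n(D))} = (I_{D}^{\ast})^{\ast}$, where $(-)^{\ast} = \ShHom(-,\omega)$. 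Moreover, unwinding the definition of negation shows that in each case the natural biduality map is realized as an inclusion of subsheaves: if $\eta \in I_{D_{\omega}}$ and $f \in I_{n(D_{\omega})}$, then $f\eta \in \omega$ by the very definition of $n(D_{\omega})$, so $\eta \in I_{n(n(D_{\omega}))}$, giving $I_{D_{\omega}} \subset I_{n(n(D_{\omega}))}$ inside $\calK_{\omega}$; the same computation with the roles of $\calK$ and $\calK_{\omega}$ swapped gives $I_{D} \subset I_{n(n(D))}$ inside $\calK$. Thus the equalities $n(n(D_{\omega})) = D_{\omega}$ and $n(n(D)) = D$ say exactly that these inclusions are equalities, i.e.\ that $I \to (I^{\ast})^{\ast}$ is an isomorphism.

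It remains to prove that $I \to (I^{\ast})^{\ast}$ is an isomorphism for every rank $1$, torsion-free sheaf $I$. This is local, so I would check it on stalks. Let $R := \calO_{X,p}$, which is a one-dimensional Cohen--Macaulay local domain since $X$ is an integral curve, and let $\omega_{p}$ be its canonical module. The stalk $M := I_{p}$ is finitely generated and torsion-free of rank $1$; as $\dim R = 1$ and the only associated prime of $M$ is $(0) \neq \mathfrak{m}$, torsion-freeness forces $\operatorname{depth} M = 1 = \dim R$, so $M$ is a maximal Cohen--Macaulay $R$-module.

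Finally I would invoke the duality theory of the canonical module: over a Cohen--Macaulay local ring with canonical module $\omega_{p}$, the functor $\Hom_{R}(-,\omega_{p})$ is an exact duality on the category of maximal Cohen--Macaulay modules, and in particular the natural map $M \to \Hom_{R}(\Hom_{R}(M,\omega_{p}),\omega_{p})$ is an isomorphism (see Section~\ref{Section: Dualizing}, or Bruns--Herzog, \emph{Cohen--Macaulay Rings}, Thm.~3.3.10). Checking the biduality map on each stalk then proves that $I \to (I^{\ast})^{\ast}$ is an isomorphism. The one genuinely nontrivial ingredient is this duality theorem, whose proof rests on the vanishing $\ShExt^{i}(I,\omega) = 0$ for $i>0$ together with local duality; this is the step that truly uses that $\omega$ is a \emph{dualizing} sheaf and not merely a rank $1$ sheaf, and I expect it to be the main obstacle. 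Everything else is formal bookkeeping with subsheaves of $\calK$ and $\calK_{\omega}$.
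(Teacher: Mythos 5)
Your proof is correct, and it is worth noting that the paper itself contains no argument at all: it disposes of this lemma by citing Hartshorne (2007), Lem.~1.4. So your write-up is in effect a reconstruction of the proof the paper outsources, and it follows the same underlying strategy as Hartshorne's. Your reduction of the two divisor-theoretic identities to sheaf-theoretic biduality is sound and non-circular: the identification $I_{n(D)} = \ShHom(I_{D},\omega)$ is elementary (a nonzero homomorphism from a subsheaf of $\calK$ into $\omega$ extends $\calK$-linearly to $\calK \to \calK_{\omega}$, hence is multiplication by a rational form), and your observation that under this identification the abstract biduality map becomes the inclusion $I_{D} \subset I_{n(n(D))}$ of subsheaves is exactly the point needed to make the three claims equivalent. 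The local algebra is also right: $\calO_{X,p}$ is a one-dimensional Noetherian local domain, hence Cohen--Macaulay; a finitely generated torsion-free module over it has no associated prime equal to the maximal ideal, hence has depth $1$ and is maximal Cohen--Macaulay; and biduality with respect to the canonical module is an isomorphism on maximal Cohen--Macaulay modules (Bruns--Herzog, Thm.~3.3.10). The one step you pass over silently is the identification of the stalk $\omega_{p}$ of the residue-theoretic dualizing sheaf of Definition~\ref{Definition: DualizingMod} with the canonical module of $\calO_{X,p}$ in the local-algebra sense; this is true and standard (it follows from localization of dualizing complexes, i.e.\ local duality, and is where the global Coherent Duality Theorem of Section~\ref{Section: Dualizing} genuinely enters), but it deserves a sentence, since without it Thm.~3.3.10 says nothing about $\omega$. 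What your route buys over the bare citation is transparency: it exhibits the lemma as a purely local statement, valid for an arbitrary curve with no Gorenstein hypothesis, whose entire content is canonical-module duality for maximal Cohen--Macaulay modules.
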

\begin{proof}
	This is  \cite[Lem.~1.4]{hartshorne07}.  
\end{proof}

We now develop the theory of the complete linear system associated to a $\omega$-divisor.
\begin{lemma}
	Let $D_{\omega}$ be  a generalized $\omega$-divisor.  Then the rule that  sends a nonzero global section of $\calM(D_{\omega})$ to the image of the corresponding homomorphism $I_{D_{\omega}} \to \omega$  defines a bijection
	$$
		\bbP H^{0}(X, \calM(D_{\omega})) \cong |D_{\omega}|.
	$$ 
\end{lemma}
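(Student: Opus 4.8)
The plan is to imitate the proof of Lemma~\ref{Lemma: DivisorToSectionGor} line by line, replacing the structure sheaf $\calO_{X}$ by the dualizing sheaf $\omega$ everywhere. First I would use the natural isomorphism $H^{0}(X, \ShHom(I_{D_{\omega}}, \omega)) \cong \Hom(I_{D_{\omega}}, \omega)$ --- valid for any coherent sheaf --- to identify a nonzero global section of $\calM(D_{\omega}) = I_{D_{\omega}}^{\ast} = \ShHom(I_{D_{\omega}}, \omega)$ with a nonzero homomorphism $\phi \colon I_{D_{\omega}} \to \omega$. The rule in the statement then assigns to $\phi$ its image $\phi(I_{D_{\omega}}) \subset \omega$.

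Next I would verify that this rule is well-defined, i.e.~that $\phi(I_{D_{\omega}})$ is an effective $\omega$-divisor lying in $|D_{\omega}|$. The key input is that $\omega$, being the dualizing sheaf of the integral curve $X$, is a rank $1$, torsion-free sheaf. Consequently a nonzero $\phi$ is injective by the argument already used: it is generically nonzero because $\omega$ is torsion-free, hence generically an isomorphism since source and target both have rank $1$, so its kernel is a torsion subsheaf of the torsion-free sheaf $I_{D_{\omega}}$ and therefore vanishes. The image $\phi(I_{D_{\omega}})$ is then a nonzero coherent subsheaf of $\omega \subset \omega_{\calK}$, i.e.~an effective generalized $\omega$-divisor $E_{\omega}$, and the isomorphism $I_{D_{\omega}} \cong \phi(I_{D_{\omega}}) = I_{E_{\omega}}$ shows that $E_{\omega}$ is linearly equivalent to $D_{\omega}$ by the linear-equivalence criterion of the preceding lemma.

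Surjectivity is then immediate: given $E_{\omega} \in |D_{\omega}|$, one has $I_{E_{\omega}} \subset \omega$ together with an isomorphism $I_{D_{\omega}} \cong I_{E_{\omega}}$, and composing this isomorphism with the inclusion $I_{E_{\omega}} \hookrightarrow \omega$ yields a homomorphism $I_{D_{\omega}} \to \omega$ with image $I_{E_{\omega}}$. For injectivity on $\bbP H^{0}$, I would repeat the automorphism argument: if nonzero sections $\sigma_{1}, \sigma_{2}$ correspond to homomorphisms $\phi_{1}, \phi_{2}$ with the same image, then $\alpha := \phi_{2}^{-1} \circ \phi_{1}$ is an automorphism of $I_{D_{\omega}}$, and since the only automorphisms of a rank $1$, torsion-free sheaf are multiplication by scalars in $k^{\ast}$ (by \cite[Cor.~5.3, Lem.~5.4]{altman80}), we obtain $\phi_{1} = c\,\phi_{2}$ for some $c \in k^{\ast}$ and hence $\sigma_{1} = c\,\sigma_{2}$, so the two sections define the same point of $\bbP H^{0}(X, \calM(D_{\omega}))$.

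I expect no genuine obstacle here, since every ingredient of the generalized-divisor argument survives the replacement of $\calO_{X}$ by $\omega$. The single point meriting care is the verification that $\omega$ is rank $1$ and torsion-free --- this is exactly what makes the injectivity of a nonzero $\phi$ and the scalar-automorphism statement applicable --- but this is a standard property of the dualizing sheaf of an integral curve (cf.~the appendix, Section~\ref{Section: Dualizing}). Given how closely this mirrors Lemma~\ref{Lemma: DivisorToSectionGor}, I anticipate the paper will simply record that the proof is analogous, as it did for the neighbouring lemmas on generalized $\omega$-divisors.
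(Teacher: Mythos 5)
Your proof is correct and is exactly what the paper intends: its own "proof" of this lemma simply states that the argument is analogous to that of Lemma~\ref{Lemma: DivisorToSectionGor} and leaves the details to the reader, and you have carried out precisely that analogy, including the one point that genuinely needs checking (that $\omega$, as a nonzero coherent subsheaf of $\omega_{\calK}$, is rank $1$ and torsion-free, so that injectivity of a nonzero $\phi$ and the scalar-automorphism fact from \cite[Cor.~5.3, Lem.~5.4]{altman80} still apply).
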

\begin{proof}
	The proof is analogous to the proof of Lemma~\ref{Lemma: DivisorToSectionGor}.  The details are left to the interested reader.
\end{proof}

\begin{definition}
	The \textbf{degree} of a generalized $\omega$-divisor $D_{\omega}$ is defined by $\deg(D_{\omega}) := \deg(\omega)-\deg(I_{D_{\omega}})$.
\end{definition}

\begin{lemma} \label{Lemma: DegreeOfOmegaDivisor}
	The degree function has the following properties:
	\begin{enumerate}
		\renewcommand{\theenumi}{(\alph{enumi})}
		\renewcommand{\labelenumi}{\theenumi}

		\item if $D_{\omega}$ is an effective generalized $\omega$-divisor, then $\deg(D_{\omega})$ equals the length of the quotient module $\omega/I_{D_{\omega}}$; \label{Lemma: DegreeOfOmegaDivisorOne}
		\item $\deg(D+E_{\omega})=\deg(D)+\deg(E_{\omega})$ provided $D$ or $E_{\omega}$ is Cartier; \label{Lemma: DegreeOfOmegaDivisorTwo}
		\item $\deg( n(D) ) = -\deg(D)$ and $\deg( n(D_{\omega}) ) = -\deg(D_{\omega})$; \label{Lemma: DegreeOfOmegaDivisorThree}
		\item if $D_{\omega}$ is linearly equivalent to $E_{\omega}$, then $\deg(D_{\omega})=\deg(E_{\omega})$. \label{Lemma: DegreeOfOmegaDivisorFour}
	\end{enumerate}
\end{lemma}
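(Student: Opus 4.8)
The plan is to mirror the proof of Lemma~\ref{Lemma: DegreeOfDivisor}, systematically replacing the $\calO_{X}$-linear dual by the $\omega$-linear dual. The decisive structural input is that for $\omega$-divisors the reflexivity identity $n(n(D_{\omega}))=D_{\omega}$ holds unconditionally (Lemma~\ref{Lemma: StarReflexive}), so, unlike in Lemma~\ref{Lemma: DegreeOfDivisor}\ref{Lemma: DegreOfDivisorThree}, no Gorenstein hypothesis will be needed. I would expand the definitions $\deg(I)=\chi(I)-\chi(\calO_{X})$ and $\deg(D_{\omega})=\deg(\omega)-\deg(I_{D_{\omega}})$ everywhere and lean throughout on two consequences of coherent duality: that $\chi(\omega)=-\chi(\calO_{X})$, and more generally that $\chi(\ShHom(I,\omega))=-\chi(I)$ for every rank $1$, torsion-free sheaf $I$. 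I would dispatch the three formal properties \ref{Lemma: DegreeOfOmegaDivisorOne}, \ref{Lemma: DegreeOfOmegaDivisorFour}, \ref{Lemma: DegreeOfOmegaDivisorTwo} first and treat \ref{Lemma: DegreeOfOmegaDivisorThree} last, as it carries the genuine content.

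For property~\ref{Lemma: DegreeOfOmegaDivisorOne}, effectivity means $I_{D_{\omega}}\subset\omega$, so I would apply additivity of $\chi$ to the short exact sequence $0\to I_{D_{\omega}}\to\omega\to\omega/I_{D_{\omega}}\to 0$. Expanding the definition gives $\deg(D_{\omega})=\chi(\omega)-\chi(I_{D_{\omega}})$, and this difference equals $\chi(\omega/I_{D_{\omega}})$, which is the length of the zero-dimensional quotient. Property~\ref{Lemma: DegreeOfOmegaDivisorFour} is immediate from the $\omega$-divisor analogue of Lemma~\ref{Lemma: PropertiesGeneralizedDivisor}\ref{Item: DivisorPropertyIso} proved just above: linear equivalence of $D_{\omega}$ and $E_{\omega}$ yields $I_{D_{\omega}}\cong I_{E_{\omega}}$, hence equal Euler characteristics and equal degrees.

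For property~\ref{Lemma: DegreeOfOmegaDivisorTwo}, I would invoke the isomorphism $I_{D+E_{\omega}}\cong I_{D}\otimes I_{E_{\omega}}$ (valid when $D$ or $E_{\omega}$ is Cartier) from the $\omega$-divisor analogue of Lemma~\ref{Lemma: PropertiesGeneralizedDivisor}\ref{Item: DivisorPropertySum}, and then reduce to the sheaf-level additivity $\deg(I\otimes L)=\deg(I)+\deg(L)$ for a line bundle $L$, i.e.\ the identity $\chi(I_{D}\otimes I_{E}^{\otimes n})=n\deg(I_{E})+\chi(I_{D})$ of \cite[Chap.~1]{kleiman66} already used in Lemma~\ref{Lemma: DegreeOfDivisor}\ref{Lemma: DegreOfDivisorTwo}. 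Feeding this into the definition of $\deg(D+E_{\omega})$ and substituting $\deg(I_{E_{\omega}})=\deg(\omega)-\deg(E_{\omega})$ (resp.\ $\deg(I_{D})=-\deg(D)$) collapses the terms to $\deg(D)+\deg(E_{\omega})$; I would record the two Cartier cases separately, since the line bundle playing the role of $L$ is $I_{E_{\omega}}$ in one case and $I_{D}$ in the other.

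The substance of the lemma is property~\ref{Lemma: DegreeOfOmegaDivisorThree}, which is the step I expect to require the most care. Here $I_{n(D_{\omega})}=\ShHom(I_{D_{\omega}},\omega)$ and $I_{n(D)}=\ShHom(I_{D},\omega)$, so coherent duality gives $\chi(I_{n(D_{\omega})})=-\chi(I_{D_{\omega}})$ and $\chi(I_{n(D)})=-\chi(I_{D})$. Expanding $\deg(n(D_{\omega}))=-\deg(I_{n(D_{\omega})})$ (as $n(D_{\omega})$ is an ordinary divisor) and $\deg(n(D))=\deg(\omega)-\deg(I_{n(D)})$ (as $n(D)$ is an $\omega$-divisor), both identities reduce by elementary algebra to the single equation $\chi(\omega)=-\chi(\calO_{X})$, which is coherent duality for the structure sheaf. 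The point to stress --- and the reason the Gorenstein hypothesis of Lemma~\ref{Lemma: DegreeOfDivisor}\ref{Lemma: DegreOfDivisorThree} disappears --- is that Serre duality computes $\chi(\ShHom(I,\omega))$ for \emph{every} rank $1$, torsion-free $I$, whereas in the $\calO_{X}$-dual setting one first had to pass through $\ShHom(I_{D},\omega)\cong I_{D}^{\vee}\otimes\omega$, an isomorphism that requires $\omega$ to be locally free.
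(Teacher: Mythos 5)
Your proposal is correct and is essentially the paper's own proof: the paper disposes of this lemma with the single line that ``the proof is similar to the proof of Lemma~\ref{Lemma: DegreeOfDivisor},'' and what you have written is precisely that adaptation carried out in detail --- additivity of $\chi$ for \ref{Lemma: DegreeOfOmegaDivisorOne}, the isomorphism-class lemma for \ref{Lemma: DegreeOfOmegaDivisorFour}, Kleiman's identity $\chi(I\otimes L^{\otimes n})=n\deg(L)+\chi(I)$ for \ref{Lemma: DegreeOfOmegaDivisorTwo}, and coherent duality for \ref{Lemma: DegreeOfOmegaDivisorThree}. Your closing observation is also exactly the right diagnosis of why the Gorenstein hypothesis present in Lemma~\ref{Lemma: DegreeOfDivisor}\ref{Lemma: DegreOfDivisorThree} disappears here: negation of ($\omega$-)divisors is defined directly by the $\omega$-dual, so $\chi(\ShHom(I,\omega))=-\chi(I)$ applies without first invoking $\ShHom(I_{D},\omega)\cong I_{D}^{\vee}\otimes\omega$.
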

	\begin{proof}
	The proof of this lemma is similar to the proof of Lemma~\ref{Lemma: DegreeOfDivisor}. 
\end{proof}
We can use the lemma to conclude that 
$$
	\deg(\calM(D_{\omega}))=\deg(D_{\omega}).
$$
For generalized divisors, the analogous equality only holds when $X$ is Gorenstein.  

We now state  the Riemann--Roch Formula.
\begin{proposition}
	We have
	\begin{displaymath}
		\dim |D_{\omega}| - \dim |\operatorname{adj} D_{\omega}| = \deg(D_{\omega})+1-g.
	\end{displaymath}
\end{proposition}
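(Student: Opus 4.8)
The plan is to mirror the proof of Proposition~\ref{Prop: GorensteinRiemanRoch}, converting the difference of dimensions into an Euler characteristic by means of coherent duality. The essential improvement over the generalized-divisor case is that the degree identity $\deg(\calM(D_{\omega})) = \deg(D_{\omega})$ now holds for \emph{every} curve, by Lemma~\ref{Lemma: DegreeOfOmegaDivisor}\ref{Lemma: DegreeOfOmegaDivisorThree}, so no Gorenstein hypothesis enters. First I would rewrite both sides cohomologically. The lemma identifying $|D_{\omega}|$ with $\bbP H^{0}(X, \calM(D_{\omega}))$ gives $\dim |D_{\omega}| = h^{0}(X, \calM(D_{\omega})) - 1$, and applying the same lemma to $\operatorname{adj} D_{\omega}$, together with the defining relation $\calM(\operatorname{adj} D_{\omega}) = \ShHom(\calM(D_{\omega}), \omega)$, gives $\dim |\operatorname{adj} D_{\omega}| = h^{0}(X, \ShHom(\calM(D_{\omega}), \omega)) - 1$.

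The key step is then to apply coherent (Serre) duality in the form recorded as Eq.~\eqref{SerreDuality}. Setting $\calF := \calM(D_{\omega})$, which is a rank $1$, torsion-free sheaf, duality yields $H^{1}(X, \calF)^{\vee} \cong \Hom(\calF, \omega) = H^{0}(X, \ShHom(\calF, \omega))$, where the last equality is the general identification of global $\Hom$ with global sections of $\ShHom$ already noted in the discussion preceding Lemma~\ref{Lemma: DivisorToSectionGor}. Hence $h^{0}(X, \ShHom(\calM(D_{\omega}), \omega)) = h^{1}(X, \calM(D_{\omega}))$. Subtracting the two expressions, the constants $-1$ cancel and I obtain
\[
	\dim |D_{\omega}| - \dim |\operatorname{adj} D_{\omega}| = h^{0}(X, \calM(D_{\omega})) - h^{1}(X, \calM(D_{\omega})) = \chi(\calM(D_{\omega})).
\]

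To finish I would invoke the numerical Riemann--Roch for a rank $1$ coherent sheaf, namely $\chi(\calM(D_{\omega})) = \deg(\calM(D_{\omega})) + \chi(\calO_{X}) = \deg(\calM(D_{\omega})) + 1 - g$, and substitute $\deg(\calM(D_{\omega})) = \deg(D_{\omega})$, the identity noted just before the proposition. Combining these displays gives the asserted formula.

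The main obstacle --- indeed essentially the only substantive input --- is the duality isomorphism $H^{1}(X, \calF)^{\vee} \cong \Hom(\calF, \omega)$ for the possibly singular curve $X$ and the coherent sheaf $\calF = \calM(D_{\omega})$. This is precisely coherent duality with $\omega$ the dualizing sheaf, and the point is that it applies unconditionally here because we have formed the $\omega$-dual rather than the $\calO_{X}$-dual. In the generalized-divisor setting the analogous argument also produces $\chi(\calL(D))$, but there one only controls $\deg(\calL(D))$ rather than $\deg(D)$, and these can differ on a non-Gorenstein curve (Example~\ref{Example: WrongDegree}); it is exactly this discrepancy that the $\omega$-divisor formalism removes, which is why the formula above requires no hypothesis on $X$.
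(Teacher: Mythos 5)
Your proof is correct and is essentially the paper's own argument spelled out: the paper proves this proposition with the single remark that it is a consequence of coherent duality in the form of Eq.~\eqref{OmegaSerreDuality}, which is exactly the chain $\dim |D_{\omega}| - \dim |\operatorname{adj} D_{\omega}| = \chi(\calM(D_{\omega})) = \deg(\calM(D_{\omega}))+1-g = \deg(D_{\omega})+1-g$ that you carry out, using the identification $|E_{\omega}| \cong \bbP H^{0}(X, \calM(E_{\omega}))$ and the degree identity noted just before the proposition. The one slip is the citation: the duality you need is Eq.~\eqref{OmegaSerreDuality} (equivalently the unconditional Coherent Duality Theorem $\Ext^{n}(F, \omega) \cong H^{1-n}(X, F)^{\vee}$ from the appendix), not Eq.~\eqref{SerreDuality}, which is the Gorenstein-only statement for generalized divisors --- but since the isomorphism $H^{1}(X, \calF)^{\vee} \cong \Hom(\calF, \omega)$ you actually write down and justify is the correct unconditional form, the argument stands as written.
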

\begin{proof}
This is a consequence of coherent duality \eqref{OmegaSerreDuality}. 

\end{proof}

Having established a version of the Riemann--Roch Formula for generalized $\omega$-divisors, we can now describe $\dim |D_{\omega}|$ just as we did for divisors on a smooth curve.

\begin{corollary} \label{Cor: GenLinSystem}
The equation
\begin{equation} \label{Eqn: GeneralLinearSystemNonGorenstein}
	\dim |D_{\omega}|  = 	\begin{cases}
					-1 & \text{if $d <g$;} \\
					d-g& \text{if $d \ge g$.}
				\end{cases}
\end{equation}
holds for every generalized $\omega$-divisor $D_{\omega}$ of degree $d > 2g-2$

 Furthermore, if $D_{\omega}$ is a $\omega$-divisor of degree  $d \le 2g-2$, then there exists a degree $0$ Cartier divisor $E$ such that $E+D_{\omega}$ satisfies Eq.~\eqref{Eqn: GeneralLinearSystemNonGorenstein}. 
\end{corollary}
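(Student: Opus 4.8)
The plan is to follow the proof of Corollary~\ref{Cor: GorensteinGenLinSystem} essentially verbatim, replacing generalized divisors by generalized $\omega$-divisors throughout and replacing $\calL(-)$ by $\calM(-)$. The essential observation is that every ingredient used there is now available with no Gorenstein hypothesis: the Riemann--Roch Formula for $\omega$-divisors holds unconditionally, every rank $1$, torsion-free sheaf is $\omega$-reflexive by Lemma~\ref{Lemma: StarReflexive}, and, crucially, the identity $\deg(n(D_\omega)) = -\deg(D_\omega)$ of Lemma~\ref{Lemma: DegreeOfOmegaDivisor}\ref{Lemma: DegreeOfOmegaDivisorThree} holds for all $X$. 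The first step is to pin down the degree of the adjoint. Applying the Riemann--Roch Formula to $D_\omega$ and then to $\operatorname{adj} D_\omega$ — using that $\operatorname{adj}\operatorname{adj} D_\omega$ is linearly equivalent to $D_\omega$, which follows from the $\omega$-reflexivity $n(n(D_\omega)) = D_\omega$ of Lemma~\ref{Lemma: StarReflexive} — and adding the two resulting equations yields $\deg(\operatorname{adj} D_\omega) = 2g-2-d$.

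For the first assertion, suppose $\deg(D_\omega) = d > 2g-2$, so that $\deg(\operatorname{adj} D_\omega) = 2g-2-d < 0$. An effective generalized $\omega$-divisor has degree equal to the length of a quotient of $\omega$ and hence nonnegative (Lemma~\ref{Lemma: DegreeOfOmegaDivisor}\ref{Lemma: DegreeOfOmegaDivisorOne}), while linearly equivalent $\omega$-divisors share a common degree (Lemma~\ref{Lemma: DegreeOfOmegaDivisor}\ref{Lemma: DegreeOfOmegaDivisorFour}); therefore $|\operatorname{adj} D_\omega| = \emptyset$, i.e.~$\dim |\operatorname{adj} D_\omega| = -1$. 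The Riemann--Roch Formula then gives $\dim |D_\omega| = d-g$, which is the asserted value since $d > 2g-2$ forces $d \ge g$.

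For the second assertion I would mimic the point-subtracting induction of Corollary~\ref{Cor: GorensteinGenLinSystem}. Fix $e$ with $d+e > 2g-2$ and a Cartier divisor $A$ equal to a sum of $e$ distinct points of the smooth locus $X^{\text{sm}}$; then $A + D_\omega$ has degree $d+e > 2g-2$, so by the first part $h^0(X, \calM(A + D_\omega)) = d+e-g+1$. The inductive step is that if $\calM(B_\omega)$ admits a nonzero global section $\sigma$, then for a general $p \in X^{\text{sm}}$ one has $\sigma(p) \neq 0$ in the fiber, so the evaluation map $H^0(X, \calM(B_\omega)) \to \calM(B_\omega) \otimes k(p)$ is surjective and its kernel $H^0(X, \calM((-p) + B_\omega))$ has dimension one less; here I use that for the smooth (hence Cartier) point $p$ the sheaf $\calM((-p) + B_\omega)$ is the subsheaf of $\calM(B_\omega)$ consisting of sections vanishing at $p$. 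Subtracting $e$ such general points from $A + D_\omega$ produces $E + D_\omega$ with $E := A - p_1 - \dots - p_e$ a degree $0$ Cartier divisor, and the bookkeeping on $h^0$ gives $h^0 = \max(0, d-g+1)$, which is exactly Eq.~\eqref{Eqn: GeneralLinearSystemNonGorenstein} in both ranges $d \ge g$ and $d < g$.

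I do not expect a serious obstacle, since every structural tool has a Gorenstein-free $\omega$-analogue already recorded in this section and the argument is a faithful transcription. The one place demanding genuine care is the degree computation for the adjoint, as this is precisely where the generalized-divisor argument broke down in the non-Gorenstein case (compare Example~\ref{Example: WrongDegree}); the symmetric Riemann--Roch argument above, resting on $n(n(D_\omega)) = D_\omega$, is exactly what replaces the failed identity $\deg(-D) = -\deg(D)$ and makes the whole transcription go through.
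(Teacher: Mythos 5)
Your proposal is correct and is exactly the paper's proof, which simply says to replace generalized divisors with $\omega$-divisors in the proof of Corollary~\ref{Cor: GorensteinGenLinSystem}; your transcription fills in the details faithfully (the $\omega$-Riemann--Roch formula, nonnegativity of degrees of effective $\omega$-divisors, and the point-subtracting induction with $\calM((-p)+B_{\omega}) = I_{p}\cdot\calM(B_{\omega})$). Your extra care in deriving $\deg(\operatorname{adj} D_{\omega}) = 2g-2-d$ from $n(n(D_{\omega}))=D_{\omega}$ rather than from the Gorenstein identity $\deg(-D)=-\deg(D)$ is precisely the point that makes the transcription legitimate, even though the paper leaves it implicit.
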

\begin{remark}
As with Corollary~\ref{Cor: GorensteinGenLinSystem}, the second part of the present corollary implies that the general effective  $\omega$-divisor satisfies Eq.~\eqref{Eqn: GeneralLinearSystemNonGorenstein}.  To be precise,  in Section~\ref{Section: Abel Map} we will introduce the Quot scheme $\Quot^{d}_{\omega}$ which is a projective $k$-scheme that parameterizes  effective generalized $\omega$-divisors of degree $d$, and  the present corollary implies that the locus of $\omega$-divisors satisfying  Eq.~\eqref{Eqn: GeneralLinearSystemNonGorenstein} is dense in $\Quot^{d}_{X}$.
\end{remark}

\begin{proof}
	Replace generalized divisors with $\omega$-divisors in the proof of Corollar~\ref{Cor: GorensteinGenLinSystem}.
\end{proof}

\subsection{Examples} \label{Subsect: Examples}
We now study  generalized divisors on a singular curve of low genus.  Our goal is to provide examples similar to those given at the end of Section~\ref{Sect: Jacobian} in order to illustrate the differences  and similarities between divisors on non-singular curves and divisors on singular curves. We focus on describing divisors of degree $d=g-1$ and $d=g$.  

\begin{example}[Genus $1$] \label{Example: SingGenus1}
A genus $1$ curve $X$ is always Gorenstein with trivial canonical divisor $K=0$. The only effective generalized divisor of degree $0=g-1$ divisor is the empty divisor $0$, and a degree $1=g$ effective divisor is a point $p \in X$.  No divisor of degree $g-1$ or $g$ moves in a positive dimensional linear system.
\end{example}

\begin{example}[Genus $2$] \label{Example: SingGenus2}
Here we first encounter non-Gorenstein curves.  Let us first dispense with the Gorenstein case, which is analogous to the smooth case.  If $X$ is a Gorenstein curve of genus $2$, then $X$ admits a degree $2$ morphism $f \colon X \to \bbP^1$ whose fibers $f^{-1}(t)$ are exactly the effective canonical divisors $K$ (\cite[Prop.~3.2]{kleiman09}).  Arguing as in Example~\ref{Example: SmGenus2}, we see that no degree $1=g-1$ divisor moves in a positive dimensional linear system, and the effective degree $2=g$ divisors that satisfy $\dim |D| \ge 1$ are exactly the effective canonical divisors. 

What about the non-Gorenstein curves?  We will just consider the  curve $X$ from Example~\ref{Example: Dualizing} (which is a representative example). Recall that $X$ is a rational curve with a unique  singularity $p_0 \in X$ that is unibranched and non-Gorenstein.  We saw in Example~\ref{Example: LargeNonspecial} that generalized divisors on $X$ do not behave as they do on a Gorenstein curve: there exists a degree $d=2g-1$ generalized divisor $D$ with $\dim |D| > d-g$, and this cannot happen on a Gorenstein curve.  Let us examine generalized divisors of degree $1=g-1$ and $2=g$.

A degree $1=g-1$ divisor $D=p$ must satisfy $\dim |D|=0$.  This is \cite[Thm.~8.8]{altman80}.  If $\dim |p|>0$, then $p$ would necessarily be linearly equivalent to a point $q$ lying in the smooth locus and any non-constant function with at worst a pole at $q$ would define an isomorphism $X \cong \bbP^1$, which is impossible.

There are degree $2=g$ divisors that move in a positive dimensional linear system.  If $p, q \in X$ are points distinct from the singularity $p_0$, then a modification of the construction from Example~\ref{Example: LargeNonspecial} shows that $p_0 + p$ is linearly equivalent to $p_0 +q$, so $\dim |p_0+p| \ge 1$.  In fact, $\dim |p_0+p|=1$.  We can prove this as follows.  Take $p_1$ to be the point in $X_2$ defined by the ideal $(s)$.  Then $I_{p_0+p_1}$ is the ideal generated by $(t^3, t^4, t^5)$ on $X_1$ and by $(s)$ on $X_2$.  A computation shows that $\calL(p_0+p_1)$ is the subsheaf of $\calK$ generated by $(1,t,t^2)$ on $X_1$ and by $(s^{-1})$ on $X_2$, and this rank $1$, torsion-free sheaf is isomorphic to the direct image $\nu_* \calO(1)$ of the line bundle $\calO(1)$ under the normalization map $\nu \colon \bbP^{1} = X^{\nu} \to X$.  In particular, $h^{0}(X, \calL(p_0+p_1)) = 2$, so $\dim |p_0+p_1| = 1$.  

What are all the elements of $|p_0+p_1|$?  In addition to the divisors $p_0+p$, the linear system $|p_0+p_1|$ contains a non-reduced divisor: the non-reduced divisor $D_0$ whose ideal $I_{D_0}$ is
$$
	I_{D_{0}} = \begin{cases}
				(t^{4}, t^{5}, t^{6})	& \text{ on $X_1$;} \\
				(1)				& \text{ on $X_2$}.
			\end{cases}
$$
Indeed, $D_0$ lies in $|p_{0}+p_1|$ as $D_0=p_0+p_1+\operatorname{div}(t^{-1})$.  The generalized divisors that we have constructed are all the effective degree $2$ divisors that move in a positive dimensional linear system.  Let us prove this statement.

There are no degree $2$ Cartier divisors that move in a positive dimensional linear system because the existence of such a divisor would imply the existence of a non-constant degree $2$ morphism $X \to \bbP^1$, forcing $X$ to be Gorenstein (\cite[Prop.~2.6]{kleiman09}). To handle non-Cartier divisors, we need to do more work.  

We claim that if $E$ is an effective generalized divisor of degree $2$ that does not lie in $|p_0+p_1|$ and is not Cartier, then $\calL(E)$ is the subsheaf of $\calK$ generated by $1,t,t^{2}$ on $X_1$ and by $1$ on $X_2$.  This subsheaf is isomorphic to the direct image $\nu_* \calO$ of the structure sheaf of the normalization, so any such $E$ satisfies $h^{0}(\calL(E))=1$ or equivalently $\dim |E|=0$.  Thus the claim implies that the elements of $| p_{0} + p_1 |$ are exactly the degree $2$ divisors that move in a positive dimensional linear system as we wished to show.

We prove the claim by direct computation.  If $E$ does not lie in $|p_0+p_1|$ and is not Cartier, then $E$ must be supported at the singularity $p_0$, so we can pass from $X$ to the open affine $X_1$.  By considering the dimension of $\calO_{E} = \calO_{X_1}/I_{E}$, we see that the ideal $I_{E}$ of $E$ must contain the square of the maximal ideal $(t^{3}, t^{4}, t^{5})$ and a $2$-dimensional subspace $W$ of $k \cdot t^{3} + k \cdot t^{4} + k \cdot t^{5}$.  Certainly the elements $1, t, t^{2}$ are all contained in $\calL(E)$, so $\calL(E)$ contains $\calO_{X_1}^{\nu}=k[t]$.  To complete the proof of the claim, we need to show that $\calL(E)$ is no larger.  

Thus suppose $f \in \calL(E)$. Write $f$ as a Laurent series in $t$.  Because $D_0 \ne E$, the vector space $W$ must contain an element of the form $t^{3}+a t^{4} + b t^{5}$.  If multiplication by $f \in k(X)$ maps $t^{3} + a t^{4} + b t^{5}$ into $\calO_{X_1}$, then the Laurent series of $f$ must be of the form  $c_{-3} t^{-3} + c_{0} + c_{1} t^{1} + \dots$.  However, $W$ must also contain an element of the form $c t^{4} + d t^{5}$ with $c$ and $d$ not both zero.  By examining the coefficients of $(c t^{4} + d t^{5}) f$, we see that $c_{-3}=0$.  This proves the claim, completing our discussion of generalized divisors on $X$.

Generalized $\omega$-divisors on $X$ are easier to analyze because we can use the Riemann--Roch Formula.  Effective canonical $\omega$-divisors $K_{\omega}$ are  $\omega$-divisors of degree $2=g$ that move in a positive dimensional linear system.  Indeed, $K_{\omega}$ has degree $2$ and satisfies
\begin{align*}
	\dim |K_{\omega}| =& \dim H^{0}(X, \calM(K_{\omega})) - 1\\
			=& \dim H^{0}(X, \omega) - 1\\
			=& 1.
\end{align*}
The canonical $\omega$-divisors are the only $\omega$-divisors of degree $g$ that move in a positive dimensional linear system.  If $\deg(D_{\omega})=g$ and $\dim | D_{\omega} |>0$, then  by the Riemann--Roch Formula, $\operatorname{adj} D_{\omega}$ is a  $\omega$-divisor of degree $0$ that satisfies $|\operatorname{adj} D_{\omega}| \ne \emptyset$.  This is only possible if $\operatorname{adj} D_{\omega}=0$ (as degree considerations show that any nonzero global section of $I_{\operatorname{adj} D_{\omega}}$ defines an isomorphism with $\calO_{X}$). We can conclude that $D_{\omega}=K_{\omega}$.

What about the generalized $\omega$-divisors of degree $1=g-1$?  A point $p \in X$ distinct from $p_0$ defines an effective $\omega$-divisor $p_{\omega}$ of degree $1$.  There are additional effective $\omega$-divisors of degree $1$ that correspond to quotients supported at the singularity $p_0$.  In Example~\ref{Example: Dualizing} of the Appendix we compute a presentation of $\omega$, and from that presentation, we see that the stalk $\omega/I_{p_0} \cdot \omega$ of $\omega$ at $p_0$ is a $2$-dimensional $k$-vector space. There is thus a $1$-dimensional family of quotients of $\omega$ supported at $p_0$, corresponding to the surjections $\omega/I_{p} \cdot \omega \to k$.

None of these $\omega$-divisors moves in a positive dimensional linear system.  Indeed, fix a point $q \in X$ distinct from the singularity $p_0$.  If $D_{\omega}$ is a\ $\omega$-divisor of degree $1=g-1$ with $\dim |D_{\omega}| > 0 $, then $-q+D_{\omega}$ is a degree $0$ $\omega$-divisor that is effective, so $-q+D_{\omega}$ is linearly equivalent to $0_{\omega}$.  We can conclude that $D_{\omega}$ is linearly equivalent to $q_{\omega}$.  In particular, $D_{\omega}$ must be Cartier.  As in the case of generalized divisors, if we fix two linearly independent global sections of $\calM(D_{\omega})$, then these sections define an isomorphism $X \cong \bbP^1$, which is impossible. This completes our study of genus $2$ curves.
\end{example}

\begin{example}[Genus $3$] \label{Example: SingGenus3}
The classification of  non-Gorenstein curves of genus $g$ becomes complicated once $g \ge 3$, so we now focus on the Gorenstein case.  As in the smooth case, if $X$ is a Gorenstein curve of genus $3$, then the canonical map $X \to \bbP^2$ is either an embedding of $X$ as a plane curve of degree $4$ or a degree $2$ map onto a plane quadric curve, in which case $X$ is hyperelliptic (by \cite[Thm.~4.13]{kleiman09}).  We begin by analyzing the hyperelliptic case.

If $X$ is hyperelliptic with degree $2$ map to the line $f \colon X \to \bbP^1$, then the effective canonical divisors are the divisors of the form $K=f^{-1}(t_1)+f^{-1}(t_2)$ for $t_1, t_2 \in \bbP^1$.  Arguing as in Example~\ref{Example: SmGenus3}, we see that the degree $2=g-1$ effective divisors that are contained in a positive dimensional linear system are the divisors $f^{-1}(t)$ with $t \in \bbP^1$, and the degree $3=g$ effective divisors with this property are the divisors $p+f^{-1}(t)$ for $p \in X$.  In particular, every degree $g-1$ effective generalized divisor that moves in a positive dimensional linear system is Cartier, but this is not true for degree $g$ divisors.  If $p_0 \in X$ is a singularity, then $p_0+f^{-1}(t)$ for $t \in \bbP^1$ is not Cartier but $\dim |p_0 + f^{-1}(t)|=1$.

What about when $X$ is non-hyperelliptic?  The curve is then a degree $4$ plane curve $X \subset \bbP^2$, and the effective canonical divisors are restrictions of lines $K = \ell \cap X$.  As in the hyperelliptic case, the analysis we gave for non-singular curves at the end of Section~\ref{Sect: Jacobian} extends to the present case.  No effective generalized divisor of degree $g-1$ moves in a positive dimensional linear system, but an effective generalized divisor of degree $g$ moves in a positive dimensional linear system when it is contained in a line, so e.g.~three points  $p_0, p_1, p_2$ of $X$ that lie on a line satisfy $\dim |p_0+p_1+p_2| =1$.  In particular, if $p_0$ is a node of $X$, then $p_0 + p_1 + p_2$ is an effective generalized divisor of degree $g$ that is not Cartier but moves in a positive dimensional linear system.

\end{example}

\begin{example}[Genus $4$] \label{Example: SingGenus4}
Here we only study a few specific examples of genus $4$ curves.  On a  curve of genus $g<4$, every effective generalized divisor $D$ of degree $g-1$ that moves in a positive dimensional linear system is Cartier.  This remains true on the general curve of genus $g=4$, but not on certain special genus $4$ curves.  We focus on describing divisors on these  special  curves.

A singular hyperelliptic curve of genus $4$ is an example of  a special curve.  If $f \colon X \to \bbP^1$ is the degree $2$ map to $\bbP^1$, then the effective canonical divisors are the divisors of the form $K=f^{-1}(t_1) + f^{-1}(t_2) + f^{-1}(t_3)$.  We can conclude that the degree $g-1$ divisors that move in a positive dimensional linear system are the divisors of the form $p_0+f^{-1}(t)$ for $p_0 \in X$ and $t \in \bbP^1$.  In particular, the divisor $p_0 + f^{-1}(t)$ is not Cartier when $p_0 \in X$ is a singularity.

What about the non-hyperelliptic case?  If $X$ is a non-hyperelliptic Gorenstein curve of genus $4$, then the canonical map realizes $X$ as a degree $6$ space curve $X \subset \bbP^3$.  Just as in the smooth case, $X$ is the complete intersection of a (non-unique) cubic hypersurface  and a (unique) quadric hypersurface $Q$.  If the two hypersurfaces are general, then $X$ is smooth, but $X$ can have singularities when the hypersurfaces are special.  Consider the case where $Q$ is the cone over a plane quadric curve $Y$ with  vertex $p_0 \in \bbP^3$ and the cubic hypersurface contains $p_0$ but is otherwise general. The curve $X$ then has a unique node that is the vertex $p_0$ of $Q$.

The effective canonical divisors of $X$ are the divisors of the form $K= h \cap X$ for $h \subset \bbP^3$ a hyperplane.  Our analysis in Example~\ref{Example: SmGenus4} shows that an effective degree $3=g-1$ generalized divisor $D$ moves in a positive dimensional linear system precisely when $D$ is contained in a line $\ell \subset Q$ that lies on the quadric surface.  Every such line meets $X$ in $3$ points, one of which is the vertex $p_0$.  The lines on $Q$ are exactly the lines joining a point of the plane curve $Y$ to the vertex $p_0$.  If the (set-theoretic) intersection of a line and $X$ consists of the points $p_0, p_1, p_2$, then $p_0+p_1+p_2$ is a degree $g-1$ effective generalized divisor that moves in a positive dimensional linear system, and the general  divisor with this property is of the form $p_0+p_1+p_2$ for a suitable line.  Because $p_0 \in X$ is a node, $p_0+p_1+p_2$ is not Cartier.  Thus every degree $g-1$ generalized divisor that moves in a positive dimensional linear system fails to be Cartier.
\end{example}

\section{The Abel map} \label{Section: Abel Map}
At the end of Section~\ref{Sect: CompJac}, we posed the problem of defining an Abel map associated to a singular curve.  There are two motivations.  First, a  theory of the Abel map provides us with a tool for constructing and describing the compactified Jacobian $\bar{J}_{X}^{d}$.  Second, a  theory of the Abel map allows us to define and study the theta divisor $\Theta \subset \bar{J}^{g-1}_{X}$ associated to a singular curve.   

Example~\ref{Example: NaiveAbelBad} demonstrated that the most naive approach to constructing an Abel map fails:  If $X^{(d)}$ is the $d$-th symmetric power of a curve, then the  rule that assign to $d$ general points $p_1, \dots, p_d$ the line bundle $\calO_{X}(p_1+\dots+p_d)$ defines a rational map
\begin{displaymath}
	A \colon X^{(d)} \dashrightarrow \bar{J}^{d}_{X},
\end{displaymath}
but this map may not be regular.  In the example, the 2nd Abel map $A \colon X^{(2)} \dashrightarrow \bar{J}^{2}_{X}$ is undefined at a unique point.  

Lemma~\ref{Lemma: SymIsHilb} suggests an alternative approach to constructing the Abel map.  For a smooth curve $X$, the symmetric power $X^{(d)}$ can be interpreted as the moduli space of effective Cartier divisors of degree $d$, and the Abel map $A \colon X^{(d)} \to J^{d}_{X}$ can be interpreted as the map that sends a divisor $D$ to its associated line bundle $\calL(D)$.  When $X$ is singular, the symmetric power no longer has such a moduli theoretic interpretation. Here we resolve the indeterminacy of $A \colon X^{(d)} \dashrightarrow \bar{J}^{d}_{X}$ by constructing an Abel map out of a moduli space that maps to $X^{(d)}$.

Our discussion from Section~\ref{Section: GeneralizedDivisors} suggests two possibilities for the moduli space: the moduli space of effective generalized divisors and the moduli space of effective generalized $\omega$-divisors.  We will see that there are two different Abel maps corresponding to the two different types of divisors.  The two maps are essentially equivalent when $X$ is Gorenstein but have  different properties in general.  We begin by defining the relevant moduli spaces.  

Effective generalized divisors are parameterized by the \textbf{Hilbert scheme} $\Hilb_{X}^{d}$ which is defined to be the $k$-scheme that represents the Hilbert functor $\Hilb_{X}^{d, \sharp}$ defined in Definition~\ref{Def: HilbFunctor}. (There  $X$ was assumed to be smooth, but the definition remains valid when $X$ is singular.)  Generalized $\omega$-divisors are parameterized by a  Quot scheme that we now define.
\begin{definition}
	The \textbf{Quot functor} $\Quot_{\omega}^{d, \sharp}$ of degree $d$ is defined by setting $\Quot_{\omega}^{d, \sharp}(T)$ equal to the set of isomorphism classes of $T$-flat quotients $q \colon \omega_{T} \twoheadrightarrow Q$ of the dualizing sheaf with the property that the restriction $Q$ to any fiber of $X_{T} \to T$ has length $d$.  The  \textbf{Quot scheme} $\Quot_{\omega}^{d}$ of degree $d$ is the $k$-scheme that represents $\Quot_{\omega}^{d, \sharp}$.
\end{definition}
A quotient map $q \colon \omega \to Q$ is equivalent to the inclusion $\ker(q) \hookrightarrow \omega$ which defines a generalized $\omega$-divisor, so $\Quot_{\omega}^{d}$ is the moduli space of effective generalized $\omega$-divisors of degree $d$.  For the remainder of the section, we will write $D_{\omega}$ rather than $[q]$ for an element of $\Quot_{\omega}^{d}$ to emphasize the connection with generalized $\omega$-divisors.

The moduli spaces $\Hilb^{d}_{X}$ and $\Quot^{d}_{\omega}$ both exist as projective $k$-schemes by a theorem of Grothendieck (\cite[Thm.~2.6]{altman80}).  The construction realizes both of these moduli spaces as suitable subschemes of a Grassmannian variety.  

The Hilbert scheme and the Quot scheme are related to the symmetric power by the Hilbert--Chow morphism.  Suppose that $D \subset X$ is a degree $d$ closed subscheme.  Given $p \in X$, write $\ell_{p}(D)$ for the length of $\calO_{D, p}$.  With this definition, we can associate to $D$ the point in the symmetric power 
\begin{equation} \label{Eqn: HilbChowRule}
	[\sum \ell_{p}(D) \cdot p] \in X^{(d)}.
\end{equation}
We would like to assert that there is a morphism 
\begin{displaymath}
	\rho \colon \Hilb^{d}_{X} \to X^{(d)}
\end{displaymath}
with the property that $\rho$ is defined by Eq.~\eqref{Eqn: HilbChowRule} as a set-map.  We also like to assert the existence of an analogous morphism $\rho \colon \Quot^{d}_{\omega} \to X^{(d)}$ out of the Quot scheme.

These assertions are surprisingly difficult to prove.  The difficulty is that our construction of $X^{(d)}$ does not provide direct access to the scheme's functor of points.  One approach to constructing $\rho$ is to describe the functor of points of $X^{(d)}$ in a way that makes it possible to interpret Eq.~\eqref{Eqn: HilbChowRule} as the definition of a transformation of functors.  A detailed discussion of this approach can be found in the thesis of David Rydh (\cite{rydh08a} and esp.~\cite[Sect.~6]{rydh08b}).

Rydh attributes this construction of $\rho$, which he calls the Grothendieck--Deligne norm map, to Grothendieck and Deligne (\cite[Expos\'{e}~XVII, 6.3.4]{SGA4}).  In this approach, one first identifies $X^{(d)}$ with the space of divided powers.  The space of divided powers has a moduli-theoretic interpretation in terms of multiplicative polynomial laws, and $\rho$ is then constructed as the morphism that sends a closed subscheme to the multiplicative law given by a determinant construction.
A similar construction produces a morphism $\rho \colon \Quot^{d}_{\omega} \to X^{(d)}$ out of the Quot scheme.  There are other approaches to constructing $\rho$, using e.g.~projective geometry, and we direct the interested reader to \cite[Sect.~6]{rydh08b} for a review of the relevant literature.

We will try to resolve the indeterminacy of the Abel map $A \colon X^{(d)} \dashrightarrow J^{d}_{X}$ by constructing morphisms $\Hilb^{d}_{X} \to \bar{J}^{d}_{X}$ and $\Quot^{d}_{\omega} \to \bar{J}^{d}_{X}$ that factor as $\Hilb^{d}_{X} \stackrel{\rho}{\longrightarrow} X^{(d)} \stackrel{A}{\dashrightarrow} \bar{J}^{d}_{X}$ and $\Quot^{d}_{X} \stackrel{\rho}{\longrightarrow} X^{(d)} \stackrel{A}{\dashrightarrow} \bar{J}^{d}_{X}$.  We will always be able to construct a suitable morphism out of $\Quot^{d}_{\omega}$, but we will only be able to construct a morphism out of $\Hilb^{d}_{X}$ when $X$ is Gorenstein.

Before constructing the morphisms, we warn the reader that $\rho$ is not always a resolution of indeterminacy in the usual sense because $\rho$ is not always birational.  The Hilbert--Chow morphism is an isomorphism over the locus parameterizing $d$ distinct points (\cite[Thm.~II.3.4]{iversen70}).  When the singularities of $X$ are planar, the locus of distinct points in $\Hilb^{d}_{X}$ is dense, so $\rho$ is a birational map.  The Hilbert--Chow morphism is not birational when $X$ has a singularity of embedding dimension $3$ or more.  Indeed, when $X$ has such a singularity, $\Hilb^{d}_{X}$ is reducible, and the locus of $d$ distinct points is contained in a single component.  The other components are collapsed by $\rho$, so $\rho$ is not birational.  The issue with $\Quot^{d}_{\omega}$ is similar.

In any case, we now define Abel maps associated to singular curves.
\begin{definition} \label{Def: AbelForSing}
	The \textbf{Abel map} $A_{h} \colon \Hilb^{d}_{X} \to \bar{J}^{-d}_{X}$ out of the Hilbert scheme is defined by the rule 
	$$
		[D] \mapsto [I_{D}],
	$$
where $I_{D}$ is the ideal of the closed subscheme $D \subset X_{T}$.  The \textbf{Abel map} $A_{q} \colon \Quot^{d}_{\omega} \to \bar{J}_{X}^{2g-2-d}$ out of the Quot scheme is defined by the rule
\begin{displaymath}
	[D_{\omega}] \mapsto [I_{D_{\omega}}],
\end{displaymath}
where $I_{D_{\omega}}$ is the kernel of the quotient map corresponding to $D_{\omega}$.
\end{definition}
These are the Abel maps that are constructed in \cite{altman80}, and they enjoy many of the properties that we would like the Abel map to have.  For example, if $D$ is an effective generalized divisor, then the fiber $A^{-1}([I_{D}])$ containing $[D]$ is equal to the complete linear system $|D|$, and the same remains true if we let $D$ be a generalized $\omega$-divisor and replace $\Hilb^{d}_{X}$ with $\Quot^{d}_{\omega}$.  

From this description of  $A$, we can conclude from Corollary~\ref{Cor: GorensteinGenLinSystem} that if $d \ge 2 g - 1$ and $X$ is Gorenstein, then the fibers of $A \colon \Hilb^{d}_{X} \to \bar{J}_{X}^{-d}$ are all $\bbP^{d-g}$'s, and in fact, this Abel map is smooth of  relative  dimension $d-g$ (\cite[Thm.~8.6]{altman80}).  Example~\ref{Example: LargeNonspecial} shows that this is not true when $X$ is non-Gorenstein, but if we replace $\Hilb^{d}_{X}$ with $\Quot^{d}_{X}$, then we recover the property that $A \colon \Quot^{d}_{\omega} \to \bar{J}_{X}^{-d}$ is smooth with $\bbP^{d-g}$ fibers (\cite[Thm.~8.4]{altman80}), as is suggested by Corollary~\ref{Cor: GenLinSystem}.

With this property of the Abel map, we can now construct the compactified Jacobian of a singular curve from the Quot scheme in the same manner that we constructed the Jacobian of a smooth curve from the symmetric power.  If we fix $d \ge 2g-1$, then linear equivalence partitions $\Quot^{d}_{\omega}$ into equivalence classes that are all isomorphic to $\bbP^{d-g}$.  In fact, linear equivalence is a smooth and projective equivalence relation on $\Quot^{d}_{\omega}$.  We can conclude that the quotient exists as a scheme, and this quotient is the compactified Jacobian $\bar{J}_{X}^{d}$. This argument is roughly the construction of  $\bar{J}^{d}_{X}$ given in \cite[Thm.~8.5]{altman80}.

The Abel map in Definition~\ref{Def: AbelForSing} however  suffers from one deficiency:  it does not quite resolve the indeterminacy of $A \colon X^{(d)} \dashrightarrow \bar{J}^{d}_{X}$.  The Abel map $A_{h} \colon \Hilb^{d}_{X} \to \bar{J}^{-d}_{X}$ sends a closed subscheme $D$ consisting of $d$ general points to the ideal sheaf $I_{D}$, but the composition $\Hilb^{d}_{X} \stackrel{\rho}{\longrightarrow} X^{(d)} \stackrel{A}{\dashrightarrow} \bar{J}^{d}_{X}$ sends $D$ to $\calL(D)$.  

This is not just an issue of convention.  In Example~\ref{Example: WrongDegree}, we saw an example of a generalized divisor $D$ of degree $d$ with the property that the degree of $\calL(D)$ is $d+1$.  Since the degree of a rank $1$, torsion-free sheaf is locally constant in flat families, we can conclude that the rule $D \mapsto \calL(D)$ does not define a morphism $\Hilb^{d}_{X} \to \bar{J}^{d}_{X}$ when $X$ is the curve from the example.

That curve is non-Gorenstein, and there are no problems provided we restrict our attention to Gorenstein curves.  Given $[D] \in \Hilb^{d}_{X}(T)$ (for some $T$), form the sheaf
\begin{displaymath}
	\calL(D) := \ShHom(I_{D}, \calO_{X_T}),
\end{displaymath}
If $X$ is Gorenstein, then for any rank $1$, torsion-free sheaf $I$, $\ShExt^{1}(I, \calO_{X})$ vanishes by \cite[6.1]{herzog71}.  In particular, this group vanishes when $I$ is the restriction of $I_{D}$ to a fiber of $X_{T} \to T$, so we can conclude by  \cite[Thm.~1.10]{altman80} that $\calL(D)$ is $T$-flat and its formation commutes with base-change.  (This is essentially \cite[2.2]{Presentation2}.) We can thus make the following definition.

\begin{definition} 
	Assume $X$ is Gorenstein.  Given a $k$-scheme $T$ and $[D] \in \Hilb^{d}_{X}(T)$,  set $\calL(D):= \ShHom(I_{D}, \calO_{X_{T}})$ where $I_{D}$ is the ideal of $D$.  The \textbf{modified Abel map} $A^{\vee}_{h} \colon \Hilb^{d}_{X} \to \bar{J}^{d}_{X}$ is defined by the rule $D \mapsto \calL(D)$.
\end{definition}
The modified Abel map is well-defined by the preceding discussion.  Furthermore, unlike $A_{h}$, the modified Abel map $A^{\vee}_{h}$ resolves the indeterminacy of $A \colon X^{(d)} \dashrightarrow \bar{J}^{d}_{X}$ in the sense described earlier.

When $X$ is non-Gorenstein, our discussion of generalized divisors from the previous section suggests that we should try to define a modified Abel map out of $\Quot^{d}_{\omega}$ rather than $\Hilb^{d}_{X}$.  We can always do this.  By \cite[2.2]{Presentation2} the sheaf 
\begin{displaymath}
	\calM(D_{\omega}) := \ShHom(I_{D_{\omega}}, \omega_{T})
\end{displaymath}
associated to $D_{\omega} \in \Quot^{d}_{\omega}(T)$ is always $T$-flat and its formation commutes with base-change.  We can therefore make the following definition.
\begin{definition}
	Given a $k$-scheme $T$ and $D_{\omega} \in \Quot^{d}_{\omega}(T)$,  set $\calM(D_{\omega}):= \ShHom(I_{D_{\omega}}, \omega_{T})$.  The \textbf{modified Abel map} $A_{q}^{\ast} \colon \Quot^{d}_{\omega} \to \bar{J}^{d}_{X}$ out of the Quot scheme is defined by the rule
\begin{displaymath}
	D_{\omega} \mapsto \calM(D_{\omega})
\end{displaymath}
\end{definition}
As with the modified Abel map out of the Hilbert scheme, this modification of the Abel map resolves the indeterminacy of $A \colon X^{(d)} \dashrightarrow \bar{J}^{d}_{X}$.

Having constructed Abel maps, we can now use these maps to define and study the theta divisor.  Imitating the construction for smooth curves, we make the following definition.
\begin{definition}
	The \textbf{theta divisor} $\Theta \subset \bar{J}^{g-1}_{X}$ of a curve $X$ is defined to be the image of $A_{q}^{\ast} \colon \Quot^{g-1}_{\omega} \to \bar{J}^{g-1}_{X}$ with the reduced scheme structure.
\end{definition}
We are abusing language in calling $\Theta$ a divisor because it is not known in general whether $\Theta$ is always a divisor.  The subscheme $\Theta$ is known to be a Cartier divisor when the singularities of $X$ are planar.  For such an $X$, Soucaris  \cite{soucaris} and Esteves \cite{esteves97} have proven that $\Theta$ is not only a Cartier divisor, but in fact $\Theta$ is an ample Cartier divisor.  They prove this statement by constructing and studying  $\Theta$ using the formalism of the determinant of cohomology.

Soucaris uses the same formalism to construct a subscheme in $\bar{J}^{g-1}_{X}$ when $X$ is a curve with arbitrary singularities, but then it is not known whether his subscheme coincides with $\Theta$, and it is not known whether his subscheme is a Cartier divisor.  By construction, Soucaris' subscheme is locally defined by a single equation, and Soucaris proves that the subscheme does not  contain an irreducible component of $\bar{J}_{X}^{g-1}$ \cite[Thm.~8]{soucaris}.  (His argument is essentially our Corollary~\ref{Cor: GenLinSystem}.)  This does not quite show that Soucaris' subscheme is a Cartier divisor.  To show that, it is necessary to also  prove that the subscheme does not contain any embedded component of $\bar{J}^{g-1}_{X}$, a result that is unknown.  More generally, it is not known whether $\bar{J}^{g-1}_{X}$ can have embedded components or even whether $\bar{J}^{g-1}_{X}$ can be non-reduced.

It is also not known whether Soucaris' subscheme equals $\Theta$ when $X$ has non-planar singularities.  The two subschemes are supported on the same subset of $\bar{J}^{g-1}_{X}$, so to show equality, it is enough to prove that Soucaris' subscheme is reduced.  

There is a third subscheme of $\bar{J}^{g-1}_{X}$ that we can define: the image of the Abel map $A_{h} \colon \Hilb^{g-1}_{X} \to \bar{J}^{-(g-1)}_{X}$.  This  subscheme can be identified with $\Theta$ when $X$ is Gorenstein, but we will see in Example~\ref{Example: AbelForSingularGenus2}  that the two subschemes can be different in general.

We conclude by describing the Abel map in some low genus examples. With the theory of the Abel map that we have developed, this will be a straightforward application of the results from the end of Section~\ref{Section: GeneralizedDivisors}.  As in Section~\ref{Sect: Jacobian}, we let $C^{1}_{d} \subset \Hilb^{d}_{X}$ denote the subset of effective generalized divisors $D$ with $\dim |D| \ge 1$ and similarly with $C^{1}_{d} \subset \Quot^{d}_{\omega}$.  

\begin{example}[Genus $1$]
If $X$ is  a genus $1$ curve, then the singularities of $X$ are planar and hence Gorenstein.  In Example~\ref{Example: SingGenus1}, we showed that $C^{1}_{g}=\emptyset$, so the Abel map $A_{h}^{\vee} \colon \Hilb^{1}_{X}=X \to \bar{J}^{1}_{X}$ is an isomorphism.  When $X$ is a nodal curve, this recovers the moduli-theoretic interpretation of the compactification of the generalized Jacobian from the beginning of Section~\ref{Sect: CompJac}.  The theta divisor $\Theta = \{ [\calO_{X}] \} \subset \bar{J}^{0}_{X}$ consists of a single point that lies in the smooth locus.
\end{example}

\begin{example}[Genus $2$] \label{Example: AbelForSingularGenus2}
In describing the structure of the Abel map, we need to distinguish between Gorenstein curves and non-Gorenstein curves.  When $X$ is a Gorenstein curve, we showed in Example~\ref{Example: SingGenus2} that $C^{1}_{g}$ is the rational curve $\bbP^{1} = C^{1}_{g} \subset \Hilb^{g}_{X}$ that consists of effective canonical divisors $K$.  The Abel map $A^{\vee}_{h}$ contracts this curve to the point $\{ [\omega] \} \subset J^{g}_{X}$ of the generalized Jacobian.  The locus $C^{1}_{g-1}$ is empty, so the theta divisor $\Theta \subset \bar{J}^{g-1}_{X}$ is an embedded copy of $\Hilb^{g-1}_{X}$, which is just $X$ itself.  Thus the structure of the Abel map is the same as for a smooth genus $2$ curve except the geometry of both the source and the target are more complicated.

What happens when $X$ is non-Gorenstein?  We will only consider the non-Gorenstein curve $X$ with a unique unibranched singularity $p_0 \in X$ from Example~\ref{Example: Dualizing}. For this curve, we can consider both the Abel map $A_{h}$ out of the Hilbert scheme and the Abel map $A_{q}^{\ast}$ out of the Quot scheme.  We begin by describing $A_{h}$.  The locus $C^{1}_{g} \subset \Hilb^{g}_{X}$ is the rational curve that parameterizes elements of $|p+p_0|$ (where $p \ne p_0$).  This curve is contracted by the Abel map $A_{h} \colon \Hilb^{g}_{X} \to \bar{J}^{-g}_{X}$, and away from this curve, $A_{h}$ is an isomorphism.  The locus $C^{1}_{g-1} \subset \Hilb^{1}_{X}$ is empty, so $X=\Hilb^{1}_{X} \to \bar{J}^{-(g-1)}_{X}$ is an embedding.

Let us now turn our attention to the Abel map $A^{\ast}_{q}$ out of the Quot scheme.  This Abel map contracts the rational curve $\bbP^{1} = C_{1}^{g} \subset \Quot^{g}_{\omega}$ parameterizing effective canonical $\omega$-divisors to the point $[\omega] \in \bar{J}^{2}_{X}$ and is an isomorphism away from $C^{1}_{g}$.  The locus $C^{1}_{g-1} \subset \Quot^{g-1}_{\omega}$ is empty, so $A^{\ast}_{q} \colon \Quot^{g-1}_{X} \to \bar{J}^{1}_{X}$ is an embedding.  How does $A_{q}^{\ast}$ compare with $A_{h}$?

In degree $g$, both Abel maps contract a rational curve $\bbP^{1}$, but the contractions are different. The image of $\bbP^{1}$ under $A_{h}$ is $[I_{p+p_0}] \in \bar{J}^{-g}_{X}$ which is a singularity, but  the image of $\bbP^{1}$ under $A_{q}^{\ast}$ is the smooth point  $[\omega]$ of $\bar{J}^{g}_{X}$.

To see that $[I_{p+p_0}]$ is a singularity, we estimate the tangent space dimension.  The tangent space  $\operatorname{T}_{[p+p_0]}(\Hilb^{2}_{X})$  is the sum of the tangent spaces $\operatorname{T}_{p}(X)$ and $\operatorname{T}_{p_0}(X)$, so $\dim \operatorname{T}_{[p+p_0]}(\Hilb^{2}_{X})=4$.  We can conclude that  $\dim \operatorname{T}_{[I_{p+p_0}]}(\bar{J}^{-g}_{X}) \ge 3$ because the  kernel of  
$$
	\operatorname{T}(A_{h}) \colon \operatorname{T}_{[p+p_0]}(\Hilb^{2}_{X}) \to \operatorname{T}_{[I_{p+p_{0}}]}(\bar{J}^{-g}_{X})
$$
is the $1$-dimensional   space 
$$
	\Hom(I_{p+p_0}, \calO_{X})/\Hom(I_{p+p_0}, I_{p+p_0}) = H^{0}(X, \calL(p+p_0))/H^{0}(X, \calO_{X}).
$$
Now the point $[I_{p+p_0}]$ lies in the closure of the line bundle locus, so either $[I_{p+p_0}]$ lies on the  intersection of two irreducible components or the local dimension of $\bar{J}^{-g}_{X}$ at $[I_{p+p_0}]$ is $2<3 \le \dim \operatorname{T}_{[I_{p+p_0}]}(\bar{J}^{-g}_{X})$.  In either case, we can conclude that $[I_{p+p_0}] \in \bar{J}^{-g}_{X}$ is a singularity.  

Tangent space techniques can also be used to prove that $[\omega] \in \bar{J}^{g}_{X}$ is a smooth point, and the reader is directed to the proof of \cite[Thm.~2.7]{kass:exoticcomp}  for the computation.  In \cite{kass:exoticcomp}, the author also  enumerates the irreducible components of $\bar{J}^{-g}_{X}$.  There are two.

The Abel maps in degree $g-1$ are also different.  Both $A_{h}$ and $A_{q}^{\ast}$ are embeddings of curves but of different curves. The morphism $A_{h}$ embeds $\Hilb^{g-1}_{X}$ which is isomorphic to the irreducible curve $X$.  The morphism $A_{q}^{\ast}$, on the other hand, embeds $\Quot^{g-1}_{X}$, and this scheme is not isomorphic to $X$.  As $g-1=1$, the Quot scheme $\Quot^{g-1}_{\omega}$ is isomorphic to $\bbP \omega$, the projectivization of $\omega$. From the presentation of $\omega$ in Example~\ref{Example: Dualizing} of Section~\ref{Section: Dualizing}, we see that $\Quot^{g-1}_{\omega}=\bbP \omega$ is a curve with two irreducible components, one  whose general element corresponds to a quotient supported at the singularity $p_0$ and one  whose general element corresponds to a quotient supported on the smooth locus $X_{\text{sm}}$.  The image $A_{q}^{\ast}(\Quot^{g-1}_{\omega})$ is the theta divisor $\Theta$, so this example shows that $\Theta$ and $A_{h}(\Hilb^{g-1}_{X})$ may not be isomorphic as schemes.  

\end{example}

\begin{example}[Genus $3$]
We only consider Gorenstein curves, and we consider the hyperelliptic curves and the non-hyperelliptic curves separately.  If $X$ is a singular hyperelliptic curve of genus $3$, then our description of complete linear systems on $X$  in Example~\ref{Example: SingGenus3} shows that $C^{1}_{g}$ is isomorphic to $X \times \bbP^1$, and the Abel map $A^{\vee}_{h} \colon C^{1}_{g} \to \bar{J}^{g}_{X}$ collapses the $\bbP^1$ factor.  In particular, the image $A_{h}^{\vee}(C^{1}_{g})$ is a copy of $X$. 

The locus $C^{1}_{g-1} \subset \Hilb^{g-1}_{X}$ is the rational curve that parameterizes effective canonical divisors.  As in the smooth case, this curve is blown down to a point $\{ [\omega] \} \subset \Theta$ that is a singularity by \cite[Prop.~6.1]{yano09}.  The point $[\omega]$ is the only point on the theta divisor with positive dimensional preimage under $A^{\vee}_{h}$, but it is not the only singularity. The theta divisor is also singular at points corresponding to sheaves that fail to be locally free.  This is another consequence of \cite[Prop.~6.1]{yano09}.

What if $X$ is non-hyperelliptic?  The canonical map then embeds $X$ as a degree $4$ plane curve, and we showed in Example~\ref{Example: SingGenus3} that the elements of $C^{1}_{g}$ are effective generalized divisors of degree $3$ that are contained in a line.  Arguing as in the smooth case (Example~\ref{Example: SmGenus3}), we can construct a map $C^{1}_{g} \to X$ with $\bbP^1$-fibers that are collapsed by $A^{\vee}_{h} \colon \Hilb^{g}_{X} \to \bar{J}^{g}_{X}$.  We can conclude that $A^{\vee}_{h}(C^{1}_{g})$ is a copy of $X$.  The locus $C^{1}_{g-1}$ is empty, so $\Theta$ is an embedded copy of $\Hilb^{2}_{X}$.
\end{example}

\begin{example}[Genus $4$]
Here we just describe the degree $g-1$ Abel map associated to the special singular curves that we studied in Example~\ref{Example: SingGenus4}.  Consider first the special non-hyperelliptic curve $X$ that lies on a singular quadric $Q \subset \bbP^{3}$. From our description in Example~\ref{Example: SingGenus4}, we see that $C^{1}_{g-1} \subset \Hilb^{g-1}_{X}$ is the rational curve that parameterizes effective degree $g-1$ generalized divisors that lie on a ruling of the quadric surface $Q \subset \bbP^3$.  This curve is contracted to a point by $A_{h}^{\vee}$.  Recall that every element of $C^{1}_{g-1}$ is a divisor that is not Cartier.  This is a new phenomenon: this is the first example of a point on the theta divisor that is not a line bundle and has positive dimensional preimage under $A^{\vee}_{h}$ .

The theta divisor of a singular hyperelliptic curve of genus $4$ contains similar points.  For such a curve, the work we did in Example~\ref{Example: SingGenus4} shows that $C^{1}_{g-1}$ is isomorphic to $X \times \bbP^1$, and the Abel map $A_{h}^{\vee}$ collapses the second $\bbP^1$ factor.  The image of $A^{\vee}_{h}$ is thus a copy of $X \subset \Theta$, and the singularities of $X$ correspond to points on the theta divisor that are not line bundles and have positive dimensional preimage.

In \cite{yano09}, the author and Casalaina-Martin computed the multiplicity of the theta divisor of a nodal curve at a point. In general, if $X$ is a nodal curve and $[I] \in \Theta$, then the main theorem of that paper asserts
\begin{displaymath}
	\operatorname{mult}_{I} \Theta = 2^{n} \cdot h^{0}(X,I),
\end{displaymath}
where $n$ is number of nodes at which $I$ fails to be locally free.  When $X$ is the special genus $4$ non-hyperelliptic curve and $[I] \in \Theta$ is the image of $C^{1}_{g-1}$, the theorem states that $[I] \in \Theta$ is a multiplicity $4$ point.
\end{example}

\section{Appendix: The dualizing sheaf and coherent duality} \label{Section: Dualizing}
The dualizing sheaf $\omega$ of a curve plays an important role in the study of compactified Jacobians and generalized divisors.  Here we recall the definition and the basic properties of $\omega$.  Two references for this material are \cite{altman70} and \cite[Chap.~III, Sect.~7]{hartshorne77}.

The dualizing sheaf $\omega$ of a curve $X$ is defined as follows.  Given a point $p \in X^{\nu}$ of the normalization of $X$, fix a uniformizer $t$ of $X^{\nu}$ at $p$.  We define the \textbf{residue} $\operatorname{Res}_{p}(\eta)$ of a rational $1$-form $\eta$ at $p$ as follows.  In the local ring $\calO_{X^{\nu}, p}$, we can write
$\eta$ as
\begin{displaymath}
	\eta = (b(t) + a_{-1} t^{-1} + a_{-2} t^{-2} + \dots) dt
\end{displaymath}
for $b(t) \in \calO_{X^{\nu}, p}$ and $a_{n} \in k$.
We define
\begin{displaymath}
	\operatorname{Res}_{p}(\eta) := a_{-1}.
\end{displaymath}
The functional $\operatorname{Res}_{p}$ is independent of the choice of $t$ (though the proof is non-trivial when $\operatorname{char}(k)>0$).  The dualizing sheaf can be defined in terms of residues.
\begin{definition} \label{Definition: DualizingMod}
	The \textbf{dualizing sheaf} $\omega$ of the curve $X$ is defined to be the sheaf whose sections $\eta \in H^{0}(U, \omega)$ over an open subset $U \subset X$ are rational $1$-forms $\eta$ with the property that
	\begin{displaymath}
		\sum_{\nu(p)=q} \operatorname{Res}_{p}(f \eta) = 0
	\end{displaymath}
	for all regular functions $ f \in H^{0}(U, \calO_{X})$ and all points $q \in U$.
\end{definition}

The dualizing module admits a distinguished functional $t \colon H^{1}(X, \omega) \to k$ (whose definition we omit) that induces an isomorphism 
\begin{displaymath}
	\Ext^{n}(F, \omega) \cong H^{1-n}(X, F)^{\vee}
\end{displaymath}
for every coherent sheaf $F$ and every integer $n$.  This statement is the Coherent Duality Theorem (see \cite[Chap.~IV, Sect.~5]{altman70} or \cite[Thm.~7.6]{hartshorne77}).    By general formalism, the pair $(\omega, t)$ is unique up to a unique isomorphism.

We can say even more when $F$ is a rank $1$, torsion-free sheaf.  If $F=I$ is a rank $1$, torsion-free sheaf, then the natural map $H^{n}(X, \ShHom(I, \omega)) \to \Ext^{n}( I, \omega)$ is an isomorphism for all $n$ because the higher cohomology sheaves $\ShExt^{n}(I, \omega)$, $n>0$, vanish  \cite[6.1]{herzog71}.  In particular, if $I=\calM(D_{\omega})$ is the rank $1$, torsion-free sheaf associated to a generalized $\omega$-divisor, then coherent duality takes the form 
\begin{equation} \label{OmegaSerreDuality}
	H^{n}(X, \calM(\operatorname{adj} D_{\omega}))  \cong H^{1-n}(X, \calM(D_{\omega}))^{\vee}
\end{equation}
Similarly, if $X$ is Gorenstein (so the adjoint of a generalized divisor exists), then for every generalized divisor $D$, we have
\begin{equation} \label{SerreDuality}
	H^{n}(X, \calL(\operatorname{adj} D)) \cong H^{1-n}(X, \calL(D))^{\vee}.
\end{equation}

We conclude this section by computing the dualizing sheaf of a specific non-Gorenstein curve.
\begin{example} \label{Example: Dualizing}
	Define $X$ to be the curve constructed by gluing the affine curves 
	\begin{align*}
	X_1 :=& \Spec(k[t^3,t^4,t^5]), \\ 
	X_2 :=& \Spec(k[s])
\end{align*}
by the isomorphism $t = s^{-1}$.  This is a rational curve of genus $2$ with a unique singularity that is non-Gorenstein and unibranched.

Using Definition~\ref{Definition: DualizingMod}, we see that the dualizing sheaf $\omega$
is the subsheaf of the sheaf $\omega_{\calK}$ of rational $1$-forms generated by
\begin{gather} \label{Eqn: DualizingExample}
	dt/t^3, dt/t^2 \text{ on $X_1$}; \\
	ds \text{ on $X_2$.}\notag
\end{gather}
\end{example}

\section{Acknowledgments}
The author thanks the anonymous referee and Maxim Arap for helpful comments on earlier drafts of this article.  The author also thanks  David Rydh for a helpful email exchange about the Hilbert--Chow morphism.


\bibliographystyle{amsalpha}
\bibliography{bibl}
\end{document}